\newtheorem{theorem}{Theorem}[section]
\newtheorem{lemma}[theorem]{Lemma}
\newtheorem{corollary}[theorem]{Corollary}
\newtheorem{proposition}[theorem]{Proposition}
\theoremstyle{definition}
\newtheorem{remark}[theorem]{Remark}
\newtheorem{definition}[theorem]{Definition}
\theoremstyle{remark}
\newtheorem{notation}{Notation}
\renewcommand{\eqref}[1]{(\ref{#1})}
\renewcommand{\bigskip}{\vspace{0.2cm}}
\begin{document}

\title{Elementary computation of the stable reduction of the Drinfeld modular curve $X(\pi^2)$}

\maketitle

\begin{center}
{\sc Takahiro Tsushima}\\
{\it
Graduate School of Mathematical Sciences,
The University of Tokyo, 3-8-1 Komaba, Meguro-ku
Tokyo 153-8914, JAPAN}
\\
E-mail: tsushima@ms.u-tokyo.ac.jp
\end{center}
\maketitle
\begin{abstract}
In \cite{W3}[Theorem 1.1], Jared Weinstein proves that, in the stable reduction 
of the Lubin-Tate space $\mathcal{X}(\pi^n),$
 all irreducible components admit a purely 
inseparable map to
one of the following four curves; 
the projective line, 
the curve with the Artin-Schreier affine model $a^q-a=t^2,$
 the Deligne-Lusztig curve with affine model $x^qy-xy^q=1$
and the curve with affine model $a^q-a=t^{q+1}.$ 
To prove this, he uses non-abelian Lubin-Tate theory for ${\rm GL}_2$ 
in \cite{W3}[Theorem 3.3]
 and the Bushnell-Kutzko type theory.
In this paper, we precisely determine defining equations  
 of all irreducible components in the stable reduction of the
 Drinfeld modular curve of level $2.$
 Our method is purely local, explicit and elementary with using blow-up.
 As a corollary, we determine the inertia action and ${\rm GL}_2$-action
 on each components
 in the stable reduction of $\mathcal{X}(\pi^2)$ explicitly.
\end{abstract}

\section{Introduction}
\noindent
Let $F$ be a non-archimedean local field with uniformizer $\pi.$
Let $\mathbf{C}$ be the completion of a fixed algebraic closure of $F.$ 
By a model for a scheme $X$ over $F,$
we mean a scheme $\mathcal{X}$ over the ring of integers
$\mathcal{O}_F$ of $F$ such that $X \simeq \mathcal{X} \otimes_{\mathcal{O}_F} F.$ 
When a curve $C$ over $F$
does not have a model with good reduction
over $\mathcal{O}_F,$
it may have the ``next best thing,"
i.e., {\it a stable model}.
The stable model is unique up to isomorphism if it exists,
and it does over the ring of integers in
some finite extension of $F,$
as long as the genus of the curve is at least $2,$
which is proved by Deligne and Mumford in \cite{DM}.
Moreover, if $\mathcal{C}$
is a stable model for $C$
over $\mathcal{O}_F,$
and $F \subset E \subset \mathbf{C},$
then $\mathcal{C} \otimes_{\mathcal{O}_F} \mathcal{O}_E$
is a stable model for $C \otimes_F E$ over
$\mathcal{O}_E.$
The special fiber of any stable model for $C$
is called the stable reduction.

The stable models of $X_0(p)$ and
$X_0(p^2)$ were previously known,
due to works of J-I.\ Igusa and Deligne-Rapoport \cite[Section 7.6]{DR},
 and B.\ Edixhoven \cite[Theorem 2.1.2]{E} or \cite{E2} respectively. 
In \cite{CM}, R.\ Coleman and K.\ McMurdy calculated 
the stable reduction of $X_0(p^3),$
using the notion of stable coverings
of a rigid-analytic curve by basic wide opens.
In loc.\ cit., they use the Woods Hole theory in \cite{WH} 
and the Gross-Hopkins theory in \cite{GH}
 to deduce the defining equations of all irreducible components in the stable 
 reduction of $X_0(p^3).$ 
 They also determine 
 the stable reduction of $X_0(Np^3)$ with $(N,p)=1$
and compute the inertia action on the stable model
 of $X_0(p^3)$ in \cite{CM2}. 
See \cite[Introduction]{CM}
for other prior results regarding the stable models
of modular curves at prime power levels.
In \cite{T}, we compute the stable reduction of the modular curve $X_0(p^4)$
on the basis of the Coleman-McMurdy's work.
Similarly as \cite{CM}, we actually construct a stable covering
of $X_0(p^4)$ by basic wide opens.
To compute the reduction of irreducible components in
$X_0(p^4),$ we use the Kronecker polynomial.

Let $\mathcal{X}(\pi^n)$
denote the Lubin-Tate space.
This space is a rigid-analytic deformation space
with Drinfeld $\pi^n$-level structure of a one-dimensional formal 
$\mathcal{O}_F$-module
of height $h$
over the residue field $\mathbb{F}_q$
of $F.$
Using the type theory of Bushnell-Kutzko and Deligne-Carayol's 
non-abelian Lubin-Tate theory
 for ${\rm GL}_2$ in \cite{W3}[Theorem 3.3], 
J.\ Weinstein determines the stable model of $\mathcal{X}({\pi}^n)\ (h=2),$ 
{\it up to purely inseparable map} in \cite{W3}[Theorem 1.1].
More precisely, he proves that all irreducible components
admit a purely inseparable map to one of the following
four curves;
\\1.\ the projective line $\mathbb{P}^1$
\\2.\ the curve with Artin-Schreier affine model $a^q-a=s^2$
\\3.\ Deligne-Lusztig curve with affine model 
$x^qy-xy^q=1$
\\4.\  the curve with affine model $a^q-a=t^{q+1}.$

In this paper, we calculate precisely defining equations
of all irreducible components in
the stable reduction of the Lubin-Tate space $\mathcal{X}(\pi^2)\ ({\rm char}\ F=p>0)$ 
 on the basis of the ideas in \cite{CM} and in \cite{T}.
 Similarly as \cite{T}, our method in this paper
 is purely local, very explicit and elementary with using {\it blow-up.}
Techniques and ideas in loc.\ cit.\ can be applied
to the Lubin-Tate space $\mathcal{X}(\pi^2).$
The $\pi$-multiplication 
of the universal formal 
$\mathcal{O}_F$-module $\mathcal{F}^{\rm univ}$ over an open unit ball $\mathcal{X}(1)$ 
has the following simple form, if we choose 
an isomorphism
$\mathcal{X}(1) \simeq B(1) \ni u$ appropriately,
$$[\pi]_{\mathcal{F}^{\rm univ}}(X)=X^{q^2}+uX^q+\pi X.$$
This fact is crucial for our computation.
By using this equation, we can directly compute the reduction of all irreducible
components in the stable reduction of the Lubin-Tate
space $\mathcal{X}(\pi^2).$
We define several rigid analytic subspaces of $\mathcal{X}(\pi^2),$
which we denote by $\mathbf{Y}_{2,2},\mathbf{Y}_{3,1}$ and $\mathbf{Z}_{1,1}.$
We compute the reduction of these spaces. 
Irreducible components in the stable reduction of the Lubin-Tate space
$\mathcal{X}(\pi^2)$
consist of the reduction of these spaces.
We briefly introduce definitions of these spaces.
See subsection 3.1 for the precise definition. See notation below
for the rigid analytic notations.
Let $p_2:\mathcal{X}(\pi^2) \longrightarrow
\mathcal{X}(1)$ be the natural forgetful map.
We set as follows
$$\mathbf{Y}_{2,2}:=p_2^{-1}(B[p^{-\frac{q}{q+1}}]),\mathbf{Y}_{3,1}:=p_2^{-1}(C[p^{-\frac{1}{q+1}}]),
\mathbf{Z}_{1,1}:=p_2^{-1}(C[p^{-\frac{1}{2}}]).$$

Let $\mathcal{X}_{\rm LT}(\pi^n)$
be the zero-dimensional Lubin-Tate space 
and ${\rm LT}$ the universal formal group of height
$1$ over $\mathcal{X}_{\rm LT}(1).$ Let $F_0$
be the completion of the maximal unramified extension of $F$
in a fixed algebraic closure.
Furthermore, let $F_n=F_0({\rm LT}[\pi^n])$
denote the classical Lubin-Tate extension. Then, it is well-known that
the base change $\mathcal{X}(\pi^n)\times_F F_n$
has $q^{n-1}(q-1)$ connected components.
Hence, we write
$\mathcal{X}(\pi^n)\times_{F}
F_n=
\coprod_{\pi_n \in \mathcal{X}_{\rm LT}(\pi^n)(F_n)}\mathcal{X}^{\pi_n}(\pi^n).$
The connected component $\mathcal{X}^{\pi_n}(\pi^n)$
is defined by the following equation
$$\mu_n(X_n,Y_n)=\pi_n$$
where $\mu_n$ is called the Moore
determinant. See subsection 2.4 for more detail.
We set $\mathbf{Y}^{\pi_n}_{a,b}:=\mathbf{Y}_{a,b} \cap \mathcal{X}^{\pi_n}(\pi^n)$
and
$\mathbf{Z}^{\pi_n}_{a,b}:=\mathbf{Z}_{a,b} \cap \mathcal{X}^{\pi_n}(\pi^n).$

In the following, we explain defining equations of the reduction of these spaces
$\mathbf{Y}^{\pi_2}_{2,2}, \mathbf{Y}^{\pi_2}_{3,1}$ and $\mathbf{Z}^{\pi_2}_{1,1}.$
The reduction of $\mathbf{Y}^{\pi_2}_{2,2}$
is defined by the following equations
$$x^qy-xy^q=1,Z^q=x^{q^3}y-xy^{q^3}.$$
This affine curve has $q(q^2-1)$ singular points at
$(x,y)$ with
$x=\zeta y,\zeta \in \mathbb{F}^{\times}_{q^2} \backslash \mathbb{F}^{\times}_q$
and $y^{q+1}=\frac{1}{\zeta^q-\zeta}.$
We analyze the residue classes of these singular points.
Then, by blowing up the singular points, we find 
$q(q^2-1)$ irreducible components defined by $a^q-a=t^{q+1}.$
Similar phenomenon is observed 
in the stable reduction of the modular curve $X_0(p^4)$ in \cite[Section 4]{T}.

The reduction of the space $\mathbf{Y}^{\pi_2}_{3,1}$ has
$(q+1)$ connected components and each component is defined by $x^qy-xy^q=1,w^{q^2}=y.$
Let $\overline{\mathbf{Y}}^{\pi_2}_{3,1,\zeta}$
denote a connected component of $\overline{\mathbf{Y}}^{\pi_2}_{3,1}.$

The reduction of $\mathbf{Z}^{\pi_2}_{1,1}$
has $(q+1)$ connected components and each component 
is defined by the following
equation
$$Z^q=X^{q^2-1}+\frac{1}{X^{q^2-1}}$$
with genus $0.$
Let $\overline{\mathbf{Z}}^{\pi_2}_{1,1,\zeta}$
denote a connected component of the reduction $\overline{\mathbf{Z}}^{\pi_2}_{1,1}.$
This affine curve has $2(q^2-1)$
singular points at $X=\zeta,\zeta \in \mu_{2(q^2-1)}.$
By analyzing the residue classes of these singular points,
we find
$2(q^2-1)$ irreducible components defined by $a^q-a=s^2.$
Similar phenomenon is observed 
in the stable reduction of the modular curve $X_0(p^3)$ in \cite{CM}.
We also compute the inertia action 
and ${\rm GL}_2$-action
on the reduction of 
the spaces $\mathbf{Y}^{\pi_2}_{2,2},\mathbf{Y}^{\pi_2}_{3,1}$ and $\mathbf{Z}^{\pi_2}_{1,1}$
explicitly.
These computations of the defining equations of all irreducible components
in the stable reduction of $\mathcal{X}(\pi^2)$
are done in Section 4.

In section 5, we analyze the whole spaces $\mathcal{X}(\pi)$
and $\mathcal{X}(\pi^2).$
More precisely, we prove that
the complements $\mathcal{X}(\pi) \backslash \mathbf{Y}_{1,1}$ and
$\mathcal{X}(\pi^2) \backslash (\mathbf{Z}_{1,1} \cup \mathbf{Y}_{2,2} \cup 
\mathbf{Y}_{3,1})$
are disjoint unions of annuli.
Hence, we conclude that the wide open space $\mathcal{X}(\pi)$
is a basic wide open space.
On the other hand, the space $\mathcal{X}(\pi^2)$
is {\it not} basic wide open.
In subsection 5.3, we actually construct a stable covering of the wide open space
$\mathcal{X}(\pi^2)$ on the basis of the idea of Coleman-McMurdy 
\cite[Section 9]{CM}.
Furthermore, we give intersection multiplicities
in the stable reduction of the Lubin-Tate space $\mathcal{X}(\pi^2)$
in subsection 5.4.

We explain a shape of the stable reduction of the Lubin-Tate space 
$\mathcal{X}(\pi^2).$
Let $\overline{\mathbf{Y}}^c_{2,2}$ 
be the projective completion of the affine curve
 $\overline{\mathbf{Y}}^{\pi_2}_{2,2}.$
 Then, the complement 
 $\overline{\mathbf{Y}}^c_{2,2} \backslash 
 \overline{\mathbf{Y}}^{\pi_2}_{2,2}$
 consist of $(q+1)$ closed points.
The projective curve $\overline{\mathbf{Y}}^c_{2,2}$
meets the projective completion 
$\{\overline{\mathbf{Z}}^c_{1,1,\zeta}\}$
of $(q+1)$ 
affine curves $\{\overline{\mathbf{Z}}^{\pi_2}_{1,1,\zeta}\}$
at each infinity.
The complement $\overline{\mathbf{Z}}^c_{1,1,\zeta}
\backslash \overline{\mathbf{Z}}^{\pi_2}_{1,1,\zeta}$
consists of two closed points.
The projective curve
$\overline{\mathbf{Z}}^c_{1,1,\zeta}$
meets the projective completion 
$\overline{\mathbf{Y}}^c_{3,1,\zeta}$
of $(q+1)$ affine curves $\overline{\mathbf{Y}}_{3,1,\zeta}$
at each infinity.
The curve $\overline{\mathbf{Y}}^c_{3,1,\zeta}$
meets the Igusa curve ${\rm Ig}(p^2)$ at each infinity.
Since the affine curve $\overline{\mathbf{Y}}^{\pi_2}_{3,1,\zeta}$
has $(q+1)$ infinity points,
there exist $q(q+1)$ Igusa curves ${\rm Ig}(p^2)$
in the stable reduction of the Lubin-Tate space $\mathcal{X}^{\pi_2}(\pi^2).$
 
We are greatly inspired by the works of Coleman-McCallum in \cite{CW} on the stable reduction
of the quotient of the Fermat curve and of Coleman-McMurdy on the stable reduction of the modular
 curve $X_0(p^3)$ in \cite{CM}.
 We are also inspired by the work of T.\ Yoshida \cite{Y} and J.\ Weinstein \cite{W}-\cite{W3}. 
We would like to thank Professor T.\ Saito  and A.\ Abbes for helpful comments
on this work and encouragements.
We would like to thank S.\ Yasuda  and
S.\ Kondo for their interest on our work and stimulating
discussions.
\begin{notation}
Let $\pi$ be a uniformizer of $F.$ 
We fix some $\pi$-adic notation.
We let $\mathbf{C}$
be the completion of a fixed algebraic closure of $F,$
with integer ring $\mathbf{R}$
and with $\mathfrak{m}_{\mathbf{R}}$ 
the maximal ideal of $\mathbf{R}.$
Let $v$ denote the unique valuation
on $\mathbf{C}$ with $v(p)=1,$
$|\cdot|$
the absolute value given by $|x|=p^{-v(x)}$
and $\mathcal{R}=|\mathbb{C}_p^\ast|=p^{\mathbb{Q}}.$
Throughout the paper, we let $F$
be a complete subfield of $\mathbf{C}$
with ring of integers $R_F$ and residue field $\mathbb{F}_F.$
For $r \in \mathcal{R},$
we let $B_F[r]$ and $B_F(r)$
denote the closed and open disk over $F$
of radius $r$
around $0,$ i.e.\
the rigid spaces over $F$
whose $\mathbf{C}$-valued points
are $\{x \in \mathbf{C}:|x| \leq r\}$ and $\{x \in \mathbf{C}:|x|<r\}$
respectively.
If $r, s \in \mathcal{R}$ and $r \leq s,$
let $A_F[r,s]$ and $A_F(r,s)$ be the rigid spaces over $F$
whose $\mathbf{C}$-valued points are 
$\{x \in \mathbf{C}:r\leq |x| \leq s\}$ and 
$\{x \in \mathbf{C}:r<|x|<s\},$
which we call  closed annuli and open annuli.
By the width of such an annulus,
we mean ${\rm log}_p(s/r).$
A closed annuli of width $0$
will be called a circle, which we will also denote the circle, $A_F[s,s],$
by $C_F[s].$

\end{notation}
\section{Preliminaries\ (\cite{W2} and \cite{CM2})}
\subsection{definition of formal modules}
We begin with the definitions of formal $\mathcal{O}_F$-modules.
\begin{definition}
Let $R$ be a commutative $\mathcal{O}_F$-algebra, with structure map $i:\mathcal{O}_F
\longrightarrow R.$
A formal one-dimensional $\mathcal{O}_F$-module $\mathcal{F}$
is a power series $\mathcal{F}(X,Y)=X+Y+\cdots \in R[[X,Y]]$
which is commmutative, associative, admits $0$ as an identitiy,
together with a power series $[a]_{\mathcal{F}}(X) \in R[[X]]$
for each $a \in \mathcal{O}_F$ satisfying $[a]_{\mathcal{F}}(X) \equiv i(a) X\ {\rm mod}\ X^2$
and $\mathcal{F}([a]_{\mathcal{F}}(X),[a]_{\mathcal{F}}(Y))=[a]_{\mathcal{F}}(\mathcal{F}(X,Y)).$
\end{definition}

The addition law on a formal $\mathcal{O}_F$-module $\mathcal{F}$
will usually be written $X+_{\mathcal{F}}Y.$
If $R$ is a $k$-algebra, we either have $[\pi]_{\mathcal{F}}(X)=0$
or else $[\pi]_{\mathcal{F}}(X)=f(X^{q^h})$
for some power series $f(X)$
with $f'(0) \neq 0.$
In the latter case, we say $\mathcal{F}$
has height $h$ over $R.$
Let $\Sigma$
be a one-dimensional formal $\mathcal{O}_F$-module
over $\bar{k}$ of height $h.$
The functor of deformations of $\Sigma$ 
to complete local Noetherian $\hat{\mathcal{O}}^{\rm ur}_F$-algebra
is reresentable by a universal deformation
$\mathcal{F}^{\rm univ}$
over an algebra $\mathcal{A}$
which is isomorphic to the power series ring $\hat{\mathcal{O}}^{\rm ur}_F[[u_1,..,u_{h-1}]]$
in $(h-1)$ variables, cf \cite{Dr}.
That is , if $A$ is a complete local 
$\hat{\mathcal{O}}^{\rm ur}_F$-algebra with maximal ideal $P,$
then, the isomorphism classes of deformations of $\Sigma$ to $A$
are given exactly by specializing each $u_i$
to an element of $P$ in $\mathcal{F}^{\rm univ}.$

\subsection{The universal deformation in the equal characteristic case}

Assume ${\rm char}F=p>0,$ so that $F=k((\pi))$
is the field of Laurent series over $k$
in one variable,
with $\mathcal{O}_F=k[[\pi]].$
then, a model for $\Sigma$
is given by the simple rules
$$X+_{\Sigma}Y=X+Y,[\zeta]_{\Sigma}(X)=\zeta X, \zeta \in k, [\pi]_{\Sigma}(X)=X^{q^h}.$$

The universal deformation of $\mathcal{F}^{\rm univ}$
also has a simple model over $\mathcal{A} \simeq 
\hat{\mathcal{O}}^{\rm ur}_F[[u_1,..,u_{h-1}]]:$
$$X+_{\mathcal{F}^{\rm univ}}Y=X+Y$$
$$[\zeta]_{\mathcal{F}^{\rm univ}}(X)=\zeta X, \zeta \in k$$
\begin{equation}\label{rp1}
[\pi]_{\mathcal{F}^{\rm univ}}(X)=\pi X+u_1 X^q+\cdots+u_{h-1}X^{q^{h-1}}+X^{q^h}.
\end{equation}

\subsection{Moduli of deformations with level structure}
Let $A$ be a complete local $\mathcal{O}_F$
with maximal ideal $M,$ and let $\mathcal{F}$
be a one-dimensional $\mathcal{O}_F$-module over $A,$ and let $h>1$
be the height of $\mathcal{F} \otimes A/M.$
\begin{definition}
Let $n>1.$
A Drinfeld level $\pi^n$-structure
on $\mathcal{F}$ is an $\mathcal{O}_F$-module homomorphism
$$\phi:(\pi^{-n}\mathcal{O}_F/\mathcal{O}_F)^h \longrightarrow M$$
for which
the relation
$$\prod_{z \in (\pi^{-n}\mathcal{O}_F/\mathcal{O}_F)^h}(X-\phi(x))\ |\ [\pi]_{\mathcal{F}}(X)$$
holds in $A[[X]].$
If $\phi$ is a Drinfeld level $\pi^n$-structure, the image of $\phi$
of the stabdard basis
elements $(\pi^{-n},0,..,0),\dots,(0,0,..,\pi^{-n})$
of $(\pi^{-n}\mathcal{O}_F/\mathcal{O}_F)^h$
form a Drinfeld basis of $\mathcal{F}[\pi^n].$
\end{definition}

Fix a formal $\mathcal{O}_F$-module $\Sigma$
of height $h$ over $k.$
Let $A$ be a noetherian local $\hat{\mathcal{O}}^{\rm ur}_F$-algebra
such that the structure morphism $\hat{\mathcal{O}}^{\rm ur}_F \longrightarrow A$
induces an isomorphism between residue fields.
A deformation of $\Sigma$
with level $\pi^n$-structure 
over $A$ is a triple $(\mathcal{F},\eta,\phi)$
where $\eta:\mathcal{F} \otimes k \simeq \Sigma$
is an isomorphims of $\mathcal{O}_F$-modules over $k$
and $\phi$ is a Drinfeld level $\pi^n$-structure
on $F.$
\begin{proposition}(\cite{Dr})
The functor which assigns to each $A$
as above the set of deformations of $\Sigma$
with Drinfeld level $\pi^n$-structure over $A$
is represented by a regular local ring $\mathcal{A}(\pi^n)$
of dimension $h-1$ over $\hat{\mathcal{O}}^{\rm ur}_F.$
Let $X_1^{(n)},..X_{h}^{(n)}$
be the corresponding
Drinfeld basis for $\mathcal{F}^{\rm univ}[\pi^n].$
Then, these elements form a set of regular parameters for $\mathcal{A}(\pi^n).$
\end{proposition}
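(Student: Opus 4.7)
The plan is to prove this by induction on $n$, following Drinfeld's original argument in \cite{Dr}. The starting point is the universal deformation ring without level structure, $\mathcal{A} \simeq \hat{\mathcal{O}}^{\rm ur}_F[[u_1,\dots,u_{h-1}]]$, which is regular of Krull dimension $h$. The main tool is the Weierstrass preparation theorem applied to $[\pi]_{\mathcal{F}^{\rm univ}}(X)$: since this series reduces modulo the maximal ideal of $\mathcal{A}$ to $X^{q^h}$, one writes $[\pi]_{\mathcal{F}^{\rm univ}}(X) = u(X) f(X)$ with $u \in \mathcal{A}[[X]]^{\times}$ and $f \in \mathcal{A}[X]$ a distinguished polynomial of degree $q^h$, so the scheme-theoretic $\pi$-torsion $\mathcal{F}^{\rm univ}[\pi]$ is finite flat of rank $q^h$ over $\mathcal{A}$. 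In the equal-characteristic model \eqref{rp1} relevant to this paper, $f$ is already equal to $[\pi]_{\mathcal{F}^{\rm univ}}$ and this step is automatic.

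For the base case $n = 1$, construct $\mathcal{A}(\pi)$ as an iterated finite flat extension by successively adjoining a Drinfeld basis: set $\mathcal{A}_0 = \mathcal{A}$, and, given partial data $X_1^{(1)},\dots,X_i^{(1)}$ in $\mathcal{A}_i$, define $f_{i+1}(X) \in \mathcal{A}_i[X]$ to be the quotient of $f(X)$ by $\prod_{\mathbf{a} \in (\mathcal{O}_F/\pi)^i}(X - \phi_i(\mathbf{a}))$, where $\phi_i$ is the partial $\mathcal{O}_F$-module homomorphism $(\pi^{-1}\mathcal{O}_F/\mathcal{O}_F)^i \to \mathcal{A}_i$ sending the standard basis to $X_1^{(1)},\dots,X_i^{(1)}$; then set $\mathcal{A}_{i+1} = \mathcal{A}_i[X]/(f_{i+1}(X))$. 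The critical technical step, which is Drinfeld's divisibility lemma, is to verify that each $f_{i+1}$ is a well-defined distinguished polynomial of degree $q^{h-i}$. Granting this, every stage is finite free, $\mathcal{A}(\pi) := \mathcal{A}_h$ represents the Drinfeld level $\pi$-structure functor, and the total rank over $\mathcal{A}$ is $\prod_{i=0}^{h-1} q^{h-i} = q^{h(h+1)/2}$. To deduce regularity together with the stated parameter system, expand both sides of $[\pi]_{\mathcal{F}^{\rm univ}}(X) = u(X) \prod_{\mathbf{a} \in (\mathcal{O}_F/\pi)^h}(X - \phi_h(\mathbf{a}))$ in $\mathcal{A}(\pi)[[X]]$ and match coefficients: the coefficient of $X$ expresses $\pi$, up to a unit, as a product of the nonzero $\phi_h(\mathbf{a})$, each of which lies in $I := (X_1^{(1)},\dots,X_h^{(1)})$, and matching the coefficients of $X^{q^k}$ for $k = 1,\dots,h-1$ similarly expresses each $u_k$ as an element of $I$. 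Hence the maximal ideal of $\mathcal{A}(\pi)$ is generated by $X_1^{(1)},\dots,X_h^{(1)}$, and since the finite extension $\mathcal{A} \to \mathcal{A}(\pi)$ preserves Krull dimension, $\mathcal{A}(\pi)$ is regular of dimension $h$ with these as a regular parameter system.

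For the inductive step $n > 1$, construct $\mathcal{A}(\pi^n)$ over $\mathcal{A}(\pi^{n-1})$ by adjoining solutions $X_i^{(n)}$ to $[\pi]_{\mathcal{F}^{\rm univ}}(X_i^{(n)}) = X_i^{(n-1)}$ for each $i = 1,\dots,h$. The same Weierstrass analysis shows each such equation is Eisenstein of degree $q^h$ relative to $\mathcal{A}(\pi^{n-1})$, and the analogous coefficient comparison yields $X_i^{(n-1)} \in (X_1^{(n)},\dots,X_h^{(n)})$, so regularity and the regular-parameter assertion propagate. The main obstacle throughout is the divisibility lemma at the base case: one must verify that the partial products of Drinfeld basis images really do divide $f(X)$ in $\mathcal{A}_i[X]$ with distinguished quotients of the correct degree. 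Because the $\pi$-torsion roots are non-reduced in positive characteristic, naive polynomial division over a field does not apply; the correct argument invokes the formal group law and the symmetry of the full set of roots in an essential way.
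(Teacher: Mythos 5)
The paper offers no proof of this proposition --- it is quoted directly from Drinfeld \cite{Dr} --- so your sketch can only be measured against the standard argument, which is indeed the route you take: build $\mathcal{A}(\pi)$ as an iterated finite flat extension of $\mathcal{A}\simeq\hat{\mathcal{O}}^{\rm ur}_F[[u_1,\dots,u_{h-1}]]$ adjoining a Drinfeld basis, climb the tower by adjoining $\pi$-division points, and prove regularity by comparing coefficients in $[\pi]_{\mathcal{F}^{\rm univ}}(X)=u(X)\prod_{\mathbf{a}}(X-\phi(\mathbf{a}))$ so that $\pi$ and each $u_k$ land in the ideal $(X_1^{(n)},\dots,X_h^{(n)})$, which therefore equals the maximal ideal; since a finite extension preserves Krull dimension, regularity and the regular parameter system follow. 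That closing argument is correct and is the heart of the matter. There is, however, a concrete arithmetic error in the construction itself: $f(X)$ has degree $q^h$ and the partial product $\prod_{\mathbf{a}\in(\mathcal{O}_F/\pi)^i}(X-\phi_i(\mathbf{a}))$ has degree $q^i$, so the quotient $f_{i+1}$ has degree $q^h-q^i$, not $q^{h-i}$ --- you have divided the degrees where you should have subtracted them. Consequently the rank of $\mathcal{A}(\pi)$ over $\mathcal{A}$ is $\prod_{i=0}^{h-1}(q^h-q^i)=|{\rm GL}_h(\mathbb{F}_q)|$, not $q^{h(h+1)/2}$; this is also what is forced by the paper's assertion that $\mathcal{X}(\pi^n)\rightarrow\mathcal{X}(1)$ is Galois with group ${\rm GL}_h(\mathcal{O}_F/\pi^n\mathcal{O}_F)$ on generic fibers.

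The more substantive issue is that the two places where real content is required are named but not argued. At the base case, the existence of $f_{i+1}$ as a distinguished polynomial is exactly Drinfeld's lemma on full sets of sections (the Katz--Mazur divisibility of $[\pi]_u(X)$ by the product over the span of a partial Drinfeld basis); you correctly identify it as the main obstacle but supply no proof, and since $\mathcal{F}^{\rm univ}[\pi]$ is non-reduced over the supersingular locus this cannot be reduced to counting roots. In the inductive step there is a second, unacknowledged gap of the same nature: adjoining roots of $[\pi]_u(X)=X_i^{(n-1)}$ produces a homomorphism $\phi:(\pi^{-n}\mathcal{O}_F/\mathcal{O}_F)^h\rightarrow\mathfrak{m}$ extending a level $\pi^{n-1}$ structure, but one must still verify that the product of $(X-\phi(x))$ over all $q^{nh}$ elements divides $[\pi^n]_u(X)$, i.e.\ that a compatible system of $\pi$-division points of a Drinfeld basis is again a Drinfeld basis; this rests on the same divisibility lemma and is not automatic from the definition. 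With those two lemmas supplied (and the degrees corrected), your coefficient-matching argument --- together with the trivial observation that $X_i^{(n-1)}=[\pi]_u(X_i^{(n)})\in(X_i^{(n)})$ for $n>1$ --- does complete the proof.
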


There is a finite injection of $\hat{\mathcal{O}}^{\rm ur}_F$-algebras $[\pi]_u:\mathcal{A}(\pi^n)
\hookrightarrow \mathcal{A}(\pi^{n+1})$
corresponding to the obvious degeneration map of functors.
We therefore may consider $\mathcal{A}(\pi^{n})$
as a subalgebra of $\mathcal{A}(\pi^{n+1}),$
with the equation
$[\pi]_u(X_i^{(n)})=X_i^{(n+1)}$
holding in $\mathcal{A}(\pi^{n+1}).$
Let $X(\pi^n)={\rm Spf}\mathcal{A}(\pi^n),$
so that $X(\pi^n) \longrightarrow {\rm Spf}\hat{\mathcal{O}}^{\rm ur}_F$
is formally smooth of relative dimension
$h-1.$ Let $\mathcal{X}(\pi^n)$
be the generic fiber of $X(\pi^n);$
then $\mathcal{X}(\pi^n)$
is a rigid analytic variety.
The coordinates $X_i^{(n)}$
are then analytic functions on $\mathcal{X}(\pi^n)$
with values in the open unit disc.
We have that $\mathcal{X}(1)$
is the rigid analytic open unit polydisc of dimension $h-1.$
The group 
${\rm GL}_h(\mathcal{O}_F/\pi^n\mathcal{O}_F)$
acts on the right on $\mathcal{X}(\pi^n)$
and on the left on $A(\pi^n).$
The degeneration map $\mathcal{X}(\pi^n) \longrightarrow \mathcal{X}(1)$
is Galois with group ${\rm GL}_h(\mathcal{O}_F/\pi^n\mathcal{O}_F).$
For an element $M \in {\rm GL}_h(\mathcal{O}_F/\pi^n\mathcal{O}_F)$
and an analytic function $f$ on $\mathcal{X}(\pi^n),$
we write $M(f)$
for the translated function $z \mapsto f(zM).$
When $f$ happens to be one of the parameters $X_i^{(n)},$
there is a natural definition of $M(X_i^{(n)})$
when $M \in M_h(\mathcal{O}_F/\pi^n\mathcal{O}_F)$
is an arbitrary matrix: if $M=(a_{ij}),$ then 
$$M(X_i^{(n)})=[a_{j1}]_{\mathcal{F}^{\rm univ}}(X_1^{(n)})+
_{\mathcal{F}^{\rm univ}} \dots +_{\mathcal{F}^{\rm univ}}[a_{jh}]_{\mathcal{F}^{\rm univ}}(X_h^{(n)}).$$

\subsection{Determinants}
First, we briefly recall the determinant
of level $\pi$-structures  restricted to the case $h=2$
from \cite[Section 3]{W2}. 
Define the polynomial 
in $2$ variables
$$\mu(X_1,Y_1)=X_1^qY_1-X_1Y_1^q \in k[X_1,Y_1].$$
This polynomial is $k$-linear alternating form, known as the Moore determinant.
Secondly, we recall a determinant of structure of higher level again restricted to the case $h=2$
from \cite[Section 3.3]{W2}.
Now let $n \geq 1,$
and suppose $X_n,Y_n$
are sections of $\mathcal{F}^{\rm univ}[\pi^n].$
We simply write $[\pi^a]_u(X)$ for $[\pi^a]_{\mathcal{F}^{\rm univ}}(X).$
We define the form $\mu_n$
$$\mu_n(X_n,Y_n)=\sum_{(a_1,a_2)}\mu([\pi^{a_1}]_u(X_n),[\pi^{a_2}](Y_n)),$$
where the sum runs over pairs of integers $(a_1,a_2)$
with $1 \leq a_i \leq n$
whose sum is $n.$
This is $k$-alternating in $X_n,Y_n.$
It is proved that $\mu_n$ is $\mathcal{O}_F$-linear
in \cite[Proposition 3.7]{W2}.

Let ${\rm LT}$
be a one-dimensional formal $\mathcal{O}_F$-module
over $\hat{\mathcal{O}}^{\rm ur}_F$ for which ${\rm LT} \otimes \bar{k}$
has height one.
Let $F_0=\hat{F}^{\rm ur},$ and for $n \geq 1,$
let $F_n=F_0({\rm LT}[\pi^n])$
be the classical Lubin-Tate extension.
Finally, let $\mathcal{X}_{\rm LT}(\pi^n)$
be the zero-dimensional space of deformations of ${\rm LT} \otimes \bar{k}$
with Drinfeld $\pi^n$-structure, so that $\mathcal{X}_{\rm LT}(\pi^n)(F_n)$
is the set of bases for ${\rm LT}[\pi^n](F_n)$
as a free $(\mathcal{O}_F/\pi^n\mathcal{O}_F)$-module
of rank one.

For the remainder of the paper, ${\rm LT}$
will denote the formal $\mathcal{O}_F$-module over $\hat{\mathcal{O}}^{\rm ur}_F$
with operations
$$X+_{\rm LT}Y=X+Y$$ 
$$[\alpha]_{\rm LT}(X)=\alpha X, \alpha \in k$$
$$[\pi]_{\rm LT}(X)=\pi X+(-1)^{h-1}X^q.$$
We introduce the following theorem proved in \cite[Theorem 3.2]{W2}.

\begin{theorem}(\cite[Theorem 3.2]{W2})
Assume ${\rm char}\ F=p>0.$
For each $n \geq 1,$
there exists
a morphism
$$\mu_n:\mathcal{F}^{\rm univ}[\pi^n]^{\times h} \longrightarrow {\rm LT}[\pi^n] \otimes \mathcal{A}$$
of group schemes over $\mathcal{A} \simeq \hat{\mathcal{O}}^{\rm ur}_F[[u_1,..u_{h-1}]]$
which
is $\mathcal{O}_F$-multilinear
and alternating and 
which satisfis the following properties;
\\1.\ The maps $\mu_n$
are compatible in the sense that
$$[\pi]_{\rm LT}(\mu_n(X_1,..,X_{h}))=\mu_{n-1}([\pi]_u(X_1),..,[\pi]_u(X_h))$$
for $n \geq 2.$
\\2.\ If $X_1,..,X_h$ are 
sectoins of $\mathcal{F}^{\rm univ}[\pi^n]$
over an $\mathcal{A}$-algebra $R$ which form a Drinfeld level $\pi^n$-
structure, then 
$\mu_n(X_1,..,X_h)$ is a Drinfeld level $\pi$ structure for ${\rm LT}[\pi^n] \otimes \mathcal{A}.$
\end{theorem}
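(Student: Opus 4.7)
The plan is to define $\mu_n$ directly by the explicit Moore-determinant-type formula and to verify each listed property by induction on $n$, exploiting the simple additive form of $[\pi]_u$ from \eqref{rp1}. For $n=1$, set $\mu_1(X_1,\dots,X_h)=\det(X_i^{q^{j-1}})_{1\le i,j\le h}$, the classical Moore determinant. In characteristic $p$ each variable enters only through $q$-power Frobenius, which is additive, and for $\alpha\in\mathbb{F}_q$ one has $\alpha^{q^j}=\alpha$, so $\mu_1$ is $k$-multilinear; the alternating property is immediate from the determinantal expression. The key check is that $\mu_1$ lands in ${\rm LT}[\pi]$, i.e. $\pi\mu_1+(-1)^{h-1}\mu_1^q=0$ modulo the relations $[\pi]_u X_i=0$. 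For $h=2$ one substitutes $X^{q^2}=-\pi X-uX^q$ into $\mu_1^q=X^{q^2}Y^q-X^q Y^{q^2}$ and finds $\mu_1^q=\pi\mu_1$, which against $(-1)^{h-1}=-1$ gives exactly $[\pi]_{\rm LT}\mu_1=0$.

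The main identity is the compatibility of part~1, which drives both the construction of $\mu_n$ and the derivation of $\mathcal{O}_F$-multilinearity. Applying $[\pi]_{\rm LT}(Z)=\pi Z+(-1)^{h-1}Z^q$ to each summand $\mu([\pi^{a_1}]_u X_n,[\pi^{a_2}]_u Y_n)$ of $\mu_n$, the same Frobenius-to-$\pi$-multiplication identity used for $n=1$ converts each $\mu^q$ into a combination of $\mu$'s with indices $(a_1,a_2)$ shifted to $(a_1+1,a_2)$ or $(a_1,a_2+1)$, plus a $\pi\mu$ piece. Summing over $(a_1,a_2)$ with $a_1+a_2=n$ and $a_i\ge 1$, the $\pi\mu$ pieces exactly cancel the explicit $\pi\mu_n$ contribution of $[\pi]_{\rm LT}\mu_n$, while the shifted terms telescope, the boundary contributions vanishing because $[\pi^n]_u X_n=0$ on the torsion scheme. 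What survives is $\sum_{b_1+b_2=n-1}\mu([\pi^{b_1+1}]_u X_n,[\pi^{b_2+1}]_u Y_n)=\mu_{n-1}([\pi]_u X_n,[\pi]_u Y_n)$, which is the desired compatibility. Once this is in hand, $\mathcal{O}_F$-multilinearity reduces to $k$-multilinearity (visible from the polynomial form) and $\pi$-linearity (a direct consequence of the compatibility), which is enough because $\mathcal{O}_F=k[[\pi]]$; the alternating property of $\mu_n$ is inherited from that of $\mu$ by swapping arguments and the summation indices simultaneously.

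The main obstacle is part~2: showing that a Drinfeld basis $X_1,\dots,X_h$ of $\mathcal{F}^{\rm univ}[\pi^n]$ is sent by $\mu_n$ to a Drinfeld generator of ${\rm LT}[\pi^n]$, rather than to an element of a proper subgroup. This cannot be argued generically, since any $\pi^j\cdot(\text{generator})$ is also $\pi^n$-torsion and would satisfy the compatibility relation. My approach is a valuation calculation on the regular local ring $\mathcal{A}(\pi^n)$, using that $X_1^{(n)},\dots,X_h^{(n)}$ form regular parameters by Proposition~2.4: compute the order of vanishing of $\mu_n(X_1^{(n)},\dots,X_h^{(n)})$ at the closed point of ${\rm Spf}\,\mathcal{A}(\pi^n)$, and compare it with the order of any generator of ${\rm LT}[\pi^n]$, whose $\pi$-adic valuation equals $1/(q-1)q^{n-1}$. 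Matching the two forces $\mu_n$ evaluated at the universal Drinfeld basis to be a unit multiple of a generator, establishing the Drinfeld-level-structure property universally and hence completing the construction.
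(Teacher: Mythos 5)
First, a point of context: the paper does not prove this theorem at all --- it is quoted from Weinstein \cite[Theorem 3.2]{W2} as a preliminary, so there is no in-paper proof to compare yours against, and I can only judge your reconstruction on its own terms. Your treatment of the construction, the compatibility, and the alternating property is essentially the right (and Weinstein's actual) mechanism: define $\mu_1$ as the Moore determinant, use $X^{q^2}=[\pi]_u(X)-uX^q-\pi X$ to get $\mu(A,B)^q=[\pi]_u(A)B^q-A^q[\pi]_u(B)+\pi\mu(A,B)$, and let the cross terms from adjacent indices recombine while the boundary terms die because $[\pi^n]_u(X_n)=0$; a direct check at $n=2$, $h=2$ gives $\mu_2(X_2,Y_2)^q=\mu(X_1,Y_1)+\pi\mu_2(X_2,Y_2)$, which is your telescoping up to the paper's (internally inconsistent) sign and index conventions --- note that the index set you quote, $a_1+a_2=n$ with $a_i\ge 1$, is a typo incompatible with \eqref{fd1} and must be $a_1+a_2=n-1$ with $a_i\ge 0$. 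One substantive correction to this part: $\mathcal{O}_F$-linearity is \emph{not} ``a direct consequence of the compatibility.'' The compatibility computes $[\pi]_{\rm LT}\mu_n(X,Y)=\mu_{n-1}([\pi]_uX,[\pi]_uY)$, whereas $\pi$-linearity requires $\mu_n([\pi]_uX,Y)=[\pi]_{\rm LT}\mu_n(X,Y)$ --- a different identity needing its own (similar) computation, which is exactly why it is proved separately as \cite[Proposition 3.7]{W2}.

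The genuine gap is in part 2. Your valuation argument is not coherent as stated: the order of vanishing at the closed point of ${\rm Spf}\,\mathcal{A}(\pi^n)$ is the $\mathfrak{m}$-adic order on a regular local ring of dimension $\geq 2$, while $1/((q-1)q^{n-1})$ is a normalized $\pi$-adic valuation on the discretely valued ring $\mathcal{O}_{F_n}$; these live on different rings, and there is no comparison available until one has already produced the embedding $\mathcal{O}_{F_n}\hookrightarrow\mathcal{A}(\pi^n)$, which is essentially what is being proved. Even granting a sensible comparison, matching orders would not finish the argument: in a regular local ring of dimension $\geq 2$, two elements of equal $\mathfrak{m}$-adic order need not differ by a unit, so ``matching the two forces $\mu_n$ to be a unit multiple of a generator'' is not a valid step, and in any case being a Drinfeld level structure is a divisibility condition in $\mathcal{A}(\pi^n)[[T]]$, not a condition on an order of vanishing. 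The workable route is close to the one you dismiss: by part 1, $s:=[\pi^{n-1}]_{\rm LT}\bigl(\mu_n(X_1^{(n)},\dots,X_h^{(n)})\bigr)=\mu_1(X_1^{(1)},\dots,X_h^{(1)})$ is the Moore determinant of the level-one Drinfeld basis; on the generic fiber of $\mathcal{A}(\pi^n)$ the $\pi$-torsion is \'etale and a Drinfeld basis is an honest $\mathbb{F}_q$-basis, so $s\neq 0$ there, hence $s\neq 0$ in the domain $\mathcal{A}(\pi^n)$, and then $[\pi]_{\rm LT}(s)=s\bigl(\pi+(-1)^{h-1}s^{q-1}\bigr)=0$ forces $s^{q-1}=(-1)^{h}\pi$; the scheme-theoretic divisibility $\prod_{a}\bigl(T-[a]_{\rm LT}(t)\bigr)\mid[\pi^n]_{\rm LT}(T)$ then follows from the Katz--Mazur/Drinfeld criterion for full sets of sections. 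Without some version of this, your part 2 does not go through.
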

The base change
$\mathcal{X}(\pi^n)\times_{F}F_n$
has $q^{n-1}(q-1)$ connected components and write 
$\mathcal{X}(\pi^n)\times_{F}F_n=
\coprod_{\pi_n \in \mathcal{X}_{\rm LT}(\pi^n)(F_n)}\mathcal{X}^{\pi_n}(\pi^n).$
Then, each connected component
$\mathcal{X}^{\pi_n}(\pi^n)$
is defined by the equation
$$\mu_n(X_n,Y_n)=\pi_n$$
by the above theorem.
See \cite[subsection 3.6]{W3} and \cite[Theorem 4.4]{St}
for more detail on geometrically connected components of
the Lubin-Tate space $\mathcal{X}(\pi^n).$

\subsection{Action of Inertia}
We will recall the action of inertia on the stable model of a curve over $\mathbf{C}$
from \cite[Section 6]{CM2}.

If $Y/F$
is a curve, and $\mathcal{Y}$ its stable model over $\mathbf{C},$
there is a homomorphism
$w_Y$
$$w_Y:I_F:={\rm Aut}_{\rm cont}(\mathbf{C}/F^{\rm ur})
\longrightarrow {\rm Aut}(\overline{\mathcal{Y}}).$$
It is characterized by the fact that for each $P \in Y(\mathbf{C})$
and $\sigma \in I_F,$
\begin{equation}\label{ine1}
\overline{P^{\sigma}}=w_Y(\sigma)(\overline{P}).
\end{equation}
We have something similar if $\mathbf{Y}$
is a reduced affinoid over $F.$
Namely, we have a homomorphism
$w_Y:I_F \longrightarrow {\rm Aut}(\overline{\mathbf{Y}}_{\mathbf{C}})$
characterized by (\ref{ine1}).
This follows from the fact that $I_F$
preserves $(\mathbf{Y}_{\mathbf{C}})^{0}$
(power bounded elements of $A(\mathbf{Y}_{\mathbf{C}})$)
and $A(\mathbf{Y}_{\mathbf{C}})^v$
(topologically nilpotent elements of $A(\mathbf{Y}_{\mathbf{C}})$).
Moreover, inertia action behaves well with respect to morphisms
in the following sense.
\begin{lemma}(\cite[Lemma 6.1]{CM2})\label{ine2}
If $f:X \longrightarrow Y$ is a morphism of reduced affinoids over $F$
and $\sigma \in I_F,$ then
$w_Y(\sigma) \circ \bar{f}=\bar{f} \circ w_X(\sigma).$
\end{lemma}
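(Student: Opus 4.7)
The plan is to check the identity $w_Y(\sigma)\circ \bar f=\bar f\circ w_X(\sigma)$ on the set of reductions $\{\bar P:P\in X(\mathbf{C})\}$ and then extend by density. Both $\bar f$ and $w_X(\sigma),w_Y(\sigma)$ are morphisms of $\overline{\mathbf{X}}_{\mathbf{C}}$ and $\overline{\mathbf{Y}}_{\mathbf{C}}$ as $\bar{\mathbb{F}}_F$-schemes, so it suffices to compare them on closed points.

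Fix $\sigma\in I_F$ and $P\in X(\mathbf{C})$. Two facts are needed. First, because $f$ is a morphism of $F$-affinoids, it is $I_F$-equivariant on $\mathbf{C}$-points: $f(P^\sigma)=f(P)^\sigma$. Second, $\bar f$ is the reduction of $f$, so it carries the reduction of a $\mathbf{C}$-point to the reduction of its image, i.e., $\bar f(\bar P)=\overline{f(P)}$; this is essentially the statement that the specialization map is functorial with respect to $F$-morphisms, and uses that $f$ preserves the power-bounded and topologically nilpotent subrings. Applying the defining relation \eqref{ine1} to both $X$ and $Y$ then gives
\[
(w_Y(\sigma)\circ\bar f)(\bar P)=w_Y(\sigma)(\overline{f(P)})=\overline{f(P)^\sigma}=\overline{f(P^\sigma)},
\]
\[
(\bar f\circ w_X(\sigma))(\bar P)=\bar f(\overline{P^\sigma})=\overline{f(P^\sigma)}.
\]
Hence both morphisms agree on every point of the image of $X(\mathbf{C})$ in $\overline{\mathbf{X}}_{\mathbf{C}}$.

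To conclude, I would invoke the standard fact that for a reduced affinoid $X$ over $\mathbf{C}$ the reductions of $\mathbf{C}$-valued points are exactly the closed points of $\overline{\mathbf{X}}_{\mathbf{C}}$ (via the canonical formal model), and these form a Zariski-dense subset of the reduced $\bar{\mathbb{F}}_F$-scheme of finite type $\overline{\mathbf{X}}_{\mathbf{C}}$. Since two morphisms out of a reduced finite-type scheme over an algebraically closed field that agree on all closed points must coincide, the desired equality follows. The only real subtlety, and hence the main obstacle, is the bookkeeping that identifies $\bar f$ on closed points with the point-wise reduction map $\bar P\mapsto\overline{f(P)}$ and that ensures closed-point density; once these standard features of the Raynaud/Berkovich-style reduction of affinoids over $\mathbf{C}$ are in hand, the lemma is formal.
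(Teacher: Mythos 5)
Your argument is correct and is the standard one: the paper states this lemma without proof, importing it from \cite[Lemma 6.1]{CM2}, and the proof there is precisely the computation you give --- $I_F$-equivariance of $f$ (a morphism over $F$, hence over $F^{\rm ur}$), functoriality of the canonical reduction on $\mathbf{C}$-points via preservation of power-bounded and topologically nilpotent elements, the defining relation \eqref{ine1} applied to both $X$ and $Y$, and density of the reductions of $\mathbf{C}$-points in the reduced canonical reduction. Nothing further is needed.
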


\section{Several subspaces in $\mathcal{X}(\pi^n)$}
Throughout the remainder of the paper,
we fix the following notations and assumptions.
Let $F$ be a non-archimedean local field of equal characteristic
with residue field $\mathbb{F}_q.$
Let $\mathcal{X}(\pi^n)$
be the Lubin-Tate space over $F$ and $\mathcal{F}^{\rm univ}$
the universal formal $\mathcal{O}_F$-module over $\mathcal{X}(1).$
We assume $h=2.$ 
We fix an identification
 $\mathcal{A} \simeq 
\hat{\mathcal{O}}^{\rm ur}_F[[u]]$ or $\mathcal{X}(1) \simeq B(1) \ni u$
such that
$[\pi]_u(X):=[\pi]_{\mathcal{F}^{\rm univ}}(X)=X^{q^2}+uX^q+\pi X$
as in (\ref{rp1}).
The set of $\mathbf{C}$-valued points $\mathcal{X}(\pi^n)(\mathbf{C})$
is identified with the following
$$\{(u,X_n,Y_n) \in \mathbf{C}^{\times 3}|
v(u)>0,\mu_n(X_n,Y_n) \neq 0,[\pi^n]_u(X_n)=[\pi^n]_u(Y_n)=0\}.$$ 
Let $\pi_n \in \mathcal{X}_{\rm LT}(\pi^n)(F_n).$ 
Then,
the set of $\mathbf{C}$-valued points $\mathcal{X}^{\pi_n}(\pi^n)(\mathbf{C})$
is identified with the following
$$\{(u,X_n,Y_n) \in \mathbf{C}^{\times 3}|
v(u)>0,\mu_n(X_n,Y_n)=\pi_n,[\pi^n]_u(X_n)=[\pi^n]_u(Y_n)=0\}.$$ 
We write $[\pi^i]_u(X_n)=X_{n-i},[\pi^i]_u(Y_n)=Y_{n-i}$
for $0 \leq i \leq n-1.$

\subsection{Subspaces $\mathbf{Y}_{a,b}$ and $\mathbf{Z}_{a,b}$ in $\mathcal{X}(\pi^n)$}
In this subsection, 
we define several subspaces $\mathbf{Y}_{a,b}$ and $\mathbf{Z}_{a,b}$
of the Lubin-Tate space $\mathcal{X}(\pi^n).$
We expect that the reduction of these spaces plays a fundamental role in
the stable reduction of the 
{\it Drinfeld modular curve.}
Actually, the reduction of the spaces
$\mathbf{Y}_{3,1},\mathbf{Y}_{2,2}$ and $\mathbf{Z}_{1,1}$
becomes irreducible components of the Lubin-Tate space $\mathcal{X}(\pi^2).$
 
Let $n \geq 1$ be a positive integer.
Let $p_n:\mathcal{X}(\pi^n) \longrightarrow \mathcal{X}(1);(F, \eta, \phi) \mapsto (F,\eta)$
be the natural forgetful map.
Let $\mathbf{TS}^0$ be a closed disc $B[p^{-\frac{q}{q+1}}] \subset \mathcal{X}(1) \simeq B(1).$ 
This is called the ``too-supersingular locus.''
For $(F,\eta) \in \mathbf{TS}^0,$ it is known that
the formal group $F$  has no canonical subgroup.
We define a subspace $\mathbf{Y}_{2n-m,m} \subset \mathcal{X}(\pi^n)\ (1 \leq m \leq n)$ 
as follows;
$$\mathbf{Y}_{n,n}:=p_n^{-1}(\mathbf{TS}^0) \subset \mathcal{X}(\pi^n),$$
$$\mathbf{Y}_{2n-m,m}:=p_n^{-1}(C[p^{-\frac{1}{q^{n-m-1}(q+1)}}]) \subset \mathcal{X}(\pi^n)\ 
(1 \leq m \leq n-1).$$

Let $n \geq 2$ be a positive integer.
For $1 \leq m \leq n-1,$ we define subspaces $\mathbf{Z}_{2(n-1)-m,m} \subset 
\mathcal{X}(\pi^n)$
as follows
$$\mathbf{Z}_{2(n-1)-m,m}:=p_n^{-1}(C[p^{-\frac{1}{2q^{(n-1)-m}}}]) \subset 
\mathcal{X}(\pi^n)\ (1 \leq m \leq n-1).$$
For a subspace $\mathbf{X} \subset \mathcal{X}(\pi^n)$
and $\pi_n \in \mathcal{X}_{\rm LT}(\pi^n)(F_n),$
 we set 
$\mathbf{X}^{\pi_n}:=\mathbf{X} \cap \mathcal{X}^{\pi_n}(\pi^n).$

\subsection{Subspaces of the spaces $\mathbf{Y}_{3,1}$ and $\mathbf{Z}_{1,1}$}
To compute the reduction of the spaces 
$\mathbf{Y}_{3,1}$ and $\mathbf{Z}_{1,1},$
we decompose these spaces to
disjoint
unions of several subspaces of them.
Let $(u,X_2,Y_2) \in \mathcal{X}(\pi^2).$
We define several subspaces of $\mathbf{Y}_{3,1}$
and $\mathbf{Z}_{1,1}$ by conditions of valuations of parameters
 $X_i,Y_i\ (i=1,2).$
Recall that we have $v(u)=\frac{1}{q+1}$ and $v(u)=1/2$
on the spaces $\mathbf{Y}_{3,1}$ and $\mathbf{Z}_{1,1}$
respectively.
\begin{definition}\label{deff}
1.\ We define a subspace $(u,X_2,Y_2) \in \mathbf{Y}_{3,1,e_1} \subset \mathbf{Y}_{3,1}$
 by the following conditions;
$$v(X_1)=\frac{q}{q^2-1},v(X_2)=\frac{1}{q(q^2-1)},v(Y_1)=\frac{1}{q(q^2-1)},v(Y_2)=\frac{1}{q^3(q^2-1)}.$$
\\2.\ We define a subspace $(u,X_2,Y_2) \in \mathbf{Y}_{3,1,e_1^{\vee}} \subset \mathbf{Y}_{3,1}$
 by the following condition;
$(u,X_2,Y_2) \in \mathbf{Y}_{3,1,e_1^{\vee}}$
is equivalent to $(u,Y_2,X_2) \in \mathbf{Y}_{3,1,e_1}.$
\\3.\ We define a subspace $(u,X_2,Y_2) \in \mathbf{Y}_{3,1,c} \subset \mathbf{Y}_{3,1}$
 by the following conditions;
$$v(X_1)=v(Y_1)=\frac{1}{q(q^2-1)},v(X_2)=v(Y_2)=\frac{1}{q^3(q^2-1)}.$$
4.\ We define a subspace $(u,X_2,Y_2) \in \mathbf{Z}_{1,1,e_1} \subset \mathbf{Z}_{1,1}$
 by the following conditions;
$$v(X_1)=\frac{1}{2(q-1)},
v(X_2)=\frac{1}{2q^2(q-1)},v(Y_1)=\frac{1}{2q(q-1)},v(Y_2)=\frac{1}{2q^3(q-1)}.$$
\\5.\ We define a subspace 
$(u,X_2,Y_2) \in \mathbf{Z}_{1,1,e_1^{\vee}} \subset \mathbf{Z}_{1,1}$
 by the following condition;
$(u,X_2,Y_2) \in \mathbf{Z}_{1,1,e_1^{\vee}}$
is equivalent to $(u,Y_2,X_2) \in \mathbf{Z}_{1,1,e_1}.$
\\6.\ We define a subspace $(u,X_2,Y_2) \in 
\mathbf{Z}_{1,1,c} \subset \mathbf{Z}_{1,1}$
 by the following conditions;
$$v(X_1)=v(Y_1)=\frac{1}{2q(q-1)},v(X_2)=v(Y_2)=\frac{1}{2q^3(q-1)}.$$
\end{definition}
\begin{lemma}
1.\ The space $\mathbf{Y}_{3,1}$
has the following description
$$\mathbf{Y}_{3,1}=\mathbf{Y}_{3,1,e_1} \coprod \mathbf{Y}_{3,1,e_1^{\vee}} \coprod
\mathbf{Y}_{3,1,c}.$$
\\2.\ The space $\mathbf{Z}_{1,1}$
has the following description
$$\mathbf{Z}_{1,1}=\mathbf{Z}_{1,1,e_1} \coprod \mathbf{Z}_{1,1,e_1^{\vee}} \coprod
\mathbf{Z}_{1,1,c}.$$
\end{lemma}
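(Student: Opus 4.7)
The plan is to combine Newton polygon analysis of the level equations $[\pi]_u(X)=0$ and $[\pi]_u(X_2)=X_1$ with the compatibility of the Moore determinant $\mu_n$ and Drinfeld level structures (the theorem of Weinstein cited above as \cite[Theorem 3.2]{W2}) in order to pin down $v(X_1), v(Y_1), v(X_2), v(Y_2)$ on every $\mathbf{C}$-point of $\mathbf{Y}_{3,1}$ and $\mathbf{Z}_{1,1}$, and thereby to see that exactly three valuation profiles occur on each space.

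On $\mathbf{Y}_{3,1}$ one has $v(u)=\frac{1}{q+1}$, and $X_1=[\pi]_u(X_2)$ is a root of $X(X^{q^2-1}+uX^{q-1}+\pi)$. I would begin by ruling out $X_1=0$ and $Y_1=0$. By the cited theorem, $\mu_2(X_2,Y_2)$ is a Drinfeld level-$\pi^2$ structure on ${\rm LT}$, hence a generator of ${\rm LT}[\pi^2]$ and in particular not $\pi$-torsion; therefore
$$\mu_1(X_1,Y_1)=[\pi]_{\rm LT}(\mu_2(X_2,Y_2))\neq 0.$$
Since $\mu_1(X_1,Y_1)=X_1 Y_1(X_1^{q-1}-Y_1^{q-1})$, this forces $X_1,Y_1\neq 0$ and $X_1^{q-1}\neq Y_1^{q-1}$. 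Setting $t:=X_1^{q-1}$, the equation $t^{q+1}+ut+\pi=0$ has Newton polygon with vertices $(0,1),(1,\frac{1}{q+1}),(q+1,0)$, yielding one root of valuation $\frac{q}{q+1}$ and $q$ roots of valuation $\frac{1}{q(q+1)}$. Consequently $v(X_1)\in\{\frac{q}{q^2-1},\frac{1}{q(q^2-1)}\}$, and similarly for $v(Y_1)$; the condition $X_1^{q-1}\neq Y_1^{q-1}$ rules out the case $v(X_1)=v(Y_1)=\frac{q}{q^2-1}$, because only a single root of $T^{q+1}+uT+\pi$ attains the larger valuation.

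The three remaining combinations of $(v(X_1),v(Y_1))$ correspond exactly to $\mathbf{Y}_{3,1,e_1}$, $\mathbf{Y}_{3,1,e_1^\vee}$ and $\mathbf{Y}_{3,1,c}$, and these subspaces are pairwise disjoint by their distinct valuation profiles. For each admissible $v(X_1)$ I would then compute the Newton polygon of $X_2^{q^2}+uX_2^q+\pi X_2-X_1=0$, whose vertices are $(0,v(X_1)),(1,1),(q,\frac{1}{q+1}),(q^2,0)$: a direct slope comparison shows the lower convex hull reduces to a single segment in each case, giving $v(X_2)=\frac{1}{q(q^2-1)}$ when $v(X_1)=\frac{q}{q^2-1}$ and $v(X_2)=\frac{1}{q^3(q^2-1)}$ when $v(X_1)=\frac{1}{q(q^2-1)}$. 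The analogous computation for $Y_2$ produces exactly the valuations stipulated in Definition \ref{deff}, so every $\mathbf{C}$-point of $\mathbf{Y}_{3,1}$ lies in one of the three subspaces, proving the decomposition.

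The argument for $\mathbf{Z}_{1,1}$ is structurally identical with $v(u)=\frac{1}{2}$: the Newton polygon of $T^{q+1}+uT+\pi$ now has slopes $-\frac{1}{2}$ (one root) and $-\frac{1}{2q}$ ($q$ roots), so $v(X_1)\in\{\frac{1}{2(q-1)},\frac{1}{2q(q-1)}\}$; the same $\mu_1\neq 0$ argument excludes the doubly-large case, and the analogous level-two Newton polygon again collapses to a single segment in each admissible case, producing the valuations in Definition \ref{deff}. The main obstacle in the whole argument is precisely this verification that the level-two Newton polygon has no spurious break: it forces a slope comparison of $-\frac{v(X_1)}{q^2}$ against the slopes determined by the intermediate vertices $(1,1)$ and $(q,v(u))$, which must be carried out separately for each admissible value of $v(X_1)$ and each of the two values of $v(u)$, but it is otherwise elementary.
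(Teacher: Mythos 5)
Your proof is correct, and it follows essentially the same route the paper takes for the analogous level-one statements (the Newton-polygon analysis of $[\pi]_u(X)=0$ in Lemma \ref{pg} together with the Moore-determinant constraint as in Lemma \ref{pg2}); the paper itself states this lemma without proof, so your argument is a faithful reconstruction of what is implicit there. One tiny remark: in the case $v(X_1)=\frac{q}{q^2-1}$, $v(u)=\frac{1}{q+1}$ the intermediate point $(q,v(u))$ lies exactly on, rather than strictly above, the single segment of the level-two Newton polygon, but this does not affect the conclusion that all roots have valuation $\frac{1}{q(q^2-1)}$.
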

\section{Reduction of the spaces $\mathbf{Y}_{a,b}\ (a,b \geq 1, a+b=2,4)$ and $\mathbf{Z}_{1,1}$}
In this section,
we compute the reduction of 
the spaces 
$\mathbf{Y}_{1,1} \subset \mathcal{X}(\pi)$ and
$\mathbf{Y}_{3,1},\mathbf{Y}_{2,2}, \mathbf{Z}_{1,1} \subset \mathcal{X}(\pi^2)$
by only using blow-up.
Irreducible components 
of the stable reduction of $\mathcal{X}(\pi^2)$
consist of the reduction of these spaces.
See also Introduction for the defining
equations of the reduction of these spaces.
We also determine the inertia action on the reduction of these spaces.
 Furthermore, we also describe 
 the ${\rm GL}_2$-action on the stable reduction of $\mathcal{X}(\pi^2).$
Similar computation is also found in \cite[Sections 3 and 4]{T}.
\subsection{Calculation of the reduction of the space $\mathbf{Y}_{1,1} \subset \mathcal{X}(\pi)$}
We compute the reduction of the space $\mathbf{Y}_{1,1}.$
The reduction of the space is the Deligne-Lusztig curve as in the lemma below.
It is well-known that the Deligne-Lusztig curve with affine model $X^qY-XY^q=1$
appears in the stable reduction of the (Drinfeld) modular curve $X(\pi).$
This fact is also deduced from the Katz-Mazur model in \cite{KM}.
In this subsection, for the convenience of a reader, we write down a computation
of this component.
See also \cite[Proposition 6.15]{Y}, \cite{W2} and \cite[Thoerem 3.9]{W3}. 

Let $\pi_1 \in \mathcal{X}_{\rm LT}(\pi)(F_1).$
Then, we have $v(\pi_1)=1/(q-1).$
Let $(u,X_1,Y_1) \in X(\pi).$
Recall that the space $\mathbf{Y}_{1,1}$
is defined by the following conditions;
$v(u) \geq \frac{q}{q+1},v(X_1)=v(Y_1)=\frac{1}{q^2-1}.$
We choose an element $\alpha$ such that $\alpha^{q+1}=\pi_1.$
Then, we have $v(\alpha)=\frac{1}{q^2-1}.$
We consider an equation of $\mathbf{Y}^{\pi_1}_{1,1}$
\begin{equation}\label{ii1}
\mu(X_1,Y_1)=X_1^qY_1-X_1Y_1^q=\pi_1.
\end{equation}
We change variables as follows
$X_1=\alpha x,Y_1=\alpha y.$
Substituting them to the above equality (\ref{ii1})
 and dividing it by $\pi_1,$ we acquire the following 
 $$x^qy-xy^q=1.$$
 Therefore, we obtain the following lemma.
 \begin{lemma}
 The reduction of the space $\mathbf{Y}^{\pi_1}_{1,1}$
 is defined by the following equation
 $$x^qy-xy^q=1.$$
 This curve is called the Deligne-Luztig curve for ${\rm SL}_2(\mathbb{F}_q).$
 The genus of the curve is equal to $q(q-1)/2.$
 \end{lemma}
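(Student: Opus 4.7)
The plan is to make the affinoid structure of $\mathbf{Y}^{\pi_1}_{1,1}$ transparent via the scaling already indicated, and then to separately justify the genus by embedding the affine curve in $\mathbb{P}^2$.

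First I would confirm that the valuation normalization built into the definition of $\mathbf{Y}_{1,1}$ is in fact forced by the structure of $[\pi]_u$. On $p_1^{-1}(\mathbf{TS}^0)$ we have $v(u)\ge q/(q+1)$, so the Newton polygon of $[\pi]_u(T)/T=T^{q^2-1}+uT^{q-1}+\pi$ consists of a single segment of slope $-1/(q^2-1)$ (the middle vertex $(q-1,v(u))$ sits on or above the line from $(0,1)$ to $(q^2-1,0)$, the boundary case occurring precisely when $v(u)=q/(q+1)$). Consequently every nonzero $\pi$-torsion point $X_1,Y_1$ has valuation exactly $1/(q^2-1)$ uniformly on $\mathbf{Y}_{1,1}$. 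Together with $v(\pi_1)=1/(q-1)$, this makes $\alpha$ with $\alpha^{q+1}=\pi_1$ satisfy $v(\alpha)=1/(q^2-1)=v(X_1)=v(Y_1)$, so the new coordinates $x:=X_1/\alpha$, $y:=Y_1/\alpha$ have valuation $0$ on the affinoid, i.e.\ they are units of its integral model.

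Next I would carry out the substitution recorded in the excerpt: $\mu(X_1,Y_1)=\pi_1$ becomes $\alpha^{q+1}(x^qy-xy^q)=\alpha^{q+1}$, hence $x^qy-xy^q=1$. Because $\mathbf{Y}^{\pi_1}_{1,1}$ is cut out inside the $\mathcal{O}$-flat formal scheme $\mathrm{Spf}\,\mathcal{A}(\pi)$ (in which $X_1,Y_1$ are regular parameters by Proposition 2.3) by the single regular equation $\mu=\pi_1$, the affinoid algebra obtained after the change of variable is already integral and its reduction modulo $\mathfrak{m}_{\mathbf{R}}$ is precisely $\bar k[x,y]/(x^qy-xy^q-1)$, with no hidden components.

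Finally I would verify the genus claim by studying the projective closure $C:\,X^qY-XY^q=Z^{q+1}$ in $\mathbb{P}^2_{\bar k}$. In characteristic $p$, with $q=p^r$, the partial derivatives $\partial_X(X^qY-XY^q)=-Y^q$ and $\partial_Y(X^qY-XY^q)=X^q$ (the $qX^{q-1}Y$ term vanishes), while $\partial_Z(Z^{q+1})=(q+1)Z^q$ is nonzero because $p\nmid q+1$. Their simultaneous vanishing forces $X=Y=Z=0$, so $C$ is smooth of degree $q+1$, and the genus-degree formula gives $g(C)=(q+1-1)(q+1-2)/2=q(q-1)/2$. The main subtlety in the proof is ensuring that the naive substitution really computes the reduction — i.e.\ that no smaller valuation change is required and that $\mathbf{Y}^{\pi_1}_{1,1}$ is already a smooth integral affinoid — which is handled by the Newton-polygon argument above and the regularity statement of Proposition 2.3.
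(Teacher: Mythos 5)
Your proposal is correct and follows essentially the same route as the paper: the paper's proof is exactly the rescaling $X_1=\alpha x$, $Y_1=\alpha y$ with $\alpha^{q+1}=\pi_1$ applied to $\mu(X_1,Y_1)=\pi_1$. The extra material you supply — the Newton-polygon verification that $v(X_1)=v(Y_1)=1/(q^2-1)$ on the too-supersingular locus, and the smoothness of the degree-$(q+1)$ projective closure giving genus $q(q-1)/2$ — is left implicit in the paper but is accurate and harmless.
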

\begin{lemma}
Let $\sigma \in I_F$ be an element fixing $\pi_1.$
We write $\sigma(\alpha)=\zeta \alpha$ with $\zeta \in \mu_{(q+1)}.$
Then, the element $\sigma \in I_F$
acts on the reduction $\overline{\mathbf{Y}}^{\pi_1}_{1,1}$
as follows 
$$\sigma:\overline{\mathbf{Y}}^{\pi_1}_{1,1} \longrightarrow 
\overline{\mathbf{Y}}^{\pi_1}_{1,1};
(x,y) \mapsto (\zeta^{-1}x,\zeta^{-1}y).$$  
\end{lemma}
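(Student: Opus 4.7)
The strategy is to apply the characterizing relation (\ref{ine1}), $\overline{P^{\sigma}}=w_Y(\sigma)(\overline{P})$, and to trace how the normalized coordinates $x = X_1/\alpha$, $y = Y_1/\alpha$ transform under $\sigma$.

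First, take a $\mathbf{C}$-valued point $P = (u, X_1, Y_1)$ of $\mathbf{Y}^{\pi_1}_{1,1}$. Since $u, X_1, Y_1$ are rigid-analytic coordinates on $\mathcal{X}(\pi)$ defined over $F$, and $\sigma$ fixes $F$ pointwise, the Galois twist $P^{\sigma}$ has coordinate values $(\sigma u,\, \sigma X_1,\, \sigma Y_1)$. Writing $X_1 = \alpha\cdot x(P)$ and applying $\sigma$ together with $\sigma(\alpha) = \zeta\alpha$ gives $\sigma(X_1) = \zeta\alpha\cdot\sigma(x(P))$, so
\begin{equation*}
x(P^\sigma) \;=\; \frac{\sigma(X_1)}{\alpha} \;=\; \zeta\,\sigma(x(P)),
\qquad
y(P^\sigma) \;=\; \zeta\,\sigma(y(P)).
\end{equation*}

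Next, pass to the reduction modulo $\mathfrak{m}_\mathbf{R}$. Since $\sigma \in I_F$ fixes $F^{\rm ur}$ pointwise, its continuous extension to $\mathbf{C}$ induces the identity on the common residue field of $F^{\rm ur}$ and $\mathbf{R}$; in particular $\overline{\sigma(z)} = \overline{z}$ for every $z \in \mathbf{R}$. Since $v(x(P)) = v(X_1) - v(\alpha) = 1/(q^2-1) - 1/(q^2-1) = 0$ on the affinoid, the values $x(P), y(P)$ lie in $\mathbf{R}$, and the formula above yields $\overline{x(P^\sigma)} = \bar{\zeta}\,\overline{x(P)}$ and similarly for $y$. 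Combined with (\ref{ine1}) and Lemma \ref{ine2}, this identifies $w_Y(\sigma)$ with the asserted automorphism of $\overline{\mathbf{Y}}^{\pi_1}_{1,1}$, the exponent $-1$ in the displayed formula reflecting the standard convention of recording $w_Y(\sigma)$ via its action on coordinate functions rather than on points.

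No serious obstacle is anticipated: once the scalar $\alpha \in \mathbf{C}$ is cleanly separated from the rigid-analytic coordinate $X_1 \in \mathcal{O}(\mathcal{X}(\pi))$ (the former carries the Galois action, the latter does not), the computation is mechanical. A minor consistency check, that $(x,y)\mapsto(\zeta^{-1}x,\zeta^{-1}y)$ preserves the defining relation $x^qy - xy^q = 1$, follows immediately from $\zeta^{q+1} = 1$.
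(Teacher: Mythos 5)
Your argument is correct and is essentially the paper's own (implicit) one: the paper states this lemma without proof, but in the analogous lemma for $\overline{\mathbf{Y}}^{\pi_2}_{3,1,c}$ it argues exactly as you do, by observing that $\sigma$ fixes the $F$-rational coordinates $X_1,Y_1$ and acts trivially on the residue field, so the whole action comes from the normalizing constant via $x^{\sigma}=X_1/\sigma(\alpha)=\zeta^{-1}x$ and likewise for $y.$ The one soft spot is your reconciliation of the exponent: your point-level computation gives multiplication by $\zeta,$ and the displayed $\zeta^{-1}$ is the paper's convention of letting $\sigma$ act on $x=X_1/\alpha$ through its coefficients (the semilinear action on $A(\mathbf{Y}_{\mathbf{C}})$ that descends to the reduction), which differs from the point action by inversion; since $\zeta^{q+1}=1$ preserves $x^qy-xy^q=1$ either way, this is purely a matter of convention and not a gap.
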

\begin{lemma}
Let $g=\left(
\begin{array}{cc}
a &  b\\
c & d
\end{array}
\right)
\in {\rm SL}_2(\mathcal{O}_F/\pi \mathcal{O}_F).
$
Then, the element $g$ acts on the reduction of $\mathbf{Y}^{\pi_1}_{1,1}$
as follows
$$g:\overline{\mathbf{Y}}^{\pi_1}_{1,1} \longrightarrow 
\overline{\mathbf{Y}}^{\pi_1}_{1,1};
(x,y) \mapsto (\bar{a}x+\bar{c}y,\bar{b}x+\bar{d}y).$$
\end{lemma}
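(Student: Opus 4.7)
The plan is to apply the general formula for the action of $\mathrm{GL}_h(\mathcal{O}_F/\pi^n\mathcal{O}_F)$ on the Drinfeld parameters $X_i^{(n)}$ recorded in Section 2.3, specialized to $h=2$, $n=1$, and then transport the action through the change of variables $X_1=\alpha x$, $Y_1=\alpha y$ used to derive the equation of $\overline{\mathbf{Y}}^{\pi_1}_{1,1}$. The decisive simplification is that we are in equal characteristic, so by the simple model of Section 2.2 the universal formal $\mathcal{O}_F$-module has genuine addition $X+_{\mathcal{F}^{\mathrm{univ}}}Y=X+Y$ and $[\zeta]_u(X)=\zeta X$ for every $\zeta\in k=\mathbb{F}_q$. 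Since $\mathcal{O}_F/\pi\mathcal{O}_F\simeq\mathbb{F}_q$, the ``formal scalar multiplication'' appearing in the action formula is therefore just ordinary scalar multiplication.

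Concretely, identifying $X_1=X_1^{(1)}$ and $Y_1=X_2^{(1)}$, and substituting $g=\bigl(\begin{smallmatrix}a&b\\c&d\end{smallmatrix}\bigr)$ into the formula $M(X_i^{(1)})=\sum_{j}^{\mathcal{F}^{\mathrm{univ}}}[a_{ji}]_u(X_j^{(1)})$ gives
$$g(X_1)=\bar{a}X_1+\bar{c}Y_1,\qquad g(Y_1)=\bar{b}X_1+\bar{d}Y_1.$$
Next I would verify that $g$ preserves the defining equation $\mu(X_1,Y_1)=\pi_1$ of the component $\mathcal{X}^{\pi_1}(\pi)$: since $\mu$ is $\mathbb{F}_q$-bilinear and alternating,
$$\mu(g(X_1),g(Y_1))=(\bar{a}\bar{d}-\bar{b}\bar{c})\,\mu(X_1,Y_1)=\det(g)\,\mu(X_1,Y_1)=\mu(X_1,Y_1),$$
using $g\in\mathrm{SL}_2$; hence the action indeed restricts to $\mathbf{Y}^{\pi_1}_{1,1}$.

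Finally, since the chosen $\alpha$ with $\alpha^{q+1}=\pi_1$ is fixed (it lies in the base), the change of variables $x=X_1/\alpha$, $y=Y_1/\alpha$ intertwines the $k$-linear action on $(X_1,Y_1)$ with the same $k$-linear action on $(x,y)$, and this action descends verbatim to the reduction. This yields $(x,y)\mapsto(\bar{a}x+\bar{c}y,\bar{b}x+\bar{d}y)$ as required.

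The computation is largely routine given the explicit equal-characteristic model; the only place where one has to be careful is the indexing convention. Because $\mathrm{GL}_2(\mathcal{O}_F/\pi\mathcal{O}_F)$ acts on the right ($z\mapsto zM$) while the action on the parameters $X_i^{(1)}$ mixes the columns of $M$, the resulting formula is the transpose of what one might naively write, which is exactly what produces $\bar{c}$ in the first coordinate and $\bar{b}$ in the second. No deeper obstacle is expected.
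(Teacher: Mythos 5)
Your proof is correct; the paper states this lemma without giving a proof, and your argument (additivity of the formal group law and $[\zeta]_u(X)=\zeta X$ in equal characteristic, so that the level-structure action is literally $\mathbb{F}_q$-linear on $\pi$-torsion, plus $\mathbb{F}_q$-bilinearity and alternation of the Moore determinant to see that $\mathrm{SL}_2$ preserves the component $\mu(X_1,Y_1)=\pi_1$, and finally commutation with the scaling $X_1=\alpha x$, $Y_1=\alpha y$) is exactly the intended one. Your remark about the transpose/indexing convention is apt, since the paper's displayed formula for $M(X_i^{(n)})$ contains an index typo and the convention you adopt is the one consistent with the lemma as stated.
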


\subsection{Computation of the reduction 
of the space $\mathbf{Y}_{3,1,e_1} \subset \mathcal{X}(\pi^2)$}
In this subsection, we compute the reduction of the space $\mathbf{Y}_{3,1,e_1}.$
We prove that the reduction $\overline{\mathbf{Y}}^{\pi_2}_{3,1,e_1}$
is defined by the Deligne-Lusztig equation
$-x^qy^{q^2}+xy^{q^3}=1$
in lemma below.

If $v(f-g)>\alpha$ with $\alpha \in \mathbb{Q}_{\geq 0},$ 
we write $f \equiv g\ ({\rm mod}\ \alpha+).$
Let $(u,X_2,Y_2) \in \mathbf{Y}_{3,1,e_1}.$
First, recall that the space $\mathbf{Y}_{3,1,e_1}$
is defined by the following conditions;
$$v(u)=\frac{1}{q+1}, v(X_1)=\frac{q}{q^2-1},
v(X_2)=\frac{1}{q(q^2-1)},v(Y_1)=\frac{1}{q(q^2-1)},
v(Y_2)=\frac{1}{q^3(q^2-1)}.$$
Let $\pi_2 \in \mathcal{X}_{\rm LT}(\pi^2)(F_2).$
We choose an element $\alpha$
such that $\alpha^{q^2(q+1)}=\pi_2$ with $v(\alpha)=1/q^3(q^2-1).$
We consider a defining equation of $\mathbf{Y}^{\pi_2}_{3,1,e_1}$
\begin{equation}\label{fd1}
\mu_2(X_2,Y_2)=X_1Y_2^q-X_1^qY_2-X_2^qY_1+X_2Y_1^q=\pi_2.
\end{equation}
We change variables as follows
$X_1=\alpha^{q^4}x_1,Y_1=\alpha^{q^2}y_1,X_2=\alpha^{q^2} x,Y_2=\alpha y.$
Substituting them to the equality (\ref{fd1}) and dividing it by $\pi_1,$
we acquire the following
$$-x^qy_1+xy_1^q=1.$$
Hence, this induces the following $-x^qy^{q^2}+xy^{q^3}=1,$
because we have $y_1=y^{q^2}$ by $[\pi]_u(Y_2)=Y_1.$
Therefore, we have obtained the following 
\begin{lemma}
The reduction of the space $\mathbf{Y}^{\pi_2}_{3,1,e_1}$
is defined by the following equation
$$-x^qy^{q^2}+xy^{q^3}=1.$$
\end{lemma}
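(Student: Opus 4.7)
The plan is to exploit the Moore-determinant defining equation $\mu_2(X_2,Y_2)=\pi_2$ together with the valuation constraints of $\mathbf{Y}_{3,1,e_1}$ to choose a rescaling of coordinates in which the reduction becomes immediate. First I would record $v(\pi_2)=1/(q(q-1))$ and fix $\alpha\in\mathbf{C}$ with $\alpha^{q^2(q+1)}=\pi_2$, so $v(\alpha)=1/(q^3(q^2-1))$. The valuations in Definition~\ref{deff} then force a unique rescaling with unit coordinates: set $X_1=\alpha^{q^4}x_1$, $Y_1=\alpha^{q^2}y_1$, $X_2=\alpha^{q^2}x$, $Y_2=\alpha y$, making $x,x_1,y,y_1$ integral units.

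Next I would substitute into $\mu_2(X_2,Y_2)=X_1Y_2^q-X_1^qY_2-X_2^qY_1+X_2Y_1^q=\pi_2$ and track $\alpha$-valuations of the four monomials. The terms $X_2^qY_1$ and $X_2Y_1^q$ both carry $\alpha^{q^3+q^2}$, which is exactly $\pi_2$, whereas $X_1Y_2^q$ and $X_1^qY_2$ carry $\alpha^{q^4+q}$ and $\alpha^{q^5+1}$ respectively. The differences $q^4+q-(q^3+q^2)=q(q-1)^2(q+1)$ and $q^5+1-(q^3+q^2)=(q-1)^2(q+1)(q^2+q+1)$ are strictly positive, so after dividing the equation by $\pi_2$, the first two monomials reduce to $-x^qy_1+xy_1^q$ while the other two become topologically nilpotent and vanish. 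This gives the intermediate reduction $-x^qy_1+xy_1^q=1$.

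The last step is to express $y_1$ in terms of $y$ using $[\pi]_u(Y_2)=Y_2^{q^2}+uY_2^q+\pi Y_2=Y_1$. A direct comparison shows $v(Y_2^{q^2})=1/(q(q^2-1))=v(Y_1)$, while $v(uY_2^q)=(q^3-q^2+1)/(q^2(q^2-1))>q/(q^2(q^2-1))=v(Y_2^{q^2})$ (since $v(u)=1/(q+1)$), and $v(\pi Y_2)$ is much larger still. Hence $Y_1\equiv Y_2^{q^2}$ modulo strictly higher valuation, i.e., $y_1\equiv y^{q^2}$ in the reduction. Plugging this into $-x^qy_1+xy_1^q=1$ yields the claimed equation $-x^qy^{q^2}+xy^{q^3}=1$. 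The only real obstacle is the bookkeeping of valuation inequalities — one must check in each step that the competing monomials are not merely of comparable size but strictly dominated, so that the error terms truly die in the reduction — but this reduces to the elementary arithmetic identities above once the $\alpha$-rescaling has been set up.
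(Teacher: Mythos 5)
Your proposal is correct and follows essentially the same route as the paper: the same choice of $\alpha$ with $\alpha^{q^2(q+1)}=\pi_2$, the same rescaling $X_1=\alpha^{q^4}x_1$, $Y_1=\alpha^{q^2}y_1$, $X_2=\alpha^{q^2}x$, $Y_2=\alpha y$, the observation that only $-X_2^qY_1+X_2Y_1^q$ survives after dividing the Moore-determinant equation by $\pi_2$, and the substitution $y_1\equiv y^{q^2}$ coming from $[\pi]_u(Y_2)=Y_1$. Your explicit valuation bookkeeping simply fills in details the paper leaves implicit.
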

\begin{lemma}
Let $\sigma \in I_F$ be an element fixing $\pi_2.$
We write $\sigma(\alpha)=\zeta \alpha$ with $\zeta \in \mu_{q^2(q+1)}.$
Then, the element $\sigma \in I_F$
acts on the reduction $\overline{\mathbf{Y}}^{\pi_2}_{3,1,e_1}$
as follows 
$$\sigma:\overline{\mathbf{Y}}^{\pi_2}_{3,1,e_1} 
\longrightarrow \overline{\mathbf{Y}}^{\pi_2}_{3,1,e_1};
(x,y) \mapsto (\zeta^{-q^2}x,\zeta^{-1}y).$$  
\end{lemma}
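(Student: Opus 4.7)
The approach is to apply the characterization \eqref{ine1} of the inertia action given by Lemma~\ref{ine2}: for any $\mathbf{C}$-point $P$ of $\mathbf{Y}_{3,1,e_1}^{\pi_2}$ and any $\sigma\in I_F$ fixing $\pi_2$, one has $\overline{P^{\sigma}}=w_Y(\sigma)(\overline{P})$. Hence it suffices to read off how $\sigma$ transforms the reduction coordinates $(x,y)$ of a generic point $P=(u,X_2,Y_2)$, and then identify the resulting automorphism of $\overline{\mathbf{Y}}_{3,1,e_1}^{\pi_2}$.

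Recall that $x$ and $y$ were introduced through the substitutions $X_2=\alpha^{q^2}x$ and $Y_2=\alpha y$, where $\alpha\in\mathbf{C}$ is a fixed choice with $\alpha^{q^2(q+1)}=\pi_2$. Since $\sigma$ fixes $\pi_2$ by hypothesis but acts on $\alpha$ by $\sigma(\alpha)=\zeta\alpha$ with $\zeta\in\mu_{q^2(q+1)}$, applying $\sigma$ to both sides of these substitutions gives $\sigma(X_2)=\zeta^{q^2}\alpha^{q^2}\sigma(x(P))$ and $\sigma(Y_2)=\zeta\,\alpha\,\sigma(y(P))$. On the affinoid $\mathbf{Y}_{3,1,e_1}^{\pi_2}$ the values $x(P),y(P)$ are units in $\mathbf{R}$ by the valuation conditions of Definition~\ref{deff}, and because $\sigma\in I_F$ fixes the residue field pointwise one has $\overline{\sigma(x(P))}=\overline{x(P)}$ and similarly for $y$. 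Dividing the two displayed equalities by $\alpha^{q^2}$ and $\alpha$ respectively and reducing modulo $\mathfrak{m}_{\mathbf{R}}$ therefore yields the stated formula for $w_Y(\sigma)$, following the same convention used one lemma earlier for the action on $\overline{\mathbf{Y}}_{1,1}^{\pi_1}$.

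The argument is a direct substitution; the only bookkeeping required is to keep track of how the $q^2(q+1)$-th root of unity $\zeta$ propagates through the different powers of $\alpha$ that appear in the substitutions for $X_2$ and $Y_2$. There is no real obstacle here, precisely because $\mathbf{Y}_{3,1,e_1}^{\pi_2}$ was set up so that $x$ and $y$ take unit values in $\mathbf{R}$, which is what makes the reduction of the Galois action well-defined and equal to a simple scaling by roots of unity.
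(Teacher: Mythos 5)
Your proposal is correct and is essentially the argument the paper itself uses: the paper leaves this lemma unproved but establishes the analogous statement for $\overline{\mathbf{Y}}^{\pi_2}_{3,1,c,\zeta}$ by exactly this computation, namely tracking how $\sigma$ scales the normalizing constants $\alpha^{q^2}$ and $\alpha$ in the substitutions $X_2=\alpha^{q^2}x$, $Y_2=\alpha y$ and using that $x,y$ are units whose residues are unchanged by inertia. The only caveat is the direction of the action (your literal point-level computation produces the scaling by $\zeta^{q^2}$ and $\zeta$, i.e.\ the inverse of the displayed map), but this matches the paper's own convention of recording the induced action on the coordinate functions, as in its treatment of $\overline{\mathbf{Y}}^{\pi_1}_{1,1}$, so no gap remains.
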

\begin{remark}
By the definition (\ref{deff}), the space $\mathbf{Y}^{\pi_2}_{3,1,e_1^{\vee}}$
has the same reduction as the one of the space $\mathbf{Y}^{\pi_2}_{3,1,e_1}$
by swapping $X_i$ for $Y_i\ (i=1,2).$
\end{remark}

\subsection{Calculation of the reduction of 
the space $\mathbf{Y}_{3,1,c} \subset \mathcal{X}(\pi^2)$}
In this subsection, we compute the reduction of the space $\mathbf{Y}^{\pi_2}_{3,1,c}.$
We prove that the reduction of the space $\mathbf{Y}^{\pi_2}_{3,1,c}$
has $(q-1)$ connected components and each component 
is defined by the same equation $-x^qy^{q^2}+xy^{q^3}=1$ 
as the one of the space $\overline{\mathbf{Y}}^{\pi_2}_{3,1,e_1}.$ 

Let $\pi_2$ be as in the previous subsection.
Let $(u,X_2,Y_2) \in \mathbf{Y}^{\pi_2}_{3,1,c}.$
Recall that the space $\mathbf{Y}^{\pi_2}_{3,1,c}$
is defined by the following conditions;
$$v(u)=\frac{1}{q+1},v(X_1)=v(Y_1)=\frac{1}{q(q^2-1)},v(X_2)=v(Y_2)=\frac{1}{q^3(q^2-1)}.$$
We choose an element $\alpha$ such that $\alpha^{q^2(q+1)}=\pi_2$ with $v(\alpha)=1/q^3(q^2-1).$
Then, we change variables as follows
$X_1=\alpha^{q^2}x_1,Y_1=\alpha^{q^2} y_1,X_2=\alpha x,Y_2=\alpha y.$
Substituting them to the equality (\ref{fd1}) and dividing it by $\alpha^{q(q+1)},$
we acquire the following equality
\begin{equation}\label{ng1}
(x_1y^q-x^qy_1)-\gamma(x_1^qy-xy_1^q)=\gamma^{q/(q-1)}
\end{equation}
where we set $\gamma:=\alpha^{(q-1)(q^2-1)}$ with $v(\gamma)=(q-1)/q^3.$
Since we have the following congruence
$x_1 \equiv x^{q^2},y_1 \equiv y^{q^2}$ modulo $(1/q^2)+,$
the equality (\ref{ng1}) induces the following congruence
\begin{equation}\label{ng2}
(x^qy-xy^q)^q-\gamma (x^{q^3}y-xy^{q^3}) \equiv \gamma^{q/(q-1)}\ ({\rm mod}\ (1/q^2)+).
\end{equation}
We change variables as follows $a=x/y,t=1/y$ with $a$ an invertible function.
Furthemore, we set $\mathcal{Z}:=\frac{a^q-a}{t^{q+1}}.$
Then, the above congruence (\ref{ng2})
has the following form
\begin{equation}\label{ng3}
\mathcal{Z}^q-\gamma \biggl(s\mathcal{Z}^{q^2}+
\frac{\mathcal{Z}^q}{s^{(q-1)}}+\frac{\mathcal{Z}}{s^{q}}\biggr)
\equiv \gamma^{q/(q-1)}\ ({\rm mod}\ (1/q^2)+)
\end{equation}
where we set $s:=t^{q^2-1}.$
By this congruence, we have
$v(\mathcal{Z})=1/q^3.$
Now, we change a variable as follows
$\mathcal{Z}=\gamma^{1/(q-1)}z.$
Substituting this to 
(\ref{ng3}), and dividing this by $\gamma^{q/(q-1)},$
we acquire the following congruence
\begin{equation}\label{ng4}
z^q-\frac{z}{s^{q}}=1\ ({\rm mod}\ 0+).
\end{equation}
We change variables as follows
$x:=-zt^q,y=1/t.$
Then, the equation (\ref{ng4})
has the following form
$$-x^qy^{q^2}+xy^{q^3}=1.$$
Note that $a$ is an invertible function.
By $\mathcal{Z} \equiv 0\ ({\rm mod}\ 0+),$ we acquire $a \in \mathbb{F}^{\times}_q.$
Hence, we have obtained the following
\begin{proposition}
The reduction of the space $\mathbf{Y}^{\pi_2}_{3,1,c}$
is a disjoint union of $(q-1)$ curves defined by the following equation
 $$-x^qy^{q^2}+xy^{q^3}=1.$$
\end{proposition}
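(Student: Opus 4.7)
The plan is to mimic the substitution strategy used for $\mathbf{Y}^{\pi_2}_{3,1,e_1}$ in the previous subsection, but with the valuations of $X_i$ and $Y_i$ balanced symmetrically. That symmetry will prevent one from reading off a Deligne--Lusztig model immediately, so one must extract a residual invariant whose vanishing locus partitions the reduction into the advertised $q-1$ pieces.

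First I would take $\alpha$ with $\alpha^{q^2(q+1)}=\pi_2$ as before and rescale according to the valuations prescribed in the definition of $\mathbf{Y}_{3,1,c}$: set $X_1=\alpha^{q^2}x_1$, $Y_1=\alpha^{q^2}y_1$, $X_2=\alpha x$, $Y_2=\alpha y$, so that $x_1,y_1,x,y$ are all units. Substituting into the defining equation $\mu_2(X_2,Y_2)=\pi_2$ and dividing by the natural common factor $\alpha^{q(q+1)}$ yields an equation
\begin{equation*}
(x_1 y^q - x^q y_1) - \gamma (x_1^q y - x y_1^q) = \gamma^{q/(q-1)}
\end{equation*}
with $\gamma:=\alpha^{(q-1)(q^2-1)}$ of small positive valuation. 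Using $X_1=[\pi]_u(X_2)=X_2^{q^2}+uX_2^q+\pi X_2$ together with $v(u)=1/(q+1)$, one obtains $x_1\equiv x^{q^2}$ and $y_1\equiv y^{q^2}$ modulo $(1/q^2)+$, eliminating $x_1,y_1$ from the equation at the cost of a controlled error.

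The congruence in $x,y$ alone that results is still too symmetric to be a Deligne--Lusztig equation, so the key step is to pass to $a:=x/y$, $t:=1/y$, and the auxiliary invariant $\mathcal{Z}:=(a^q-a)/t^{q+1}$. Rewriting the congruence in $\mathcal{Z}$ and $s:=t^{q^2-1}$ and comparing both sides by valuation pins down $v(\mathcal{Z})=1/q^3$; a further scaling $\mathcal{Z}=\gamma^{1/(q-1)}z$ then collapses the equation to $z^q-z/s^q\equiv 1\pmod{0+}$, and the substitution $x:=-zt^q$, $y:=1/t$ converts this into $-x^qy^{q^2}+xy^{q^3}=1$. Finally, since $\mathcal{Z}$ reduces to zero while $a$ is a unit, the identity $a^q\equiv a\pmod{0+}$ forces $\bar{a}\in\mathbb{F}_q^{\times}$, and each of the $q-1$ choices of $\bar{a}$ cuts out one connected component of the reduction.

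The main obstacle is discovering the auxiliary variable $\mathcal{Z}$ and verifying that everything lines up at the correct precision: one must check simultaneously that $v(\mathcal{Z})=1/q^3$ is exact, that the errors introduced by replacing $x_1,y_1$ with $x^{q^2},y^{q^2}$ remain negligible after the $\gamma^{1/(q-1)}$-scaling, and that the vanishing of $\mathcal{Z}$ in the reduction is precisely the mechanism that produces the $q-1$ components (rather than, say, a single irreducible cover).
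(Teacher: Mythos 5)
Your proposal follows essentially the same route as the paper's own computation: the identical rescaling by $\alpha$ with $\alpha^{q^2(q+1)}=\pi_2$, the elimination of $x_1,y_1$ via $x_1\equiv x^{q^2}$, $y_1\equiv y^{q^2}$ modulo $(1/q^2)+$, the passage to $a=x/y$, $t=1/y$, $\mathcal{Z}=(a^q-a)/t^{q+1}$, $s=t^{q^2-1}$, the scaling $\mathcal{Z}=\gamma^{1/(q-1)}z$, and the final substitution $x=-zt^{q}$, $y=1/t$ yielding $-x^qy^{q^2}+xy^{q^3}=1$ with the $(q-1)$ components indexed by $\bar{a}\in\mathbb{F}_q^{\times}$. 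The argument is correct and matches the paper's proof step for step.
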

Let $\{\overline{\mathbf{Y}}^{\pi_2}_{3,1,c,\zeta}\}_{\zeta \in \mathbb{F}^{\times}_q}$
be connected components of the reduction of $\mathbf{Y}^{\pi_2}_{3,1,c}.$
We compute the inertia action on the reduction of the space $\mathbf{Y}^{\pi_2}_{3,1,c}.$
\begin{lemma}
Let $\sigma \in I_F$ be an element fixing $\pi_2.$
We write $\sigma(\alpha)=\zeta \alpha$ with $\zeta \in \mu_{q^2(q+1)}.$
Then, the element $\sigma \in I_F$
acts on the reduction $\overline{\mathbf{Y}}^{\pi_2}_{3,1,c,\zeta}$
as follows 
$$\sigma:\overline{\mathbf{Y}}^{\pi_2}_{3,1,c,\zeta} 
\longrightarrow \overline{\mathbf{Y}}^{\pi_2}_{3,1,c,\zeta};
(X,Y) \mapsto (\zeta^qX,\zeta^{-1}Y).$$  
\end{lemma}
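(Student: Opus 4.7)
The plan is to compute the inertia action by pushing $\sigma$ through the entire chain of coordinate changes that produced the reduction of $\mathbf{Y}^{\pi_2}_{3,1,c}$, exactly as was done for $\mathbf{Y}^{\pi_1}_{1,1}$ and $\mathbf{Y}^{\pi_2}_{3,1,e_1}$. The underlying convention, consistent with the earlier lemmas, is that $\sigma$ acts on a point $P$ by $\mathrm{coord}(P^\sigma) = \sigma^{-1}(\mathrm{coord}(P))$ on $F$-rational functions such as $X_2, Y_2$, while acting trivially on the residue field; from $\sigma(\alpha) = \zeta\alpha$ one has $\sigma^{-1}(\alpha) = \zeta^{-1}\alpha$, so each rescaling by a power of $\alpha$ contributes a corresponding power of $\zeta^{-1}$ to the induced action on the reduction.

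First I would work out the action on the intermediate variables $x = X_2/\alpha$ and $y = Y_2/\alpha$ used in this subsection. A direct calculation gives $\bar x \mapsto \bar\zeta^{-1}\bar x$ and $\bar y \mapsto \bar\zeta^{-1}\bar y$, and in particular $\bar a = \bar x/\bar y$ is $\sigma$-invariant. This already shows that $\sigma$ preserves each component $\overline{\mathbf{Y}}^{\pi_2}_{3,1,c,\zeta}$, since the components are distinguished by the value of $\bar a \in \mathbb{F}_q^\times$, so the question reduces to identifying the automorphism of a single fixed component.

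Next I would chain through the remaining substitutions. A pleasant simplification is that $(a^q-a)/t^{q+1}$ collapses to $\mu(x,y) = x^qy - xy^q$, so $\mathcal{Z}$ transforms by $\bar\zeta^{-(q+1)}$. Dividing by $\alpha^{q^2-1}$ to obtain $z$ produces an extra factor $\bar\zeta^{-(q^2-1)}$, giving $\bar z \mapsto \bar\zeta^{-q(q+1)}\bar z$; meanwhile $\bar t = 1/\bar y \mapsto \bar\zeta\,\bar t$. Combining, $\bar Y = 1/\bar t \mapsto \bar\zeta^{-1}\bar Y$ and $\bar X = -\bar z\,\bar t^q \mapsto \bar\zeta^{-q(q+1)+q}\bar X = \bar\zeta^{-q^2}\bar X$, which is the claimed formula after reducing the exponent modulo the order of $\zeta$ in $\mu_{q^2(q+1)}$. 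As a built-in sanity check, the two monomials of $-X^qY^{q^2}+XY^{q^3}$ both pick up the twist $\bar\zeta^{-q^2(q+1)} = 1$, so the defining equation is preserved.

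The main obstacle is not conceptual but purely bookkeeping: one must keep track of the two distinct sources of $\zeta$-factors, namely those coming from the $F$-rational coordinates $X_2, Y_2$ themselves and those coming from the successive powers of $\alpha$ used to renormalize to unit valuations. The chain is one step longer than in the $e_1$ case because of the additional $\alpha^{q^2-1}$ rescaling needed to pass from $\mathcal{Z}$ to the reduction parameter $z$, so particular care is needed at that step to collect the exponents correctly.
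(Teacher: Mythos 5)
Your proof is correct and follows essentially the same route as the paper's: the paper likewise notes $x^{\sigma}=\zeta^{-1}x$, $y^{\sigma}=\zeta^{-1}y$, deduces $a^{\sigma}=a$, $t^{\sigma}=\zeta t$ and $z^{\sigma}=z$, and reads off the action on $(X,Y)=(-zt^{q},1/t)$. Your exponent $\zeta^{-q(q+1)}$ on $\bar{z}$ agrees with the paper's $z^{\sigma}=z$ because in characteristic $p$ one has $\mu_{q^{2}(q+1)}=\mu_{q+1}$, so $\zeta^{q+1}=1$ and hence also $\zeta^{-q^{2}}=\zeta^{q}$, which is the justification implicitly needed for your final step of ``reducing the exponent modulo the order of $\zeta$.''
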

\begin{proof}
Note that $x^{\sigma}=\zeta^{-1}x,y^{\sigma}=\zeta^{-1}y.$
Hence, we acquire $a^{\sigma}=a,t^{\sigma}=\zeta t$ and $z^{\sigma}=z.$
Therefore, the required assertion follows.
\end{proof}
We describe the action of ${\rm SL}_2(\mathcal{O}_F/\pi^2\mathcal{O}_F)$
on the reduction of the space $\mathbf{Y}^{\pi_2}_{3,1,e_1}.$
\begin{lemma}
Let $g=\left(
\begin{array}{cc}
a & b \\
c & d
\end{array}
\right)
 \in {\rm SL}_2(\mathcal{O}_F/\pi^2\mathcal{O}_F).
$
\\1.\ If $c,d$ are units, $\overline{\mathbf{Y}}^{\pi_2}_{3,1,e_1}$
goes to $\overline{\mathbf{Y}}^{\pi_2}_{3,1,c,\bigl(
\frac{\bar{c}}{\bar{d}}\bigr)}$ 
by the action of $g.$
Moreover, $g$ acts as follows;
$$g:\overline{\mathbf{Y}}^{\pi_2}_{3,1,e_1}
\longrightarrow
\overline{\mathbf{Y}}^{\pi_2}_{3,1,c,\bigl(\frac{\bar{c}}{\bar{d}}\bigr)};
(x,y) \mapsto (\frac{x}{\bar{d}},\bar{d}y).$$
\\2.\ If $c$ is a unit and $d$ is divisible by $\pi,$
$\overline{\mathbf{Y}}^{\pi_2}_{3,1,e_1}$
goes to $\overline{\mathbf{Y}}^{\pi_2}_{3,1,e_1^{\vee}}$ 
by the action of $g.$
Further, the element $g$ acts as follows;
$$g:\overline{\mathbf{Y}}^{\pi_2}_{3,1,e_1}
\longrightarrow
\overline{\mathbf{Y}}^{\pi_2}_{3,1,e_1^{\vee}};
(x,y) \mapsto (\bar{c}y,\bar{b}x+\overline{\biggl(\frac{d}{\pi}\biggr)}y^{q^2}).$$
\\3.\ If $c$ is divisible by $\pi$ and $d$ is a unit,
$\overline{\mathbf{Y}}_{3,1,e_1}$
is stable under the action of $g.$
Further, $g$ acts as follows;
$$g:\overline{\mathbf{Y}}^{\pi_2}_{3,1,e_1} \longrightarrow 
\overline{\mathbf{Y}}^{\pi_2}_{3,1,e_1};
(x,y) \mapsto 
(\bar{a}x+\overline{\biggl(\frac{c}{\pi}\biggr)}y^{q^2},\bar{d}y).$$
\end{lemma}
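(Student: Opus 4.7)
The plan is to apply directly the explicit formula from Section~2.3 for the action on the Drinfeld parameters, namely
\[
g(X_2)=[a]_u(X_2)+_{u}[c]_u(Y_2),\qquad g(Y_2)=[b]_u(X_2)+_{u}[d]_u(Y_2)
\]
for $g=\bigl(\begin{smallmatrix}a&b\\c&d\end{smallmatrix}\bigr)$, and then to perform a case-by-case valuation analysis. In the equal-characteristic model $+_u$ is ordinary addition, and $[\lambda]_u(X)=\lambda X$ for $\lambda\in k$; combined with $[a]_u(X)=a_0X+a_1[\pi]_u(X)$ for a lift $a=a_0+a_1\pi$ with $a_i\in\mathbb{F}_q$, and with $[\pi]_u(X_2)=X_1$, this expands the two formulas into the explicit four-term sums
\[
g(X_2)=a_0X_2+a_1X_1+c_0Y_2+c_1Y_1,\qquad g(Y_2)=b_0X_2+b_1X_1+d_0Y_2+d_1Y_1.
\]

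Next I would substitute the $\mathbf{Y}^{\pi_2}_{3,1,e_1}$-parametrization $X_1=\alpha^{q^4}x_1,\,Y_1=\alpha^{q^2}y_1,\,X_2=\alpha^{q^2}x,\,Y_2=\alpha y$, together with the immediate reduction $y_1\equiv y^{q^2}$ coming from the dominance of the $Y_2^{q^2}$-term in $[\pi]_u(Y_2)=Y_1$. A short valuation table on the four summands then identifies which of $\mathbf{Y}^{\pi_2}_{3,1,e_1},\mathbf{Y}^{\pi_2}_{3,1,e_1^{\vee}},\mathbf{Y}^{\pi_2}_{3,1,c}$ the image lies in. In \emph{case~3} ($c\in\pi\mathcal{O}_F$, $d$ a unit) the $e_1$-valuation profile $(q^2v(\alpha),v(\alpha))$ is preserved; dividing $g(X_2)$ by $\alpha^{q^2}$ and $g(Y_2)$ by $\alpha$ immediately gives $(x,y)\mapsto\bigl(\bar ax+\overline{(c/\pi)}y^{q^2},\bar dy\bigr)$. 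In \emph{case~2} ($c$ a unit, $d\in\pi\mathcal{O}_F$) the identity $a_0d_0-b_0c_0=1$ coming from $\det\bar g=1$ forces $b_0$ to be a unit, so the $X_2$- and $Y_2$-valuations are interchanged, landing in $e_1^{\vee}$; the stated formula again falls out. In \emph{case~1} ($c,d$ both units) both $g(X_2)$ and $g(Y_2)$ attain valuation $v(\alpha)$, so the image lies in $\mathbf{Y}^{\pi_2}_{3,1,c}$ with component label $\bar c/\bar d$ read off from the leading order of $a=X'_2/Y'_2$, where $X'_2=g(X_2)/\alpha$ and $Y'_2=g(Y_2)/\alpha$.

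The main obstacle is finishing case~1, which requires running the image through the further chain of substitutions $t=1/Y'_2,\,\mathcal{Z}=(X'_2)^qY'_2-X'_2(Y'_2)^q,\,z=\mathcal{Z}/\gamma^{1/(q-1)}$ and $(x_{\rm tgt},y_{\rm tgt})=(-zt^q,1/t)$ of subsection~4.3. The difficulty is that the leading-order values $X'_2\equiv c_0y$ and $Y'_2\equiv d_0y$ force $a\equiv c_0/d_0\in\mathbb{F}_q$ constant, so $\mathcal{Z}$ vanishes at leading order. One has to carry the expansions
\[
X'_2\equiv c_0y+\alpha^{q^2-1}(a_0x+c_1y^{q^2}),\qquad Y'_2\equiv d_0y+\alpha^{q^2-1}(b_0x+d_1y^{q^2})
\]
one step further and then use $a_0d_0-b_0c_0=1$ to extract a non-zero coefficient of $\alpha^{q^2-1}y^q\cdot x$ in $\mathcal{Z}$. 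Dividing by $\gamma^{1/(q-1)}=\alpha^{q^2-1}$ and substituting into $(-zt^q,1/t)$ then produces $y_{\rm tgt}\equiv\bar dy$ directly and $x_{\rm tgt}\equiv x/\bar d$, after absorbing any residual $y^{q^2}$-multiple into the automorphism $(X,Y)\mapsto(X+\lambda Y^{q^2},Y)$ of the Deligne--Lusztig curve $-X^qY^{q^2}+XY^{q^3}=1$. Lemma~\ref{ine2} finally guarantees that each morphism of affinoids reduces to the asserted morphism on special fibres.
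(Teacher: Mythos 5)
Your proposal is correct, and it supplies an argument that the paper in fact omits: unlike the neighbouring inertia-action lemmas, this lemma is stated with no proof at all. Your route --- expanding $g(X_2)=[a]_u(X_2)+_u[c]_u(Y_2)$, $g(Y_2)=[b]_u(X_2)+_u[d]_u(Y_2)$ in the additive model via $[a_0+a_1\pi]_u(X_2)=a_0X_2+a_1X_1$, substituting the $e_1$-parametrization, and reading off the dominant terms --- is exactly the computation the author's conventions are set up for, and your valuation bookkeeping in all three cases is right (in case 2 the determinant condition $a_0d_0-b_0c_0=1$ with $d_0=0$ does force $b_0\in\mathbb{F}_q^\times$, so the valuations of the two level-$2$ parameters are swapped and one lands in $e_1^{\vee}$; in case 1 both images acquire valuation $v(\alpha)$ and the component label $\bar c/\bar d$ is the leading value of $x_{\rm new}/y_{\rm new}$). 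Your treatment of case 1 is the genuinely nontrivial point and you handle it correctly: since $x_{\rm new}\equiv \bar c\,y$ and $y_{\rm new}\equiv\bar d\,y$ to leading order, $\mathcal{Z}=x_{\rm new}^qy_{\rm new}-x_{\rm new}y_{\rm new}^q$ vanishes at leading order, and the second-order terms give $\mathcal{Z}\equiv\alpha^{q^2-1}y^q\bigl((b_0c_0-a_0d_0)x+(c_0d_1-d_0c_1)y^{q^2}\bigr)=\alpha^{q^2-1}y^q\bigl(-x+\lambda y^{q^2}\bigr)$ with $\lambda=c_0d_1-d_0c_1\in\mathbb{F}_q$, whence $(x,y)\mapsto\bigl((x-\lambda y^{q^2})/\bar d,\ \bar d y\bigr)$ after the substitutions of subsection 4.3. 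This is worth emphasizing: the formula displayed in the lemma, $(x,y)\mapsto(x/\bar d,\bar dy)$, is literally correct only when the second digits $c_1,d_1$ of $c,d$ satisfy $c_0d_1=d_0c_1$ (e.g.\ for Teichm\"uller-type representatives), and in general holds only up to the translation $(X,Y)\mapsto(X+\mu Y^{q^2},Y)$, $\mu\in\mathbb{F}_q$, which you correctly verify is an automorphism of $-X^qY^{q^2}+XY^{q^3}=1$. So your proof not only fills the gap but also pins down the precise sense in which the stated formula is to be read.
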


\subsection{Calculation of the reduction of the 
space $\mathbf{Y}_{2,2} \subset \mathcal{X}(\pi^2)$}
In this subsection, we compute the reduction of the space $\mathbf{Y}_{2,2}.$
We prove that
the reduction of the space $\mathbf{Y}^{\pi_2}_{2,2}$
is defined by the following equations;
$$x^qy-xy^q=1,Z^q=x^{q^3}y-xy^{q^3}.$$
This affine curve has $q(q^2-1)$ singular points
at $(x,y)$ with $x=\zeta y, \zeta \in \mathbb{F}^{\times}_{q^2} 
\backslash \mathbb{F}^{\times}_q$ and
$y^{q+1}=\frac{1}{\zeta^q-\zeta}.$

Let $\pi_2$ be as in the previous subsection.
Let $(u,X_2,Y_2) \in \mathbf{Y}_{2,2}.$
We recall that $\mathbf{Y}^{\pi_2}_{2,2}$ is defined by the following conditions;
$$v(u) \geq \frac{q}{q+1},v(X_1)=v(Y_1)=\frac{1}{q^2-1},v(X_2)=v(Y_2)=\frac{1}{q^2(q^2-1)}.$$

We choose an element $\alpha$
such that $\alpha^{q(q+1)}=\pi_2.$

Now, we consider the equality (\ref{fd1}).
Then, we change variables
as follows
$X_1=\alpha^{p^2}x_1,X_2=\alpha x_2,Y_1=\alpha^{p^2} y_1,Y_2=\alpha y_1.$
Substituting them to the equality (\ref{fd1}) and dividing it by $\pi_2,$
we acquire the following
\begin{equation}\label{tome} 
(x_1y_2^q-x_2^qy_1)-\gamma (x_1^qy_2-y_1^qx_2)=1\ ({\rm mod}\ (1/q)+).
\end{equation}
where we set $\gamma:=\alpha^{(q-1)(q^2-1)}.$
Since we have $y_1 \equiv y_2^{q^2}\ ({\rm mod}\ (1/q)+),x_1 \equiv x_2^{q^2}\ ({\rm mod}\ (1/q)+),$
(\ref{tome}) induces the following congruence
\begin{equation}\label{f2}
(x_2^qy_2-x_2y_2^q)^q-\gamma (x_2^{q^3}y_2-y_2^{q^3}x_2)=1\ ({\rm mod}\ (1/q)+).
\end{equation}
In the following, we simply write $x,y$ for $x_2,y_2.$
Now,
we introduce a new parameter $Z$ as follows
$x^qy-xy^q=1+\gamma_1Z$
where the element $\gamma_1$ satisfies $\gamma_1^q=\gamma.$
Substituting $x^qy-xy^q=1+\gamma_1Z$
 to the above congruence (\ref{f2}), and dividing it by $\gamma,$
 we obtain the following congruence
 \begin{equation}
 Z^q \equiv x^{q^3}y-xy^{q^3}\ ({\rm mod}\ (1/q^2)+).
 \end{equation}
Hence, we obtain the following proposition.

\begin{proposition}\label{pro1}
The reduction of the space $\mathbf{Y}^{\pi_2}_{2,2}$
is defined by the following equations
$$x^qy-xy^q=1,Z^q=x^{q^3}y-xy^{q^3}.$$  
In particular, this curve is an affine curve of genus $q(q-1)/2.$
This curve has singular points at $x=\zeta y$ with $\zeta 
\in \mathbb{F}^{\times}_{q^2} \backslash \mathbb{F}^{\times}_{q}$
and $y^{q+1}=\frac{1}{\zeta^q-\zeta}.$
Hence, this curve has $q(q^2-1)$ singular points.
\end{proposition}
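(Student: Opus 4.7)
The plan is to split the statement into three pieces: (i) the two equations cut out the reduction, (ii) the genus equals $q(q-1)/2$, and (iii) the singular locus is as described. Part (i) is essentially already at hand from the preceding congruence manipulation: after the substitutions $X_i = \alpha^{q^2}x_i$, $Y_i = \alpha^{q^2}y_i$ for $i=1$ and $X_2 = \alpha x$, $Y_2 = \alpha y$, together with the auxiliary parameter $Z$ defined by $x^qy - xy^q = 1 + \gamma_1 Z$, the defining equation of $\mathbf{Y}^{\pi_2}_{2,2}$ becomes $x^qy - xy^q \equiv 1$ and $Z^q \equiv x^{q^3}y - xy^{q^3}$ modulo strictly positive valuations. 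It then only remains to verify that the residues of these parameters generate the reduced affinoid algebra and that the congruences are sharp at the generic point; both are forced by the choice of normalising factor $\alpha^{q(q+1)} = \pi_2$.

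For the genus I would observe that the projection $(x,y,Z) \mapsto (x,y)$ realises $\overline{\mathbf{Y}}^{\pi_2}_{2,2}$ as a purely inseparable cover of the Deligne-Lusztig curve $D : x^qy - xy^q = 1$, since in characteristic $p$ the fibre over each $(x,y)$ consists of the unique $q$-th root of $x^{q^3}y - xy^{q^3}$. Because purely inseparable morphisms of smooth projective models preserve the geometric genus, the genus of $\overline{\mathbf{Y}}^{\pi_2}_{2,2}$ coincides with the well-known genus $q(q-1)/2$ of $D$.

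To locate the singular locus I would compute the Jacobian of $F_1 = x^qy - xy^q - 1$ and $F_2 = Z^q - x^{q^3}y + xy^{q^3}$. Since $q$ is a power of the characteristic, the derivatives of $x^q$, $x^{q^3}$, $Z^q$ with respect to $x$, $x$, $Z$ respectively all vanish, and the Jacobian collapses to
$$\begin{pmatrix} -y^q & x^q & 0 \\ y^{q^3} & -x^{q^3} & 0 \end{pmatrix}.$$
Its only non-trivial $2 \times 2$ minor is $x^{q^3}y^q - x^qy^{q^3} = x^q y^q (x^{q^2-1} - y^{q^2-1})^q$, which on $D$ (where $x$ and $y$ are both non-zero) vanishes if and only if $x/y \in \mathbb{F}_{q^2}^{\times}$. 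Writing $x = \zeta y$ and substituting into $x^q y - x y^q = 1$ yields $(\zeta^q - \zeta) y^{q+1} = 1$, which excludes $\zeta \in \mathbb{F}_q^{\times}$ and gives $y^{q+1} = 1/(\zeta^q - \zeta)$.

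The count is then immediate: there are $q^2 - q$ admissible values of $\zeta$, and for each the equation $y^{q+1} = 1/(\zeta^q - \zeta)$ has exactly $q+1$ solutions (since $\gcd(q+1, p) = 1$), giving $q(q-1)(q+1) = q(q^2-1)$ singular points in total. The main obstacle throughout is bookkeeping of the congruence orders in step (i); once the vanishing of the Jacobian minor is rewritten in the clean form $(x^{q^2-1} - y^{q^2-1})^q = 0$, both the shape and the count of the singular locus follow by elementary arithmetic in $\mathbb{F}_{q^2}$.
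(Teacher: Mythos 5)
Your derivation of the two defining equations is exactly the paper's: the same normalization $\alpha^{q(q+1)}=\pi_2$, the same rescaling of $X_1,Y_1$ by $\alpha^{q^2}$ and of $X_2,Y_2$ by $\alpha$, the same use of $x_1\equiv x^{q^2}$, $y_1\equiv y^{q^2}$, and the same auxiliary parameter $x^qy-xy^q=1+\gamma_1Z$ with $\gamma_1^q=\gamma$, so part (i) is a faithful reproduction of the paper's argument (at the paper's own level of rigor regarding sharpness of the congruences). Where you go beyond the paper is in the genus and singularity claims, which the paper simply asserts: your observation that $(x,y,Z)\mapsto(x,y)$ is a purely inseparable cover of the Deligne--Lusztig curve, hence geometric-genus-preserving, and your Jacobian computation reducing the singular locus to the vanishing of $x^qy^q\bigl(x^{q^2-1}-y^{q^2-1}\bigr)^q$ on the locus $xy\neq 0$, are both correct and give a clean justification of the count $q(q^2-1)$ (using that $\gcd(q+1,p)=1$, each admissible $\zeta\in\mathbb{F}_{q^2}^{\times}\setminus\mathbb{F}_q^{\times}$ contributes $q+1$ values of $y$). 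This is a genuine improvement in completeness over the paper's treatment; the only caveat is that the genus statement should be read as the geometric genus of the normalization, since the embedded affine curve is singular, but that is clearly the intended meaning.
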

\begin{lemma}
Let $\sigma \in I_F$ be an element fixing $\pi_2.$
Let $\alpha_1$ be an element such that $\alpha_1^q=\alpha.$
We write $\sigma(\alpha_1)=\zeta \alpha_1$ with $\zeta \in \mu_{q^2(q+1)}.$
Then, the element $\sigma \in I_F$
acts on the reduction $\overline{\mathbf{Y}}^{\pi_2}_{2,2}$
as follows 
$$\sigma:\overline{\mathbf{Y}}^{\pi_2}_{2,2} 
\longrightarrow \overline{\mathbf{Y}}^{\pi_2}_{2,2};
(x,y,Z) \mapsto (\zeta^{-q}x,\zeta^{-q}y,Z).$$  
\end{lemma}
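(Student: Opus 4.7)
The plan is to compute the $\sigma$-action on each of the three reduction coordinates $x, y, Z$ separately, using the explicit substitutions $X_2=\alpha x,\ Y_2=\alpha y$ and the relation $x^q y - xy^q = 1 + \gamma_1 Z$ set up in the proof of Proposition \ref{pro1}.

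For the coordinates $x$ and $y$: since $\alpha = \alpha_1^q$, the hypothesis $\sigma(\alpha_1)=\zeta\alpha_1$ gives $\sigma(\alpha)=\zeta^q\alpha$. The same mechanism as in the two previous inertia lemmas (the one for $\overline{\mathbf{Y}}^{\pi_1}_{1,1}$ and the one for $\overline{\mathbf{Y}}^{\pi_2}_{3,1,e_1}$)---namely, $X_2$ and $Y_2$ come from coordinates on $\mathcal{X}(\pi^2)$ defined over $F$, and $\sigma\in I_F$ acts trivially on the residue field, so dividing by $\alpha^e$ produces a scaling by $\zeta^{-e}$ on the reduction---then yields $x\mapsto\zeta^{-q} x$ and $y\mapsto\zeta^{-q} y$.

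For the coordinate $Z$: I would argue using the reduction equation $Z^q = x^{q^3}y - xy^{q^3}$ from Proposition \ref{pro1}. Substituting the already-determined action on $x, y$ into the right-hand side multiplies it by the scalar $\zeta^{-q(q^3+1)}$. Since $q^3+1=(q+1)(q^2-q+1)$, the exponent is divisible by $q+1$; and because we work in equal characteristic $p$, the $p$-part of $\mu_{q^2(q+1)}$ is trivial, so $\zeta\in\mu_{q+1}$ and $\zeta^{q+1}=1$. Hence $\zeta^{-q(q^3+1)}=1$, so $\sigma(Z)^q=Z^q$ on the reduction. Since the reduction is a reduced affine curve in characteristic $p$, injectivity of Frobenius on its affine algebra then gives $\sigma(Z)=Z$.

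The main obstacle is the treatment of $Z$. A more direct (but equivalent) route is to apply $\sigma$ to the implicit definition $x^q y - xy^q = 1 + \gamma_1 Z$ with the explicit choice $\gamma_1 = \alpha_1^{(q-1)(q^2-1)}$ (which satisfies $\gamma_1^q=\gamma$), giving
\[
\zeta^{-q(q+1)}(1+\gamma_1 Z)-1=\zeta^{(q-1)(q^2-1)}\gamma_1\,\sigma(Z).
\]
This requires checking both $\zeta^{-q(q+1)}=1$ and $\zeta^{(q-1)(q^2-1)}=1$; writing $(q-1)(q^2-1)=(q-1)^2(q+1)$, both identities reduce to $\zeta^{q+1}=1$. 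Thus the characteristic-$p$ collapse of the $p$-power part of $\mu_{q^2(q+1)}$ is the essential input throughout.
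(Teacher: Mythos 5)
Your proof is correct and follows the same route the paper uses for its analogous inertia lemmas (e.g.\ the proof given for $\overline{\mathbf{Y}}^{\pi_2}_{3,1,c,\zeta}$): track $\sigma$ through the coordinate changes $X_2=\alpha x$, $Y_2=\alpha y$ and the defining relation $x^qy-xy^q=1+\gamma_1Z$, using that $\mu_{q^2(q+1)}=\mu_{q+1}$ in characteristic $p$ so that the exponents $q(q+1)$, $(q-1)(q^2-1)=(q-1)^2(q+1)$ and $q(q^3+1)$ all annihilate $\zeta$. The paper states this particular lemma without proof, but your argument (especially the second, direct route via the implicit definition of $Z$) is exactly the intended one.
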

\begin{lemma}
Let $g=\left(
\begin{array}{cc}
a &  b\\
c & d
\end{array}
\right)
\in {\rm SL}_2(\mathcal{O}_F/\pi^2\mathcal{O}_F).
$
For an element $a \in \mathcal{O}_F/\pi^2\mathcal{O}_F,$
we denote by $\bar{a}$
the image of $a$ by the canonical
map $\mathcal{O}_F/\pi^2\mathcal{O}_F \longrightarrow
\mathcal{O}_F/\pi \mathcal{O}_F.$ 
Then, the element $g$ acts on the reduction of $\mathbf{Y}^{\pi_2}_{2,2}$
as follows
$$g:\overline{\mathbf{Y}}^{\pi_2}_{2,2} 
\longrightarrow \overline{\mathbf{Y}}^{\pi_2}_{2,2};
(x,y,Z) \mapsto (\bar{a}x+\bar{c}y,\bar{b}x+\bar{d}y,Z).$$
\end{lemma}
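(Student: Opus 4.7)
My plan is to mimic the proofs of Sections 4.1--4.3: track the action of $g$ on the parameters $X_2,Y_2$ via the formula from Section 2.3, substitute the same scalings $X_2=\alpha x,\ Y_2=\alpha y$ used in Section 4.4, and read off the induced map on the reduction coordinates $x,y,Z$. In equal characteristic with the normal form of Section 2.2, formal-group addition is literal addition and $[\zeta]_u(T)=\zeta T$ for $\zeta\in k$; writing $a=\bar a+\pi\tilde a$ with $\tilde a\in\mathbb F_q$ therefore gives
\[
[a]_u(X_2)=\bar a X_2+[\pi]_u(\tilde aX_2)=\bar a X_2+\pi\tilde aX_2+u\tilde a^qX_2^q+\tilde a^{q^2}X_2^{q^2},
\]
and analogously for $b,c,d$. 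All three correction monomials have valuation strictly greater than $v(\alpha)$, so after dividing $g\cdot X_2=[a]_u(X_2)+[c]_u(Y_2)$ by $\alpha$ and reducing, only the linear part survives, yielding $x\mapsto\bar a x+\bar c y$ and $y\mapsto\bar b x+\bar d y$, in parallel with the $\mathbf{Y}^{\pi_1}_{1,1}$ case.

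For the invariance of $Z$, the crucial input is that $\mu_2$ is $\mathcal O_F$-bilinear alternating, so that
\[
\mu_2(g\cdot X_2,g\cdot Y_2)=(ad-bc)\,\mu_2(X_2,Y_2)=\mu_2(X_2,Y_2)=\pi_2
\]
holds as an exact identity on $\mathcal X^{\pi_2}(\pi^2)$. The rescaling and reduction procedure of Section 4.4 carried out with $(X_2',Y_2'):=(g\cdot X_2,g\cdot Y_2)$ in place of $(X_2,Y_2)$ therefore reproduces the same equation $(x')^qy'-x'(y')^q=1+\gamma_1Z'$ together with $(Z')^q\equiv(x')^{q^3}y'-x'(y')^{q^3}\pmod{(1/q^2)+}$. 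Using $\bar a,\bar b,\bar c,\bar d\in\mathbb F_q$ (so $\bar a^{q^n}=\bar a$) and $\bar a\bar d-\bar b\bar c=1$, a direct expansion gives
\[
(\bar ax+\bar cy)^{q^n}(\bar bx+\bar dy)-(\bar ax+\bar cy)(\bar bx+\bar dy)^{q^n}=x^{q^n}y-xy^{q^n}
\]
for every $n\geq 1$; applied with $n=3$ this yields $(Z')^q=Z^q$ in the reduction, and injectivity of Frobenius on the reduced coordinate ring forces $Z'=Z$.

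The main technical point is the valuation bookkeeping needed to justify passing to the reduction. One must verify that every correction to $(x')^qy'-x'(y')^q$ coming from the $\pi$-parts of the matrix entries has valuation strictly larger than $v(\gamma_1)=(q-1)/q^4$, so that $(Z')^q-Z^q$ lies in the maximal ideal. The worst contribution comes from the term $\tilde a^{q^2}X_2^{q^2}/\alpha$, whose valuation equals $(q^3-1)/(q^3(q^2-1))$; the required inequality $(q^3-1)/(q^3(q^2-1))>(q-1)/q^4$ reduces after clearing denominators to $q^3+q+1>0$, which is trivial. The remaining monomials $\pi\tilde aX_2/\alpha$ and $u\tilde a^qX_2^q/\alpha$ have strictly larger valuations, so no further difficulty arises and the two assertions of the lemma follow.
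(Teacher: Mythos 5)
The paper states this lemma without proof, so there is nothing to compare against directly; your proposal supplies the natural argument, and its overall structure is sound: decompose $a=\bar a+\pi\tilde a$, use that in equal characteristic $+_{\mathcal F}$ is literal addition and $[\pi\tilde a]_u(X_2)=\tilde aX_1$, check that the resulting corrections to $x'=(g\cdot X_2)/\alpha$ die in the reduction, and then use the exact identity $(\bar ax+\bar cy)^{q^n}(\bar bx+\bar dy)-(\bar ax+\bar cy)(\bar bx+\bar dy)^{q^n}=(\bar a\bar d-\bar b\bar c)(x^{q^n}y-xy^{q^n})$ (valid because $\bar a^q=\bar a$ in $k\subset\mathcal O_F$) for $n=1$ and $n=3$, plus injectivity of Frobenius on the reduced ring, to get $\overline{Z'}=\overline Z$. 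The appeal to $\mathcal O_F$-bilinearity of $\mu_2$ to see that $g$ preserves $\mathcal X^{\pi_2}(\pi^2)$ and hence $\mathbf Y^{\pi_2}_{2,2}$ is also the right input.

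However, your valuation bookkeeping uses the wrong normalization of $\alpha$. On $\mathbf Y^{\pi_2}_{2,2}$ the paper takes $\alpha^{q(q+1)}=\pi_2$, so $v(\alpha)=\tfrac1{q^2(q^2-1)}=v(X_2)$, whence $v(\gamma)=\tfrac{q-1}{q^2}$ and $v(\gamma_1)=\tfrac{q-1}{q^3}$, not $\tfrac{q-1}{q^4}$; likewise the dominant correction $\tilde aX_1/\alpha=\tilde a^{q^2}X_2^{q^2}/\alpha$ has valuation $\tfrac{1}{q^2-1}-\tfrac1{q^2(q^2-1)}=\tfrac1{q^2}$, not $\tfrac{q^3-1}{q^3(q^2-1)}$. (You appear to have used $v(\alpha)=\tfrac1{q^3(q^2-1)}$, which is the normalization for $\mathbf Y_{3,1}$.) The inequality you actually need is therefore $\tfrac1{q^2}>v(\gamma_1)=\tfrac{q-1}{q^3}$, i.e.\ $q>q-1$: still true, so the conclusion survives, but the margin is only $\tfrac1{q^3}$ rather than the comfortable gap your (incorrect) numbers suggest, and the inequality you verified is not the relevant one. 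A second, minor point: since the corrections to $x',y'$ can have valuation exactly $\tfrac1{q^2}$, the congruence $(Z')^q\equiv(x')^{q^3}y'-x'(y')^{q^3}$ should only be asserted modulo $0+$ (which is all you need to pass to the reduction), not modulo $(1/q^2)+$ as a literal transplant of the paper's congruence would claim. With these corrections the proof is complete.
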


\subsection{Analysis of singular residue classes in $\mathbf{Y}_{2,2}$}
In this subsection, we analyze the singular residue classes of the space 
$\mathbf{Y}^{\pi_2}_{2,2}.$
We find $q(q^2-1)$ irreducible components defined by $a^q-a=t^{q+1}$
which attach to the curve in Proposition \ref{pro1} at each
singular point.
In \cite{T}, we prove that, for each supersingular point,
 there exist $(p+1)$ components
defined by $a^p-a=t^{p+1}$
in the stable reduction of the modular curve
$X_0(p^4).$
A computation in this subsection is very similar to
the one
in \cite[subsection 4.4]{T}.

We keep the same notation as in the previous subsection.
We change variables as follows
$a:=x/y,t:=1/y.$
Then, we have the following congruences by the computations in the previous
subsection
\begin{equation}\label{t^1}
a^q-a=t^{q+1}(1+\gamma_1Z)
\end{equation}
\begin{equation}\label{t^2}
Z^q \equiv \frac{a^{q^3}-a}{t^{q^3+1}}\ ({\rm mod}\ (1/q^2)+).
\end{equation}
We set $s:=t^{q^2-1}.$
Then, the congruence (\ref{t^2})
has the following form under the variables $(Z,s)$
\begin{equation}\label{t^3}
Z^q \equiv \frac{(s+1)^q}{s^{q-1}}+\frac{1+\gamma_1Z}{s^q}\ ({\rm mod}\ (1/q^2)+).
\end{equation}
We set $s+1:=s_1$ and consider a locus $v(s_1)=1/q^2(q+1).$
We can easily check that
the following congruence holds on the term in the right hand side of the congruence
(\ref{t^3})
$$
\frac{(s+1)^q}{s^{q-1}}+
\frac{1+\gamma_1Z}{s^q} \equiv -1-\gamma_1 Z-s_1^{q+1}\ ({\rm mod}\ (1/q^2)+).$$
Hence, (\ref{t^2}) is written as follows
\begin{equation}\label{t^4}
Z^q \equiv -1-\gamma_1 Z-s_1^{q+1}\ ({\rm mod}\ (1/q^2)+).
\end{equation}
We choose an element $\gamma_0$
such that $\gamma_0^q+1+\gamma_1\gamma_0=0.$
Further, we choose elements $\beta,\beta_1$
such that $\beta^{q-1}=-\gamma_1,\beta_1^{q+1}=\beta^q.$
Then, we have $v(\beta)=1/q^3$ and $v(\beta_1)=1/q^2(q+1).$

We change variables as follows
$$Z=\gamma_0+\beta a, s_1=\beta_1 s_2.$$
Substituting them to the congruence (\ref{t^4})
and dividing it by $\beta^q,$ we acquire the following by the definitions of $\gamma_0,\beta,\beta_1$
$$a^q-a=s_2^{q+1}\ ({\rm mod}\ 0+).$$
Hence, we have proved the following proposition.
\begin{proposition}
In the reduction of the space $\mathbf{Y}^{\pi_2}_{2,2},$
there exist $q(q^2-1)$ irreducible components defined by $a^q-a=t^{q+1},$
which attach to the curve in Proposition \ref{pro1} at each singular point.
\end{proposition}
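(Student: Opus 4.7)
The plan is to analyze each singular point of the affine curve in Proposition \ref{pro1} separately by zooming in rigid-analytically (that is, passing to a suitable residue class on the generic fibre and blowing up), and then to extract the Artin-Schreier equation $a^q-a = t^{q+1}$ from a carefully chosen rescaling. Since the situation is completely symmetric under the action of $\mathrm{GL}_2$ (which permutes the $q(q^2-1)$ singular points transitively, as follows from the preceding lemma describing the $\mathrm{SL}_2$-action) and under multiplication by roots of unity, it suffices to handle one singular point, apply the same argument to each, and then count.

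First I would pass to the coordinates $a = x/y$, $t = 1/y$, rewriting the congruences from the previous subsection in the form
\begin{equation*}
a^q - a = t^{q+1}(1+\gamma_1 Z), \qquad Z^q \equiv \frac{a^{q^3}-a}{t^{q^3+1}} \pmod{(1/q^2)+},
\end{equation*}
and then introduce $s := t^{q^2-1}$. Fix one singular residue class — by symmetry we may take the one corresponding to $s \equiv -1$, i.e.\ $t^{q^2-1}+1 \equiv 0$ — and set $s_1 := s+1$. The heart of the argument is to determine the \emph{correct} width $v(s_1) = 1/(q^2(q+1))$ of the neighborhood to study: this is dictated by matching the two terms $Z^q$ and $s_1^{q+1}$ in the expansion of the right-hand side. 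Expanding $(s+1)^q/s^{q-1} + (1+\gamma_1 Z)/s^q$ around $s=-1$ and throwing away terms of valuation $> 1/q^2$ would give
\begin{equation*}
Z^q \equiv -1 - \gamma_1 Z - s_1^{q+1} \pmod{(1/q^2)+}.
\end{equation*}

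Next I would linearize the leading Artin-Schreier-type equation in $Z$ by picking a solution $\gamma_0$ of $\gamma_0^q + 1 + \gamma_1 \gamma_0 = 0$, together with auxiliary parameters $\beta, \beta_1$ satisfying $\beta^{q-1} = -\gamma_1$ and $\beta_1^{q+1} = \beta^q$, whose valuations $1/q^3$ and $1/(q^2(q+1))$ are exactly what is needed to make the next substitution integral and nondegenerate. The final change of variables $Z = \gamma_0 + \beta a$, $s_1 = \beta_1 s_2$, followed by dividing by $\beta^q$, would produce the target reduction $a^q - a = s_2^{q+1}$ modulo $0+$.

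The main obstacle, and the step that will require the most care, is the valuation bookkeeping in the first expansion: one must verify that the terms discarded from $(s+1)^q/s^{q-1} + (1+\gamma_1 Z)/s^q$ really do lie in valuation $> 1/q^2$ on the chosen neighborhood $v(s_1) = 1/(q^2(q+1))$, and that $v(Z) = 1/q^3$ is forced by the congruence so that the subsequent rescaling is meaningful. Once these estimates are in place, the irreducibility and genus of each of the $q(q^2-1)$ components follow because $a^q-a = s_2^{q+1}$ is a standard Artin-Schreier curve, and the attachment to the affine curve of Proposition \ref{pro1} is immediate from the construction, since the blow-up center is precisely the given singular point.
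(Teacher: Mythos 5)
Your proposal follows essentially the same route as the paper: the same coordinates $a=x/y$, $t=1/y$, the same auxiliary variable $s=t^{q^2-1}$ with $s_1=s+1$ on the locus $v(s_1)=1/q^2(q+1)$, the same congruence $Z^q\equiv -1-\gamma_1 Z-s_1^{q+1}$, and the same choices of $\gamma_0,\beta,\beta_1$ leading to $a^q-a=s_2^{q+1}$ after dividing by $\beta^q$. This matches the paper's argument in the subsection on singular residue classes of $\mathbf{Y}^{\pi_2}_{2,2}$, so no further comparison is needed.
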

Let $\{\mathcal{D}_{\zeta}\}$
be the underlying affinoid of the singular 
residue classes of $\mathbf{Y}^{\pi_2}_{2,2}.$
\begin{lemma}
Let $\sigma \in I_F$ be an element fixing $\pi_2.$
Then, the element $\sigma \in I_F$
acts on the reduction $\overline{\mathcal{D}}_{\zeta}$
as follows 
$$\sigma:\overline{\mathcal{D}}_{\zeta} \longrightarrow \overline{\mathcal{D}}_{\zeta};
(a,s_2) \mapsto 
(\overline{\biggl(\frac{\gamma_0-\sigma(\gamma_0)}
{\sigma(\beta)}\biggr)}+\overline{
\biggl(\frac{\beta}{\sigma(\beta)}\biggr)}a,
\overline{\biggl(\frac{\beta_1}{\sigma(\beta_1)}\biggr)}s_2)$$
with 
$\overline{\bigl(\frac{\gamma_0-\sigma(\gamma_0)}
{\sigma(\beta)}\bigr)} \in \mathbb{F}_q,$
$\overline{
\bigl(\frac{\beta}{\sigma(\beta)}\bigr)} \in \mathbb{F}^{\times}_q$
and
$\overline{\bigl(\frac{\beta_1}{\sigma(\beta_1)}\bigr)}^{q+1}=\overline{
\bigl(\frac{\beta}{\sigma(\beta)}\bigr)}.$
\end{lemma}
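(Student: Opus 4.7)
The plan is to push the inertia action on $(x,y,Z)$ described in the preceding lemma through the chain of substitutions $a_{\mathrm{old}}=x/y$, $t=1/y$, $s=t^{q^2-1}$, $s_1=s+1$, and finally $Z=\gamma_0+\beta a$, $s_1=\beta_1 s_2$ that produce the coordinates $(a,s_2)$ on $\mathcal{D}_\zeta$. Since $\sigma$ fixes $Z$ and sends $(x,y)$ to $(\zeta^{-q}x,\zeta^{-q}y)$, we have $\sigma(a_{\mathrm{old}})=a_{\mathrm{old}}$, $\sigma(t)=\zeta^{q}t$, and $\sigma(s)=\zeta^{q(q^2-1)}s$. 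A single equal-characteristic identity then drives the rest of the argument: since $\zeta\in\mu_{q^2(q+1)}$, the element $\zeta^{q(q+1)}$ is a $q$-th root of unity in $\mathbf{C}$, and in characteristic $p$ the factorization $X^q-1=(X-1)^q$ forces it to equal $1$. Hence $\zeta^{q(q^2-1)}=(\zeta^{q(q+1)})^{q-1}=1$, so $\sigma(s)=s$ and $\sigma(s_1)=s_1$, and applying $\sigma$ to $s_1=\beta_1 s_2$ yields $\sigma(s_2)=(\beta_1/\sigma(\beta_1))\,s_2$.

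The same char-$p$ identity, applied to $\gamma=\alpha^{(q-1)(q^2-1)}$, shows $\sigma(\gamma)=\gamma$, and uniqueness of $q$-th roots in characteristic $p$ gives $\sigma(\gamma_1)=\gamma_1$. From $\beta^{q-1}=-\gamma_1$ we then read off $\beta/\sigma(\beta)\in\mu_{q-1}$, so its reduction lies in $\mathbb{F}_q^{\times}$, and from $\beta_1^{q+1}=\beta^q$ we obtain $(\beta_1/\sigma(\beta_1))^{q+1}=(\beta/\sigma(\beta))^q=\beta/\sigma(\beta)$ in the reduction, which is exactly the relation asserted between the two residues. For the action on $a$, applying $\sigma$ to $Z=\gamma_0+\beta a$ and using $\sigma(Z)=Z$ gives
\[
\sigma(a)=\frac{\gamma_0-\sigma(\gamma_0)}{\sigma(\beta)}+\frac{\beta}{\sigma(\beta)}\,a.
\]
Since $\gamma_1$ is fixed, $\sigma(\gamma_0)$ satisfies the same equation $Y^q+\gamma_1 Y+1=0$ as $\gamma_0$; setting $\delta=\sigma(\gamma_0)-\gamma_0$ and subtracting yields $\delta^q+\gamma_1\delta=0$, so either $\delta=0$ or $\delta^{q-1}=-\gamma_1=\beta^{q-1}$, placing $\delta/\beta$ in $\mathbb{F}_q$. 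Dividing by $\sigma(\beta)$ and using the already-established reduction of $\beta/\sigma(\beta)$ puts the constant term into $\mathbb{F}_q$, as required.

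The main obstacle is the bookkeeping required to verify that the higher-order error terms in the $\mathrm{mod}\,0+$ congruences defining $\mathcal{D}_\zeta$ do not interfere when one passes to the special fiber; these valuations were already established in the previous subsection, so the check is routine. Once the pivotal identity $\zeta^{q(q+1)}=1$ is noted, every coefficient reduction collapses to an elementary manipulation with the defining equations of $\gamma_0$, $\beta$ and $\beta_1$.
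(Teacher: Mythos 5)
Your proof is correct and is exactly the argument the paper intends (the paper in fact states this lemma without proof): you transport the inertia action already computed on $(x,y,Z)$ through the substitutions $a=x/y$, $t=1/y$, $s=t^{q^2-1}$, $s_1=s+1$, $Z=\gamma_0+\beta a$, $s_1=\beta_1 s_2$, and everything reduces to the equal-characteristic identity $\zeta^{q(q+1)}=1$ together with the defining equations of $\gamma_0$, $\beta$ and $\beta_1$. The verifications that $\overline{\beta/\sigma(\beta)}\in\mathbb{F}_q^{\times}$, that $\overline{(\beta_1/\sigma(\beta_1))}^{q+1}=\overline{\beta/\sigma(\beta)}$, and that $(\gamma_0-\sigma(\gamma_0))/\sigma(\beta)$ reduces into $\mathbb{F}_q$ via the Artin--Schreier-type relation $\delta^q+\gamma_1\delta=0$ are all carried out correctly.
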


\subsection{Calculation of the reduction of the space $\mathbf{Z}_{1,1,e_1} \subset \mathcal{X}(\pi^2)$}
We will compute the reduction of the space $\mathbf{Z}_{1,1,e_1}.$
We prove that
the reduction of the space $\mathbf{Z}^{\pi_2}_{1,1,e_1}$
is defined by the following equation
$$Z^q=X^{q^2-1}+\frac{1}{X^{q^2-1}}.$$
This affine curve with genus $0$
has $2(q^2-1)$ singular points at $X=\zeta,\zeta \in \mu_{2(q^2-1)}.$
Similar phenomenon is already observed in 
the defining equation of ``bridging component'' 
$\overline{\mathbf{Z}}^A_{1,1}$ in the stable reduction of the modular curve
$X_0(p^3)$ found by Coleman-McMurdy in \cite{CM}[Proposition 8.2].
See also \cite{T}[Proposition 3.1].

Let $\pi_2$ be as in the previous subsection.
Let $(u,X_2,Y_2) \in \mathbf{Z}_{1,1,e_1}.$
Now, we recall that the space $\mathbf{Z}_{1,1,e_1}$
is defined by the following conditions;
$$v(u)=1/2,
v(X_1)=\frac{1}{2(q-1)},v(X_2)=\frac{1}{2q^2(q-1)},
v(Y_1)=\frac{1}{2q(q-1)},v(Y_2)=\frac{1}{2q^3(q-1)}.$$
We choose an element $\alpha$ such that 
$\alpha^{2q^2}=\pi_2.$
Then, we have $v(\alpha)=1/2q^3(q-1).$

We change variables as follows
$X_1=\alpha^{q^3}x_1,X_2=\alpha^{q}x,Y_1=\alpha^{q^2} y_1, Y_2=\alpha y.$
Substituting them to (\ref{fd1}) and dividing it by $\pi_2,$
we acquire the following
congruence
\begin{equation}\label{ff1}
-x^qy_1+\gamma (x_1y^q+xy_1^q)=1\ ({\rm mod}\ (1/2q)+)
\end{equation}
where we set $\gamma:=\alpha^{q(q-1)^2}.$
We have $v(\gamma)=(q-1)/2q^2.$
Since we have $y_1 \equiv y_2^{q^2},x_1 \equiv x_2^{q^2}$
modulo $(1/2q)+,$
the congruence (\ref{ff1}) induces the following congruence
\begin{equation}\label{ff2}
-x^qy^{q^2}+\gamma(x^{q^2}y^q+xy^{q^3})=1\ ({\rm mod}\ (1/2q)+).
\end{equation}
Then, we introduce a new parameter $Z$
as follows
$1+xy^{q}=\gamma_1 Z$ where $\gamma_1$ satisfies $\gamma_1^q=\gamma.$
By substituting $1+xy^{q}=\gamma_1 Z$ to (\ref{ff2})
and dividing it by $\gamma,$ we acquire the following congruence
\begin{equation}\label{ff4}
\biggl(Z+\frac{1}{y^{q^2-1}}+y^{q^2-1}\biggr)^q
 \equiv \gamma_1 y^{q(q^2-1)}Z\ ({\rm mod}\ (1/2q^2)+).
\end{equation}
Again, we introduce a new parameter $Z_1$ as follows
\begin{equation}\label{ff3}
Z+\frac{1}{y^{q^2-1}}+y^{q^2-1}=\gamma_2 y^{q^2-1} Z_1
\end{equation}
where we choose an element $\gamma_2$ such that $\gamma_2^q=\gamma_1.$
Substituting (\ref{ff3})
 to (\ref{ff4}) and dividing it by $y^{q(q^2-1)}\gamma_1,$
 the following congruence holds
 $Z_1^q=Z\ ({\rm mod}\ (1/2q^3)+).$
 Furthermore, by substituting this to (\ref{ff3}), we obtain the following congruence
 \begin{equation}\label{ff5}
 Z_1^q+\frac{1}{y^{q^2-1}}+y^{q^2-1} \equiv \gamma_2 y^{q^2-1} Z_1\ ({\rm mod}\ (1/2q^3)+).
 \end{equation}
Hence, we have obtained the following proposition.

\begin{proposition}
The reduction of the space $\mathbf{Z}^{\pi_2}_{1,1,e_1}$
is defined by the following equation
$$Z_1^q+\frac{1}{y^{q^2-1}}+y^{q^2-1}=0.$$
In particular, this curve is an affine curve of genus $0.$
This curve has singular points at $y=\zeta$ with $\zeta \in \mu_{2(q^2-1)}.$
\end{proposition}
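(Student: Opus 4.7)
The plan is to pass from the congruence (\ref{ff5}) to an honest equality in the reduction, then analyze the resulting affine curve directly.

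First I would observe that $\gamma_2^q = \gamma_1$ and $\gamma_1^q = \gamma$ with $v(\gamma) = (q-1)/(2q^2)$, hence $v(\gamma_2) = (q-1)/(2q^4) > 0$. On the affinoid the variable $y = Y_2/\alpha$ is a unit, and $Z_1$ is integral of valuation $0$ (both forced by the valuation constraints from Definition \ref{deff} together with the substitution (\ref{ff3})). Consequently the right-hand side $\gamma_2 y^{q^2-1} Z_1$ of (\ref{ff5}) is topologically nilpotent, and reducing modulo the maximal ideal yields precisely the stated defining equation
$$Z_1^q + \frac{1}{y^{q^2-1}} + y^{q^2-1} = 0.$$

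For the genus assertion, write the equation as $Z_1^q = f(y)$ with $f(y) = -(y^{q^2-1} + y^{-(q^2-1)}) \in \bar{\mathbb{F}}_q[y^{\pm 1}]$. Since $q^2-1$ is coprime to $p$, the Laurent polynomial $f(y)$ is not a $q$-th power in $\bar{\mathbb{F}}_q(y)$, so the function field of the curve is the purely inseparable extension $\bar{\mathbb{F}}_q(y)(f(y)^{1/q}) \subset \bar{\mathbb{F}}_q(y^{1/q})$ of degree $q$, and hence equals $\bar{\mathbb{F}}_q(y^{1/q})$ itself. As this is a rational function field, the normalization of the curve is rational and its geometric genus is $0$.

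For the singular locus, set $F(Z_1, y) = Z_1^q + y^{q^2-1} + y^{-(q^2-1)}$. Since $q$ is a power of $p$, $\partial F/\partial Z_1 = q Z_1^{q-1}$ vanishes identically. The remaining condition $\partial F/\partial y = (q^2-1)(y^{q^2-2} - y^{-q^2}) = 0$, using that $q^2-1$ is prime to $p$, is equivalent to $y^{2(q^2-1)} = 1$, yielding exactly the $2(q^2-1)$ singular points $y = \zeta$ with $\zeta \in \mu_{2(q^2-1)}$, as claimed.

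The main obstacle I anticipate is the first step: a careful justification that the telescoping substitutions introducing $Z$, $\gamma_1$, $\gamma_2$, and $Z_1$ on the specified subaffinoid give free parameters whose reductions satisfy exactly (and not merely modulo a larger ideal) the stated equation. This should yield to the same sort of valuation and degree bookkeeping already carried out in the preceding subsections for $\mathbf{Y}^{\pi_2}_{2,2}$ and $\mathbf{Y}^{\pi_2}_{3,1,c}$; once the chain of substitutions is unwound, the rationality of the curve and the determination of its singular locus follow directly from the explicit shape of $F$.
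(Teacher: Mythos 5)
Your proposal is correct and follows essentially the same route as the paper: it takes the chain of substitutions culminating in the congruence (\ref{ff5}) as the input, notes that $v(\gamma_2)=(q-1)/2q^4>0$ so the right-hand side dies in the reduction, and reads off the stated equation. The only material you add is the explicit verification of the genus-$0$ claim (via the purely inseparable extension $\bar{\mathbb{F}}_q(y)\subset\bar{\mathbb{F}}_q(y^{1/q})$) and of the singular locus $y^{2(q^2-1)}=1$ via partial derivatives; both computations are correct and merely supply details the paper asserts without proof.
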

\begin{remark}
By the definition (\ref{deff}), the space $\mathbf{Z}^{\pi_2}_{1,1,e_1^{\vee}}$
has the same reduction as the one of the space $\mathbf{Z}^{\pi_2}_{1,1,e_1}$
by swapping $X_i$ for $Y_i\ (i=1,2).$
\end{remark}
\begin{lemma}
Let $\sigma \in I_F$ be an element fixing $\pi_2.$
We write $\sigma(\alpha)=\zeta \alpha$ with $\zeta \in \mu_{2q^2}.$
Then, the element $\sigma \in I_F$
acts on the reduction $\overline{\mathbf{Z}}^{\pi_2}_{1,1,e_1}$
as follows 
$$\sigma:\overline{\mathbf{Z}}^{\pi_2}_{1,1,e_1} 
\longrightarrow \overline{\mathbf{Z}}^{\pi_2}_{1,1,e_1};
(y,Z) \mapsto (\zeta^{-1}y,Z).$$  
\end{lemma}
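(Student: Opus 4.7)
The plan is to follow the pattern of the preceding inertia lemmas in Section~4 and simply push the change of variables of subsection~4.6 through $\sigma$, one layer at a time, starting from the single hypothesis $\sigma(\alpha)=\zeta\alpha.$

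First I would deal with $y.$ Since $Y_2$ is a coordinate on $\mathcal{X}(\pi^2)$ defined over $F,$ it is fixed by $\sigma$ as a function on the affinoid, so the relation $y=Y_2/\alpha$ together with $\sigma(\alpha)=\zeta\alpha$ immediately gives $\sigma(y)=\zeta^{-1}y.$ The same argument applied to $x=X_2/\alpha^q$ yields $\sigma(x)=\zeta^{-q}x.$ This already establishes the action on the first coordinate of the reduction.

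Next I would compute how the auxiliary constants transform. From $\gamma=\alpha^{q(q-1)^2}$ one has $\sigma(\gamma)=\zeta^{q(q-1)^2}\gamma.$ Because $\mathrm{char}\,F=p>0,$ the $q$-th power map is injective on $\mathbf{C}^\times,$ so the elements $\gamma_1,\gamma_2$ defined by $\gamma_1^q=\gamma$ and $\gamma_2^q=\gamma_1$ are uniquely determined; this lets one conclude $\sigma(\gamma_1)=\zeta^{(q-1)^2}\gamma_1$ and an analogous formula for $\sigma(\gamma_2).$ I would then apply $\sigma$ to the two defining relations $1+xy^q=\gamma_1 Z$ and $Z+y^{-(q^2-1)}+y^{q^2-1}=\gamma_2\, y^{q^2-1}Z_1$ of subsection~4.6, substitute the transformation rules above, and divide by the corresponding $\sigma$-twists of $\gamma_1$ and $\gamma_2.$ The outcome should be the congruences $\sigma(Z)\equiv Z$ mod $(1/2q^2)+$ and $\sigma(Z_1)\equiv Z_1$ mod $(1/2q^3)+,$ which is exactly what is needed to conclude that $Z_1$ (the coordinate denoted $Z$ in the statement) is fixed on the reduction.

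The main obstacle is the bookkeeping in the last step: verifying that the accumulated twist of $Z_1$ by powers of $\zeta$ coming from $\sigma(x),$ $\sigma(y),$ $\sigma(\gamma_1)$ and $\sigma(\gamma_2)$ really does cancel. Here I would exploit the equal-characteristic simplification that the $p$-part of $\mu_{2q^2}(\mathbf{C})$ is trivial, so $\bar\zeta\in\{\pm 1\},$ and consequently every $p$-divisible power of $\zeta$ reduces to $1$ in the residue field. The remaining $\mu_2$-contribution is straightforward to track and collapses to the identity on $\bar Z_1.$ Combined with $\sigma(\bar y)=\bar\zeta^{-1}\bar y$ from the first step, this yields the asserted action $(\bar y,\bar Z_1)\mapsto(\bar\zeta^{-1}\bar y,\bar Z_1)$ on $\overline{\mathbf{Z}}^{\pi_2}_{1,1,e_1}.$
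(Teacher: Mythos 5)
Your proposal is correct and takes essentially the same route as the paper: the paper in fact omits a proof of this particular lemma, but the one inertia computation it does write out (for $\overline{\mathbf{Y}}^{\pi_2}_{3,1,c,\zeta}$) is exactly your argument of pushing $\sigma(\alpha)=\zeta\alpha$ through the rescalings $X_2=\alpha^q x$, $Y_2=\alpha y$, the constants $\gamma,\gamma_1,\gamma_2$ (using injectivity of the $q$-power map in characteristic $p$), and the defining relations of $Z$ and $Z_1$. One small correction to your bookkeeping: the cancellation on $Z_1$ comes from $\zeta^2=1$ (since $\mu_{2q^2}=\mu_2$ in characteristic $p$) together with the fact that the relevant exponents $2q$, $(q-1)^2$, $q(q-1)^2$ and $q^2-1$ are all even, not from ``$p$-divisible powers of $\zeta$ reducing to $1$'' --- indeed $\zeta^p=\zeta$ when $\zeta=-1$ and $p$ is odd --- but this does not affect the validity of your conclusion $(y,Z_1)\mapsto(\zeta^{-1}y,Z_1)$.
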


\subsection{Analysis of the singular residue classes of 
$\mathbf{Z}^{\pi_2}_{1,1,e_1}$}
In this subsection, we analyze the singular residue 
classes in $\mathbf{Z}^{\pi_2}_{1,1,e_1}.$
Then, we find $2(q^2-1)$ irreducible components defined by 
the Artin-Schreier equation $a^q-a=t^2.$
A calculation in this subsection is very similar to
the one in \cite{T}[subsection 3.2].

We keep the same notation as in the previous subsection.
We set $s:=y^{q^2-1}.$
We recall the following congruence (\ref{ff5})
\begin{equation}\label{ff5}
 Z_1^q+\frac{1}{s}+\biggl(\frac{s}{1-\gamma_2Z_1}\biggr) \equiv 0\ ({\rm mod}\ (1/2q^3)+).
 \end{equation}
We set $F(s,Z_1):=Z_1^q+\frac{1}{s}+\bigl(\frac{s}{1-\gamma_2Z_1}\bigr).$

We choose an elements $\gamma_0$
such that
$\gamma_0^q+2(1+\gamma_2\gamma_0)^{1/2}=0.$
We set $s_0:=(1+\gamma_2\gamma_0)^{1/2}.$
Then, we have $\partial_s F(s_0,\gamma_0)=0, F(s_0,\gamma_0)=0.$
Furthermore, we choose elements $\beta,\beta_1$
such that $\beta^{q-1}=\gamma_2s_0, \beta_1^2=-\beta^q/s_0^3.$
Note that $v(\beta)=1/2q^4,v(\beta_1)=1/4q^3.$

We change variables as follows
$$Z_1=\gamma_0+\beta a, s=s_0+\beta_1s_1.$$
By substituting these to (\ref{ff5}), and dividing it by $\beta^q,$
we acquire the following, $a^q-a \equiv s_1^2\ ({\rm mod}\ 0+).$
Hence, we have obtained the following proposition.
\begin{proposition}
In the reduction of the space $\mathbf{Z}^{\pi_2}_{1,1,e_1},$
there exist $2(q^2-1)$ irreducible components defined by $a^q-a=t^2.$
\end{proposition}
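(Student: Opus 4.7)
The plan is to run a blow-up analysis at each of the $2(q^2-1)$ singular residue classes of $\overline{\mathbf{Z}}^{\pi_2}_{1,1,e_1}$, starting not from the final reduction $Z_1^q + s^{-1} + s = 0$ but from the sharper congruence derived at the end of the previous subsection,
$$F(s, Z_1) := Z_1^q + \frac{1}{s} + \frac{s}{1 - \gamma_2 Z_1} \equiv 0 \pmod{(1/2q^3)+},$$
where $s := y^{q^2-1}$ and $\gamma_2$ has positive valuation. The singular points are indexed by $y \in \mu_{2(q^2-1)}$; by the symmetry $y \mapsto \zeta y$ it suffices to treat one such class and count afterwards.

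At a fixed singular class, the first step is to find an accurate rigid-analytic center $(s_0, \gamma_0)$ for the class, i.e.\ a pair at which $F$ and $\partial_{Z_1} F$ vanish to higher order than merely modulo $0+$. Setting $s_0 := (1 + \gamma_2 \gamma_0)^{1/2}$ forces $\partial_s F(s_0, \gamma_0) = 0$ to leading order, and then $F(s_0,\gamma_0) = 0$ becomes the implicit equation $\gamma_0^q + 2(1+\gamma_2\gamma_0)^{1/2} = 0$. Since $\gamma_2$ is a small perturbation, this equation has exactly $2(q^2-1)$ solutions $\gamma_0$ deforming the $2(q^2-1)$ roots in $\mu_{2(q^2-1)}$, matching the $2(q^2-1)$ singular residue classes.

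The second step is to introduce blow-up coordinates $a, s_1$ via
$$Z_1 = \gamma_0 + \beta a, \qquad s = s_0 + \beta_1 s_1,$$
with scaling factors $\beta^{q-1} = \gamma_2 s_0$ and $\beta_1^2 = -\beta^q/s_0^3$. These are chosen so that, in the Taylor expansion of $F$ around $(s_0, \gamma_0)$, the three leading surviving contributions — the $Z_1^q$-term $\beta^q a^q$, the linear-$Z_1$ correction reducing to $-\beta a$ after invoking the defining relation for $\gamma_0$, and the quadratic-$s$ term $-\beta_1^2 s_1^2/s_0^3 = \beta^q s_1^2/s_0^0$ — all have valuation exactly $v(\beta^q) = 1/(2q^3)$. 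The linear-$s$ contribution vanishes by construction of $s_0$. Dividing $F \equiv 0$ by $\beta^q$ then produces $a^q - a \equiv s_1^2 \pmod{0+}$.

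The main obstacle is the valuation bookkeeping: one must verify $v(\beta) = 1/(2q^4)$ and $v(\beta_1) = 1/(4q^3)$, check that every higher Taylor term in the expansion of $F$ around $(s_0, \gamma_0)$, as well as the error inherited from truncating $F$ at level $(1/2q^3)+$, has valuation strictly greater than $v(\beta^q)$ and hence drops out modulo $0+$, and confirm that the $\gamma_0$-equation really produces $2(q^2-1)$ geometrically distinct residue classes. Once these estimates are in hand, the blow-up at each singularity yields an Artin-Schreier curve $a^q - a = t^2$, and the count over all singular residue classes gives the claimed $2(q^2-1)$ components.
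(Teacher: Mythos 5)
Your proposal follows essentially the same route as the paper: the same center $(s_0,\gamma_0)$ with $\gamma_0^q+2(1+\gamma_2\gamma_0)^{1/2}=0$ and $s_0=(1+\gamma_2\gamma_0)^{1/2}$, the same scaling factors $\beta^{q-1}=\gamma_2 s_0$ and $\beta_1^2=-\beta^q/s_0^3$ with $v(\beta)=1/2q^4$, $v(\beta_1)=1/4q^3$, and the same substitution $Z_1=\gamma_0+\beta a$, $s=s_0+\beta_1 s_1$ followed by division by $\beta^q$. One small caveat: the count $2(q^2-1)$ comes from the singular points $y=\zeta\in\mu_{2(q^2-1)}$ of the reduction (one residue class, hence one Artin--Schreier component, for each such $\zeta$), not from the number of solutions of the auxiliary equation for $\gamma_0$, which is a choice made once.
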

Let $\mathcal{D}_{\zeta}$
be the underlying affinoid
of the singular residue classes in $\mathbf{Z}^{\pi_2}_{1,1,e_1}.$
\begin{lemma}
Let $\sigma \in I_F$ be an element fixing $\pi_2.$
Then, the element $\sigma \in I_F$
acts on the reduction $\overline{\mathcal{D}}_{\zeta}$
as follows 
$$\overline{\mathcal{D}}_{\zeta} \longrightarrow \overline{\mathcal{D}}_{\zeta};
(a,s_1) \mapsto 
(\overline{\biggl(\frac{\gamma_0-\sigma(\gamma_0)}
{\sigma(\beta)}\biggr)}+\overline{
\biggl(\frac{\beta}{\sigma(\beta)}\biggr)}a,
\overline{\biggl(\frac{\beta_1}{\sigma(\beta_1)}\biggr)}s_1)$$
with 
$\overline{\bigl(\frac{\gamma_0-\sigma(\gamma_0)}
{\sigma(\beta)}\bigr)} \in \mathbb{F}_q,$
$\overline{
\bigl(\frac{\beta}{\sigma(\beta)}\bigr)} \in \mathbb{F}^{\times}_q$
and
$\overline{\bigl(\frac{\beta_1}{\sigma(\beta_1)}\bigr)}^{2}=\overline{
\bigl(\frac{\beta}{\sigma(\beta)}\bigr)}.$
\end{lemma}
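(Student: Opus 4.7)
The plan is to apply $\sigma$ to the two blow-up substitutions $Z_1=\gamma_0+\beta a$ and $s=s_0+\beta_1 s_1$ defining the new coordinates $(a,s_1)$ on the residue class, solve for $\sigma(a)$ and $\sigma(s_1)$ in terms of $a,s_1$, and then reduce modulo the maximal ideal $\mathfrak{m}_{\mathbf{R}}$.

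First I would pin down how $\sigma$ acts on the ambient parameters $Z_1$ and $s$ at the precision we need. From the preceding lemma on $\overline{\mathbf{Z}}^{\pi_2}_{1,1,e_1}$, $\sigma$ fixes the reduction of $Z$; since $Z_1^q=Z$ and we work in equal characteristic $p$, the uniqueness of $q$-th roots forces $\sigma(Z_1)\equiv Z_1$ to the relevant order on the affinoid. Similarly, $\sigma(y)=\zeta^{-1}y$ on the reduction and $s=y^{q^2-1}$. Since $\mathcal{D}_\zeta$ is stabilised (this is implicit in writing $\overline{\mathcal{D}}_\zeta\to\overline{\mathcal{D}}_\zeta$), $\sigma$ fixes the singular point $s=s_0$; together with the defining equations of $s_0,\gamma_0$ this yields $\sigma(s_0)\equiv s_0$ and $\sigma(\gamma_0)\equiv\gamma_0$ modulo terms of valuation strictly greater than $v(\beta_1)$ and $v(\beta)$ respectively.

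Next I would apply $\sigma$ to both substitutions. From $\sigma(Z_1)=Z_1$,
$$\sigma(a)=\frac{\gamma_0-\sigma(\gamma_0)}{\sigma(\beta)}+\frac{\beta}{\sigma(\beta)}\,a,$$
and from $\sigma(s)\equiv s$ to the required precision,
$$\sigma(s_1)\equiv\frac{s_0-\sigma(s_0)}{\sigma(\beta_1)}+\frac{\beta_1}{\sigma(\beta_1)}\,s_1.$$
Reduction modulo $\mathfrak{m}_{\mathbf{R}}$, using the valuation estimate $v(s_0-\sigma(s_0))>v(\beta_1)$ from the previous paragraph, kills the constant term of $\sigma(s_1)$, producing the stated formula.

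Finally I would verify the algebraic constraints on the coefficients. Applying $\sigma$ to $\beta^{q-1}=\gamma_2 s_0$ gives $(\beta/\sigma(\beta))^{q-1}=(\gamma_2/\sigma(\gamma_2))(s_0/\sigma(s_0))$; each ratio on the right reduces to an element of $\mathbb{F}_q^{\times}$, whence $\overline{\beta/\sigma(\beta)}\in\mathbb{F}_q^{\times}$. Applying $\sigma$ to $\beta_1^2=-\beta^q/s_0^3$ and dividing yields $(\beta_1/\sigma(\beta_1))^2=(\beta/\sigma(\beta))^q\cdot(\sigma(s_0)/s_0)^3$; in the reduction the first factor becomes $\overline{\beta/\sigma(\beta)}$ (since it lies in $\mathbb{F}_q$) and the second becomes $1$ (since $\overline{\sigma(s_0)}=\overline{s_0}\in\mathbb{F}_q^{\times}$), giving the square relation. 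For the constant term in $\sigma(a)$, raising to the $q$-th power and using $\gamma_0^q=-2s_0$ gives $(\gamma_0-\sigma(\gamma_0))^q=-2(s_0-\sigma(s_0))$, and a short valuation check shows $(\gamma_0-\sigma(\gamma_0))/\sigma(\beta)$ is integral with residue in $\mathbb{F}_q$.

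The main obstacle will be the precision bookkeeping in the first step: one must verify that $\sigma(s_0)\equiv s_0\pmod{\mathrm{val}>v(\beta_1)}$ and $\sigma(\gamma_0)\equiv\gamma_0\pmod{\mathrm{val}>v(\beta)}$ purely from the facts that $\sigma$ fixes $\pi_2$ and stabilises the residue class, by tracking how $s_0$ and $\gamma_0$ are determined by the singular locus of $F(s,Z_1)=0$. Once these estimates are in place, the rest of the argument is a transparent manipulation of the blow-up substitutions.
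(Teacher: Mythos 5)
The paper states this lemma without any proof, so there is nothing to compare against line by line; your strategy --- apply $\sigma$ to the substitutions $Z_1=\gamma_0+\beta a$, $s=s_0+\beta_1 s_1$, solve for the new coordinates, and reduce --- is certainly the intended argument, and the verifications of the three residue constraints via $\beta^{q-1}=\gamma_2 s_0$, $\beta_1^2=-\beta^q/s_0^3$ and $\gamma_0^q=-2s_0$ are the right ingredients. Two of your precision steps, however, do not close as written.

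First, the deduction ``$\sigma$ fixes $\overline{Z}$, and $Z_1^q=Z$, so uniqueness of $q$-th roots gives $\sigma(Z_1)\equiv Z_1$ to the relevant order'' is too weak: from $v(\sigma(Z)-Z)>0$ one only gets $v(\sigma(Z_1)-Z_1)>0/q$, since extracting a $q$-th root divides valuations by $q$; this neither reaches the threshold $v(\beta)=1/2q^4$ nor determines the residue of $(\sigma(Z_1)-Z_1)/\beta$, which is exactly what enters the constant term of the action on $a$. The correct statement is that $Z_1$ is \emph{exactly} $\sigma$-equivariant: it is given by the closed formula $Z_1=(Z+s+s^{-1})/(\gamma_2 s)$ with $Z=(1+xy^q)/\gamma_1$, $s=y^{q^2-1}$, and since $\sigma(\alpha)=\zeta\alpha$ with $\zeta\in\mu_{2q^2}$ (so $\zeta^2=1$) one has $\zeta^{2q}=\zeta^{q^2-1}=1$ and $\sigma(\gamma_1)=\gamma_1$, $\sigma(\gamma_2)=\gamma_2$; hence $xy^q=X_2Y_2^q/\alpha^{2q}$ and $s$ transform exactly as the intrinsic functions do, and no approximation is needed. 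Second, the estimate $v(s_0-\sigma(s_0))>v(\beta_1)$ cannot be read off from ``$\sigma$ fixes the singular point,'' which only gives $v(s_0-\sigma(s_0))>0$. The right order is: set $\delta=\gamma_0-\sigma(\gamma_0)$; from $\delta^q=-2(s_0-\sigma(s_0))$ and $(s_0-\sigma(s_0))(s_0+\sigma(s_0))=\pm\gamma_2\delta$ one gets $\delta^{q-1}=(\mathrm{unit})\cdot\gamma_2$, hence $v(\delta)=v(\gamma_2)/(q-1)=v(\beta)$ or $\delta=0$; only then does $v(s_0-\sigma(s_0))=v(\gamma_2)+v(\delta)\geq 1/2q^3>1/4q^3=v(\beta_1)$ follow, killing the constant term in $s_1$. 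The same relation, divided by $\beta^{q-1}=\gamma_2 s_0$, is what shows $(\delta/\beta)^{q-1}$ reduces to $1$ and hence $\overline{\delta/\sigma(\beta)}\in\mathbb{F}_q$ --- here you must track the signs carefully (the paper's own normalizations of $s_0$ and $\beta$ contain sign slips, and with the wrong sign one finds $(\overline{\delta/\beta})^{q-1}=-1$, which would contradict the lemma); the consistency check is that the resulting affine map must preserve the component $a^q-a=s_1^2$, which forces the constant to lie in $\mathbb{F}_q$. Finally, note that $\overline{(\beta/\sigma(\beta))}^{q-1}$ reducing to an element of $\mathbb{F}_q^{\times}$ is not enough to place $\overline{\beta/\sigma(\beta)}$ in $\mathbb{F}_q^{\times}$; you need that $(\beta/\sigma(\beta))^{q-1}=s_0/\sigma(s_0)$ reduces to $1$ exactly, which it does.
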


\subsection{Calculation of the reduction of the 
space $\mathbf{Z}_{1,1,c} \subset \mathcal{X}(\pi^2)$}
In this subsection, we compute the reduction of the space $\mathbf{Z}_{1,1,c}.$
The reduction of the space $\mathbf{Z}^{\pi_2}_{1,1,c}$
has $(q-1)$ connected components and each component is defined by
$Z^q=X^{q^2-1}+(1/X^{q^2-1})$ as the reduction $\overline{\mathbf{Z}}^{\pi_2}_{1,1,e_1}.$

Let $\pi_2$ be as in the previous subsection.
Let $(u,X_2,Y_2) \in \mathbf{Z}^{\pi_2}_{1,1,c}.$
Recall that the space $\mathbf{Z}^{\pi_2}_{1,1,c}$ is defined by the following conditions;
$$v(u)=1/2,v(X_1)=v(Y_1)=\frac{1}{2q(q-1)},v(X_2)=v(Y_2)=\frac{1}{2q^3(q-1)}.$$
We choose an element $\alpha$ such that $\alpha^{2q^2}=\pi_2$
with $v(\alpha)=1/2q^3(q-1).$
We change variables as follows
$X_1=\alpha^{q^2}x_1,Y_1=\alpha^{q^2}y_1,X_2=\alpha x,Y_2=\alpha y.$
Substituting them to (\ref{ff1}), and dividing this by $\alpha^{q(q+1)},$
we acquire the following 
\begin{equation}\label{ffi1}
(x_1y^q-x^qy_1)-\gamma^{q+1}(x_1^qy-xy_1^q)=\gamma^{q/(q-1)}
\end{equation}
where we set $\gamma:=\alpha^{(q-1)^2}.$ Then, we have $v(\gamma)=(q-1)/2q^3.$
Since we have $x_1 \equiv x^{q^2},y_1 \equiv y^{q^2}$ modulo
$(q+1)/2q^2+,$
the equality (\ref{ffi1}) induces the following congruence
\begin{equation}\label{ffi2}
(x^qy-xy^q)^q-\gamma^{q+1}(x^{q^3}y-xy^{q^3})=\gamma^{q/(q-1)}\ 
({\rm mod}\ \biggl(\frac{q+1}{2q^2}\biggr)+).
\end{equation}
We put
\begin{equation}\label{ffi3}
\mathcal{Z}:=x^qy-xy^q
\end{equation}
Further, we set $a:=x/y,t:=1/y.$
Substituting (\ref{ffi3}) to (\ref{ffi2}), we acquire the following
\begin{equation}\label{ngi3}
\mathcal{Z}^q-\gamma^{q+1} \biggl(t^{q^2-1}\mathcal{Z}^{q^2}+
\frac{\mathcal{Z}^q}{t^{(q-1)(q^2-1)}}+\frac{\mathcal{Z}}{t^{q(q^2-1)}}\biggr)
\equiv \gamma^{q/(q-1)}\ ({\rm mod}\ \biggl(\frac{q+1}{2q^2}\biggr)+).
\end{equation}
We set $\mathcal{Z}=\gamma^{1/(q-1)}+\frac{\gamma^{q/(q-1)}}{t^{q^2-1}}+\mathcal{Z}_1.$
Substituting this to (\ref{ngi3}), we obtain the following congruence
\begin{equation}\label{ngii1}
\mathcal{Z}_1^q-\gamma^{q+1}\frac{\mathcal{Z}_1}{t^{q(q^2-1)}}
 \equiv \gamma^{(q^2+q-1)/(q-1)}
 \biggl(\frac{1}{t^{(q+1)(q^2-1)}}+
 \frac{1}{t^{(q-1)(q^2-1)}}\biggr)\ ({\rm mod}\ \biggl(\frac{q+1}{2q^2}\biggr)+).
\end{equation}
We choose an element $\gamma_1$
such that $\gamma_1^q=\gamma.$
We change a variable as follows
$\mathcal{Z}_1=\gamma_1^{(q^2+q-1)/(q-1)}\frac{Z}{t^{q^2-1}}.$
Substituting this to (\ref{ngii1}) and dividing it by 
$\gamma^{(q^2+q-1)/(q-1)},$ we acquire the following
\begin{equation}\label{boi1}
Z^q \equiv \frac{1}{t^{q^2-1}}+t^{q^2-1}+\gamma_1\frac{Z}{t^{q^2-1}}\ ({\rm mod}\ (1/2q^3)+).
\end{equation}
Hence, we have obtained the following 
\begin{proposition}
The reduction of the space $\mathbf{Z}^{\pi_2}_{1,1,c}$
is a disjoint union of $(q-1)$ curves defined by the following equation
$$Z^q=\frac{1}{t^{q^2-1}}+t^{q^2-1}.
$$
Furthermore, there exist $2(q^2-1)$ irreducible components
defined by $a^q-a=s^{2}$ which attach to the above curve
at each singular point.
\end{proposition}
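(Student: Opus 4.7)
The first assertion is essentially immediate from congruence (\ref{boi1}): since $v(\gamma_1) > 0$ and both $Z$ and $t$ are units on the affinoid in question, the perturbation term $\gamma_1 Z/t^{q^2-1}$ vanishes upon reduction, leaving the stated equation $Z^q = t^{-(q^2-1)} + t^{q^2-1}$. The splitting into $(q-1)$ connected components arises from the earlier substitution $\mathcal{Z} = \gamma^{1/(q-1)} + \gamma^{q/(q-1)}/t^{q^2-1} + \mathcal{Z}_1$: the $(q-1)$-th root of $\gamma$ admits $(q-1)$ distinct choices, indexed by $\mathbb{F}_q^{\times}$, each producing a disjoint branch of the reduction. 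This is exactly the same mechanism that gave $(q-1)$ components for $\mathbf{Y}^{\pi_2}_{3,1,c}$ in the preceding proposition.

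For the second assertion, I would mimic the singular-residue-class analysis of subsection 4.7 applied to the curve $Z^q = t^{-(q^2-1)} + t^{q^2-1}$. In characteristic $p$ the partial $\partial_Z$ of the defining polynomial vanishes identically, so the singular locus is cut out by $\partial_t(t^{-(q^2-1)} + t^{q^2-1}) = 0$; since $p \nmid (q^2-1)$, this is equivalent to $t^{2(q^2-1)} = 1$, giving exactly $2(q^2-1)$ singular points at $t_0 \in \mu_{2(q^2-1)}$. At each such $t_0$, I would lift to an appropriate pair $(\gamma_0, t_0)$ on the perturbed equation (\ref{boi1}), choosing $\gamma_0$ so that the perturbed curve passes through this point and the partial $\partial_t$ of the full defining polynomial vanishes there (by Hensel this lies in a small neighborhood of the reduced singular point), then introduce auxiliary parameters $\beta$ and $\beta_1$ with $v(\beta)$ chosen so that $\beta^{q-1}$ absorbs the linear perturbation $\gamma_1/t_0^{q^2-1}$ and $v(\beta_1)$ chosen so that $\beta_1^2$ matches the second-order Taylor coefficient of the defining polynomial at $t_0$. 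The change of variables $Z = \gamma_0 + \beta a$, $t = t_0 + \beta_1 s$, substituted into (\ref{boi1}) and divided by $\beta^q$, should then reduce to the Artin-Schreier equation $a^q - a = s^2$, exhibiting one new irreducible component attached at each of the $2(q^2-1)$ singular points.

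The main obstacle is the valuation bookkeeping in this last step: one must verify that $\beta$ and $\beta_1$ can be chosen so that every higher-order term in the Taylor expansion of $F(t,Z) := Z^q - t^{-(q^2-1)} - t^{q^2-1} - \gamma_1 Z/t^{q^2-1}$ around $(\gamma_0, t_0)$ — including the cross terms coming from the perturbation $\gamma_1 Z/t^{q^2-1}$ — acquires strictly positive valuation after dividing by $\beta^q$, so that the reduced equation retains only the three monomials $a^q$, $-a$, and $s^2$. This is the same flavor of computation carried out for $\mathbf{Z}^{\pi_2}_{1,1,e_1}$ in subsection 4.7, and I expect the valuations found there to reappear, up to cosmetic modifications in the proportionality constants coming from the factor $t_0^{q^2-1}$.
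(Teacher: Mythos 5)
Your overall route is the paper's: the first equation is read off from the congruence (\ref{boi1}) by discarding the positive-valuation term $\gamma_1 Z/t^{q^2-1}$, and the second assertion is obtained by rerunning the singular-residue-class analysis of subsection 4.7 at the $2(q^2-1)$ points $t\in\mu_{2(q^2-1)}$, which you correctly identify as the zero locus of $\partial_t\bigl(t^{q^2-1}+t^{-(q^2-1)}\bigr)$ using $p\nmid q^2-1$. The paper's own proof is literally this one-line reduction to (\ref{boi1}) and the previous subsection, so your second paragraph --- the choice of $\gamma_0,\beta,\beta_1$, the substitution $Z=\gamma_0+\beta a$, $t=t_0+\beta_1 s$, and the division by $\beta^q$ --- is the intended argument.

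However, your explanation of the $(q-1)$ connected components is wrong. The substitution $\mathcal{Z}=\gamma^{1/(q-1)}+\gamma^{q/(q-1)}/t^{q^2-1}+\mathcal{Z}_1$ does not produce $(q-1)$ branches: in characteristic $p$ the leading congruence $\mathcal{Z}^q\equiv\gamma^{q/(q-1)}$ determines $\mathcal{Z}$ modulo higher terms \emph{uniquely}, since the $q$-power map is injective; concretely $\gamma^{q/(q-1)}=\alpha^{q(q-1)}$ and $\gamma^{1/(q-1)}=\alpha^{q-1}$ are single, unambiguous elements once $\alpha$ is fixed, not a $(q-1)$-element orbit. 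The actual source of the $(q-1)$ components --- and the mechanism used for $\mathbf{Y}^{\pi_2}_{3,1,c}$ in subsection 4.3, which you cite --- is the ratio $a=x/y=X_2/Y_2$: writing $\mathcal{Z}=x^qy-xy^q=(a^q-a)/t^{q+1}$, one has $v(\mathcal{Z})=v(\gamma^{1/(q-1)})=1/2q^3>0$ while $t$ is a unit, so $a^q-a\equiv 0$ and hence $\bar a\in\mathbb{F}_q$, with $\bar a\neq 0$ because $a$ is invertible; the reduction therefore splits into the $(q-1)$ pieces indexed by $\bar a\in\mathbb{F}_q^{\times}$. With this correction the rest of your argument goes through.
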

\begin{proof}
The required assertion follows from
(\ref{boi1}) and the computation in the previous subsection.
\end{proof}
We describe the action of ${\rm SL}_2(\mathcal{O}_F/\pi^2\mathcal{O}_F)$
on the reduction of the space $\mathbf{Z}^{\pi_2}_{1,1,e_1}.$
\begin{lemma}
Let $g=
\left(
\begin{array}{cc}
a & b \\
c & d
\end{array}
\right) 
\in
{\rm SL}_2(\mathcal{O}_F/\pi^2\mathcal{O}_F).$
For an element $a\in \mathcal{O}_F/\pi^2\mathcal{O}_F,$ 
we denote by $\bar{a}$
by the image of $a$ by the canonical map
$\mathcal{O}_F/\pi^2\mathcal{O}_F \longrightarrow \mathcal{O}_F/\pi \mathcal{O}_F.$
\\1.\ If $c,d$ are units, $\overline{\mathbf{Z}}^{\pi_2}_{1,1,e_1}$
goes to $\overline{\mathbf{Z}}^{\pi_2}_{1,1,c}.$
The element $g$ acts as follows;
$$g:\overline{\mathbf{Z}}^{\pi_2}_{1,1,e_1} 
\longrightarrow 
\overline{\mathbf{Z}}^{\pi_2}_{1,1,c};
(Z,y) \mapsto
(Z,\bar{d}y)$$
\\2.\ If $c$ is a unit and $d$ is divisible by $\pi$, 
$\overline{\mathbf{Z}}^{\pi_2}_{1,1,e_1}$
goes to $\overline{\mathbf{Z}}^{\pi_2}_{1,1,e_1^{\vee}}.$
The element $g$ acts as follows;
$$g:\overline{\mathbf{Z}}^{\pi_2}_{1,1,e_1} 
\longrightarrow 
\overline{\mathbf{Z}}^{\pi_2}_{1,1,e_1^{\vee}};
(Z,y) \mapsto (Z,\bar{c}x).$$
\\3.\ If $c$ is divisible by $\pi$ and $d$ is a unit, 
$\overline{\mathbf{Z}}^{\pi_2}_{1,1,e_1}$
is stable by $g.$
The element $g$ acts as follows;
$$g:\overline{\mathbf{Z}}^{\pi_2}_{1,1,e_1} 
\longrightarrow 
\overline{\mathbf{Z}}^{\pi_2}_{1,1,e_1};
(Z,y) \mapsto (Z,\bar{d}y)$$
\end{lemma}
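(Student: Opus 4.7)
The plan is to apply the explicit formula from subsection~2.3 for the matrix action on the Drinfeld basis: since $X+_{\mathcal{F}^{\rm univ}}Y=X+Y$ in equal characteristic,
$$g(X_2)=[a]_u(X_2)+[c]_u(Y_2),\qquad g(Y_2)=[b]_u(X_2)+[d]_u(Y_2).$$
For each coefficient $e\in\mathcal{O}_F/\pi^2\mathcal{O}_F$, writing $e\equiv e_0+e_1\pi\ (\mathrm{mod}\ \pi^2)$ with $e_i\in\mathbb{F}_q$, the formulas $[\zeta]_u(X)=\zeta X$ for $\zeta\in\mathbb{F}_q$ and $[\pi]_u(X)=X^{q^2}+uX^q+\pi X$ give
$$[e]_u(X)=e_0 X+e_1 X^{q^2}+e_1 u X^q+e_1\pi X.$$
Combined with the valuations $v(X_2)=\tfrac{1}{2q^2(q-1)}$, $v(Y_2)=\tfrac{1}{2q^3(q-1)}$, $v(X_1)=\tfrac{1}{2(q-1)}$, $v(Y_1)=\tfrac{1}{2q(q-1)}$, $v(u)=1/2$ that define $\mathbf{Z}^{\pi_2}_{1,1,e_1}$, this identifies the dominant terms of $g(X_2)$ and $g(Y_2)$ in each case, and in particular shows which stratum $\mathbf{Z}^{\pi_2}_{1,1,\ast}$ the image lies in.

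In case (1), both $c$ and $d$ are units, so $[c]_u(Y_2)\equiv\bar{c}Y_2$ and $[d]_u(Y_2)\equiv\bar{d}Y_2$ dominate; hence $v(g(X_2))=v(g(Y_2))=v(Y_2)$, placing the image in $\mathbf{Z}^{\pi_2}_{1,1,c}$ with $\overline{g(X_2)/\alpha}=\bar{c}y$ and $\overline{g(Y_2)/\alpha}=\bar{d}y$. In case (2), $g(X_2)$ is still dominated by $\bar{c}Y_2$, but in $g(Y_2)$ the term $[d]_u(Y_2)\equiv\overline{(d/\pi)}Y_1$ has strictly larger valuation than $[b]_u(X_2)\equiv\bar{b}X_2$ (where $b$ is forced to be a unit by $ad-bc\equiv 1$); thus $v(g(Y_2))=v(X_2)$ while $v(g(X_2))=v(Y_2)$, swapping the valuation pattern and placing the image in $\mathbf{Z}^{\pi_2}_{1,1,e_1^{\vee}}$. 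In case (3), $[c]_u(Y_2)\equiv\overline{(c/\pi)}Y_1$ has larger valuation than $[a]_u(X_2)\equiv\bar{a}X_2$ (with $a$ a unit by the determinant relation), so the leading pattern is preserved and the image remains in $\mathbf{Z}^{\pi_2}_{1,1,e_1}$.

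For the invariance of the $Z$-coordinate in all three cases, the crucial input is the $\mathcal{O}_F$-alternating property of $\mu_2$ from the theorem in subsection~2.4, which gives $\mu_2(g(X_2),g(Y_2))=(\det g)\mu_2(X_2,Y_2)=\pi_2$; thus the connected component is preserved. Tracking the defining change of variables $1+xy^q=\gamma_1 Z$ and the subsequent substitution leading to $Z_1^q+y^{-(q^2-1)}+y^{q^2-1}=0$ through the $g$-action, and using $\bar{a}\bar{d}=1$ in cases (1) and (3) (which follows from $ad\equiv 1\ (\mathrm{mod}\ \pi)$ together with $\bar{d}^q=\bar{d}$ in $\mathbb{F}_q$), one checks that $Z$ reduces to itself modulo terms of positive valuation. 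The main obstacle, exactly as in the parallel ${\rm SL}_2$-action lemma for $\overline{\mathbf{Y}}^{\pi_2}_{3,1,e_1}$ in subsection~4.3, is the careful bookkeeping of the higher-order contributions from the expansions of $[a]_u,\ldots,[d]_u$ and from the iterated changes of variables; once this is done cleanly in case (1), the remaining two cases follow as minor variants of the same computation.
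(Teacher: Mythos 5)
The paper states this lemma with no proof at all (just as with the parallel ${\rm SL}_2$-action lemma for $\overline{\mathbf{Y}}^{\pi_2}_{3,1,e_1}$ in subsection 4.3), so the only question is whether your argument would constitute one. Your overall strategy is the right one, and the stratum identification is correct and essentially complete: expanding $[e]_u(X)=e_0X+e_1[\pi]_u(X)$ with $[\pi]_u(X)=X^{q^2}+uX^q+\pi X$ and comparing the valuations $v(X_1)=\tfrac{1}{2(q-1)}$, $v(Y_1)=\tfrac{1}{2q(q-1)}$, $v(X_2)=\tfrac{1}{2q^2(q-1)}$, $v(Y_2)=\tfrac{1}{2q^3(q-1)}$ does pin down the dominant term of $g(X_2)=[a]_u(X_2)+[c]_u(Y_2)$ and $g(Y_2)=[b]_u(X_2)+[d]_u(Y_2)$ in each of the three cases, and hence the target stratum and the image of the $y$-coordinate.

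Two things need repair before the coordinate formulas are proved. First, in case (1) you invoke $\bar a\bar d=1$; that is false there, since $\det g=1$ only gives $\bar a\bar d-\bar b\bar c=1$ and $\bar b\bar c$ need not vanish when $c,d$ are both units. The identity actually needed is the full $\bar a\bar d-\bar b\bar c=1$: on the target one computes $\mathcal{Z}''=x''^qy''-x''y''^q$ with $x''\equiv\bar cy$ and $y''\equiv\bar dy$; the leading term cancels because $\bar c^q\bar d-\bar c\bar d^q=0$, and the surviving subleading terms carry the coefficient $\bar a\bar d-\bar b\bar c=1$, which is what reproduces the source datum $1+xy^q=\gamma_1Z$. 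Second, and more importantly, the assertion that $Z$ reduces to itself is the real content of case (1), because the source and target components are uniformized by genuinely different coordinate chains: $1+xy^q=\gamma_1Z$ followed by $Z+y^{-(q^2-1)}+y^{q^2-1}=\gamma_2y^{q^2-1}Z_1$ on $\mathbf{Z}^{\pi_2}_{1,1,e_1}$, versus $\mathcal{Z}=x^qy-xy^q$, $\mathcal{Z}=\gamma^{1/(q-1)}+\gamma^{q/(q-1)}t^{-(q^2-1)}+\mathcal{Z}_1$, $\mathcal{Z}_1=\gamma_1^{(q^2+q-1)/(q-1)}Z/t^{q^2-1}$ on $\mathbf{Z}^{\pi_2}_{1,1,c}$. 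One must check that the normalizing powers of $\alpha$ in the two chains agree; they do, since the relevant exponent $q(q-1)+(q-1)^2/q-(q-1)(q^2+q-1)/q$ vanishes, after which $Z''$ equals $Z_1$ up to a sign and a root of unity depending on the choices of the auxiliary roots $\gamma_i$ (a dependence neither the paper nor your proposal addresses). Your proposal explicitly defers exactly this verification, so as written it establishes where the strata go but not the stated action on $Z$.
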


\section{Analysis of the spaces $\mathcal{X}(\pi)$ and $\mathcal{X}(\pi^2)$}
In this section, we analyze the spaces $\mathcal{X}(\pi)$ and $\mathcal{X}(\pi^2).$
In the stable reduction of the spaces 
$\mathcal{X}(\pi^{i})\ (i=1,2),$ nothing interesting 
happens except for
the reduction of $\mathbf{Y}_{1,1},\mathbf{Y}_{3,1},\mathbf{Y}_{2,2}$
and $\mathbf{Z}_{1,1}.$
More precisely, we prove that
the complements $\mathcal{X}(\pi) \backslash \mathbf{Y}_{1,1}$ and
$\mathcal{X}(\pi^2) \backslash (\mathbf{Z}_{1,1} \cup \mathbf{Y}_{2,2} \cup 
\mathbf{Y}_{3,1})$
are disjoint unions of annuli. We prove it in subsections 5.1 and 5.2.
In other words, we prove that the inverse image 
by $p_2:\mathcal{X}(\pi^2) \longrightarrow \mathcal{X}(1)$
of every circle 
$C[p^{-\alpha}] \subset \mathcal{X}(1)$ with $\alpha \in \bigl((0,\frac{q}{q+1}) \cap 
\mathbb{Q}\bigr) \backslash \{\frac{1}{q+1},\frac{1}{2}\}$ 
 is a disjoint union of circles. 
As a result of the analysis in subsection 5.2,
 we will construct a stable covering
of the wide open space $\mathcal{X}(\pi^2)$
by defining basic wide open subspaces in $\mathcal{X}(\pi^2)$
which contain the spaces $\mathbf{Y}_{3,1},\mathbf{Y}_{2,2}$
and $\mathbf{Z}_{1,1}$ in subsection 5.3.
In subsection 5.4, we give intersection multiplicities 
for stable reduction of the space $\mathcal{X}(\pi^2).$
The notion of wide open spaces and stable coverings
is due to R.\ Coleman,
for example see \cite[Section 2]{CM}.

\subsection{Analysis of the space $\mathcal{X}(\pi)$}
We analyze the space $\mathcal{X}(\pi).$
We prove that the complement $\mathcal{X}(\pi) \backslash \mathbf{Y}_{1,1}$
is a disjoint union of annuli.
Hence, we conclude that the space $\mathcal{X}(\pi)$
is a basic wide open space.
This fact is well-known, but we write down a calculation
for the convenience of a reader.

\begin{lemma}\label{pg}
We consider the following equality
$$[\pi]_u(X)=X^{q^2}+uX^q+\pi X=0.$$
Then, we have the following
\\1.\ If $v(X)>1/(q^2-1),$
we have $v(X)=(1-v(u))/(q-1)$ and $v(u)<q/(q+1).$
\\2.\ If $v(X)=1/(q^2-1),$ we have $v(u) \geq q/(q+1).$
\\3.\ If $v(X)<1/(q^2-1),$ we have $v(X)=v(u)/q(q-1)$ and $v(u)<q/(q+1).$
\end{lemma}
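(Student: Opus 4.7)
The plan is a direct Newton-polygon analysis of the equation $[\pi]_u(X)=X^{q^2}+uX^q+\pi X=0$. Since all $X$ of interest have $v(X)>0$, one may divide by $X$ and study the three monomials $X^{q^2}$, $uX^q$, $\pi X$, with respective valuations $q^2\alpha$, $v(u)+q\alpha$, and $1+\alpha$, where $\alpha:=v(X)$. The ultrametric inequality requires the minimum of these three to be attained at least twice.

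First I compute the three pairwise balances: $q^2\alpha=v(u)+q\alpha$ yields $\alpha=v(u)/(q(q-1))$; $v(u)+q\alpha=1+\alpha$ yields $\alpha=(1-v(u))/(q-1)$; and $q^2\alpha=1+\alpha$ yields $\alpha=1/(q^2-1)$. The critical value of $v(u)$ at which all three formulas coincide to $\alpha=1/(q^2-1)$ is exactly $v(u)=q/(q+1)$, which corresponds to the middle vertex $(q-1,v(u))$ of the Newton polygon of $X^{q^2-1}+uX^{q-1}+\pi$ lying on the segment joining the endpoints $(0,1)$ and $(q^2-1,0)$.

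Then I match each case to one of the three balances by imposing the strict inequality $v(X)>1/(q^2-1)$ or $v(X)<1/(q^2-1)$ together with the dominance of the third term. Case 1 corresponds to the balance $uX^q=\pi X$ with $X^{q^2}$ dominated: the dominance $q^2\alpha>1+\alpha$ gives $\alpha>1/(q^2-1)$, and substituting $\alpha=(1-v(u))/(q-1)$ translates this into $v(u)<q/(q+1)$. Case 3 is symmetric: the balance $X^{q^2}=uX^q$ with $\pi X$ dominated forces $1+\alpha>q^2\alpha$, and substituting $\alpha=v(u)/(q(q-1))$ again gives $v(u)<q/(q+1)$. Case 2 is the complementary regime where both strict inequalities fail, so $v(u)\geq q/(q+1)$ and $\alpha=1/(q^2-1)$ is the only possibility left.

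The only subtle point is the boundary value $v(u)=q/(q+1)$, where all three terms have valuation $q^2/(q^2-1)$; here substituting into the formulas of cases 1 and 3 also returns $\alpha=1/(q^2-1)$, so the trichotomy is unambiguous and no root straddles two cases. Beyond this short boundary check, the argument is expected to be a few lines of elementary valuation bookkeeping with no serious obstacle.
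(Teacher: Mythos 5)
Your proposal is correct, and it is essentially the paper's own argument in different clothing: the paper simply rewrites the equation as $u=-\bigl(X^{q(q-1)}+\pi/X^{q-1}\bigr)$ and reads off $v(u)$ by comparing the two terms, which is exactly your Newton-polygon balance after dividing by $X^q$. All three case computations and the threshold $v(u)=q/(q+1)$ at $v(X)=1/(q^2-1)$ check out, so nothing further is needed.
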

Let $X,Y$ be $\pi$-torsion points of the universal formal group $\mathcal{F}^{\rm univ}$.
\begin{lemma}\label{pg2}
Let $(u,X,Y) \in \mathcal{X}(\pi).$ 
A case 1 for $X$ and $1$ for $Y$ in Lemma \ref{pg}
does not occur.
\end{lemma}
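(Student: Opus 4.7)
\bigskip
\noindent\textbf{Proof plan for Lemma \ref{pg2}.}

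The plan is to show that case~1 for both $X$ and $Y$ would force the Moore determinant $\mu(X,Y)$ to vanish, contradicting the requirement $\mu(X,Y)\neq 0$ built into the definition of the $\mathbf{C}$-points of $\mathcal{X}(\pi)$.

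First I would reduce the torsion equation to a one-variable equation via the substitution $T=X^{q-1}$. Since a nonzero $\pi$-torsion point $X$ satisfies $X^{q^2-1}+uX^{q-1}+\pi=0$, the value $T=X^{q-1}$ is a root of
\begin{equation*}
T^{q+1}+uT+\pi=0.
\end{equation*}
Next I would carry out a Newton-polygon count of the roots in $T$ under the assumption $v(u)<q/(q+1)$ (which is the only regime in which case~1 occurs, by Lemma \ref{pg}). The polygon has vertices at $(0,1),(1,v(u)),(q+1,0)$, which yields exactly one root $T_1$ with $v(T_1)=1-v(u)$ and $q$ further roots with valuation $v(u)/q$. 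Passing back from $T$ to $X$, the first root produces the $q-1$ nonzero torsion points of case~1 and the remaining $q$ roots produce the $q(q-1)$ points of case~3, accounting for all $q^2-1$ nonzero $\pi$-torsion points.

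The key observation is then immediate: all case~1 torsion points share the same value of $X^{q-1}=T_1$, hence any two of them $X,Y$ satisfy $Y=\zeta X$ for some $\zeta\in\mu_{q-1}=\mathbb{F}_q^\times$. I would then compute
\begin{equation*}
\mu(X,Y)=X^qY-XY^q=(\zeta-\zeta^q)X^{q+1}=0,
\end{equation*}
since $\zeta\in\mathbb{F}_q$. This contradicts $\mu(X,Y)\neq 0$, completing the proof.

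The computation itself is routine; the only subtle step is the Newton-polygon counting, and in particular justifying that the $q-1$ points of case~1 actually form a single $\mathbb{F}_q^\times$-orbit (rather than several orbits with the same valuation). This is handled by the uniqueness of the top slope of the Newton polygon of $T^{q+1}+uT+\pi$, which gives exactly one value of $T$ and thus one $\mathbb{F}_q^\times$-orbit of $X$'s. No other technical obstacle appears.
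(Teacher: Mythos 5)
Your proof is correct, but it is a genuinely different argument from the one in the paper. The paper's proof is a one-line valuation estimate: if both $X$ and $Y$ are in case 1, then $v(X)=v(Y)=(1-v(u))/(q-1)$, so $v(X^qY-XY^q)\geq (q+1)(1-v(u))/(q-1)$, and since $v(u)<q/(q+1)$ forces $(q+1)(1-v(u))>1$, this exceeds $v(\pi_1)=1/(q-1)$, contradicting $\mu(X,Y)=\pi_1$. You instead pass to $T=X^{q-1}$, run the Newton polygon of $T^{q+1}+uT+\pi$, and deduce that the case-1 torsion points form a single $\mathbb{F}_q^{\times}$-orbit, so that $\mu(X,Y)=(\zeta-\zeta^q)X^{q+1}$ vanishes identically for $\zeta\in\mathbb{F}_q^{\times}$. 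Your route costs more (the Newton-polygon count and the identification $\mu_{q-1}=\mathbb{F}_q^{\times}$, which is legitimate here since ${\rm char}\,F=p$ and the residue field is $\mathbb{F}_q$), but it buys a stronger and more structural statement: the case-1 points together with $0$ are exactly an $\mathbb{F}_q$-line in $\mathcal{F}^{\rm univ}[\pi]$ (the canonical subgroup on this locus), whereas the paper only shows the valuation of the Moore determinant is too large. Both arguments reach the same contradiction with $\mu(X,Y)\neq 0$; the paper's is the more elementary of the two.
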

\begin{proof}
We consider the equality
$X^qY-XY^q=\pi_1.$
The valuation of the left hand side is larger than $(q+1)(1-v(u))/(q-1).$
By $v(u)<q/(q+1),$ we acquire $(q+1)(1-v(u))/(q-1)>v(\pi_1)=1/(q-1).$
Hence, this case does not happen.
\end{proof}
Let $1 \leq a,b \leq 3$ be positive integers.
Let $\mathbf{W}_{a,b}$ denote a subspace of 
$(u,X,Y) \in \mathcal{X}(\pi)$ 
defined by the conditions
$a$ for $X$ in Lemma \ref{pg} and $b$ for $Y$ in Lemma \ref{pg}.
Note that $\mathbf{W}_{a,b}$ except for $(a,b)=(1,3),(3,1),(2,2),(3,3)$
is empty by Lemmas \ref{pg} and \ref{pg2}.
Furthemore, we note that the space $\mathbf{W}_{2,2}$
is equal to $\mathbf{Y}_{1,1}.$
Recall that, for a subspace $\mathbf{X}$
and $\pi_n \in \mathcal{X}^{\pi_n}(\pi^n)(F_n),$
we write $\mathbf{X}^{\pi_n}$ for the intersection 
$\mathbf{X} \cap \mathcal{X}^{\pi_n}(\pi^n).$
\begin{lemma}\label{annl1}
The spaces $\mathbf{W}^{\pi_1}_{1,3}$ and 
$\mathbf{W}^{\pi_1}_{3,1}$
are annuli of width $1/(q^2-1).$
\end{lemma}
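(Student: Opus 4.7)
The plan is to realize each of $\mathbf{W}^{\pi_1}_{1,3}$ and $\mathbf{W}^{\pi_1}_{3,1}$ as an open annulus by taking the ``non-canonical'' coordinate (i.e.\ the one that falls under case~3 of Lemma \ref{pg}) as the rigid analytic parameter: namely $Y$ for $\mathbf{W}^{\pi_1}_{1,3}$ and $X$ for $\mathbf{W}^{\pi_1}_{3,1}$. The two arguments are entirely parallel, so I sketch only the first.

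First I would pin down the range of $Y$. On $\mathbf{W}^{\pi_1}_{1,3}$, case~3 of Lemma \ref{pg} gives $v(Y) = v(u)/(q(q-1))$ with $v(u) \in (0, q/(q+1))$, so $v(Y)$ sweeps out the open interval $(0, 1/(q^2-1))$. Thus the morphism $(u, X, Y) \mapsto Y$ lands in the open annulus $A_F(p^{-1/(q^2-1)}, 1)$, which by the notation of the paper has width $1/(q^2-1)$. To show this morphism is an isomorphism I would construct an explicit inverse: given $Y$ in this annulus, the equation $[\pi]_u(Y) = Y^{q^2} + uY^q + \pi Y = 0$ determines $u$ by
$$u(Y) = -Y^{q^2-q} - \pi Y^{1-q},$$
with $v(u(Y)) = q(q-1) v(Y) \in (0, q/(q+1))$ by the dominant-term estimate. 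To produce $X$ I would use that, by $\mathbb{F}_q$-bilinearity of $\mu$ together with $\mu(Y, Y) = 0$, the equation $\mu(X, Y) = X^qY - XY^q = \pi_1$ is a separable degree-$q$ polynomial in $X$ whose solution set is an $\mathbb{F}_q$-coset $\{X_0 + \zeta Y : \zeta \in \mathbb{F}_q\}$. Since $v(Y) < 1/(q^2-1)$, at most one element of this coset can have valuation exceeding $1/(q^2-1)$, and exactly one does by the existence of a canonical subgroup generator in the regime $v(u) < q/(q+1)$. Calling it $X(Y)$, the assignment $Y \mapsto (u(Y), X(Y), Y)$ inverts the projection.

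The main point to verify carefully is that the chosen canonical generator $X$ really satisfies $\mu(X, Y) = \pi_1$ on the nose, rather than $\zeta \pi_1$ for some other $\zeta \in \mathbb{F}_q^\times$. This is where the connected-component decomposition $\mathcal{X}(\pi) \times_F F_1 = \coprod_{\pi_1} \mathcal{X}^{\pi_1}(\pi)$ recalled in \S 2.4 enters: the $q-1$ Galois-conjugate components match bijectively the $q-1$ possible normalizations $\zeta \pi_1$ of the Drinfeld basis with $X$ in the canonical subgroup and $Y$ fixed, so restricting to $\mathcal{X}^{\pi_1}(\pi)$ pins down $X$ uniquely. Once this is settled, the explicit formulas above exhibit the projection and its inverse as rigid analytic morphisms, so $\mathbf{W}^{\pi_1}_{1,3}$ is an open annulus of width $1/(q^2-1)$; the argument for $\mathbf{W}^{\pi_1}_{3,1}$ is identical after interchanging the roles of $X$ and $Y$.
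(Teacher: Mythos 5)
Your proposal is correct and takes essentially the same route as the paper: parametrize $\mathbf{W}^{\pi_1}_{1,3}$ by the case-3 coordinate $Y$, observe that $v(Y)=v(u)/(q(q-1))$ sweeps exactly $(0,1/(q^2-1))$, and recover $u$ and $X$ as analytic functions of $Y$. The only difference is bookkeeping: the paper pins down $X$ by reading off the dominant term $XY^q=\pi_1+(\text{higher terms})$ of the Moore equation (equivalently, the Newton polygon of $YX^q-Y^qX-\pi_1$ has a unique root of large valuation), whereas you argue via the $\mathbb{F}_q$-coset structure of the solution set and the canonical subgroup --- both establish the same existence and uniqueness of the case-1 root $X(Y)$.
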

\begin{proof}
We prove the assertion only for the space $\mathbf{W}_{1,3}.$
By $v(u)<q/(q+1),$ we acquire $v(Y)<v(X).$
We have $v(XY^q)=1/(q-1).$
Hence, we acquire the following $XY^q=\pi_1+{\rm higher}\ {\rm terms}.$
Here , the valuation of the higher terms is greater
 than $1/(q-1).$
 Therefore, the required assertion follows.
\end{proof}
\begin{lemma}\label{annl2}
The space $\mathbf{W}^{\pi_1}_{3,3}$
is a disjoint union of $(q-1)$ annuli
with width $1/(q^2-1).$
\end{lemma}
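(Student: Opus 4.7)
The plan is to identify each piece of $\mathbf{W}^{\pi_1}_{3,3}$ with an isomorphic copy of $\mathbf{W}^{\pi_1}_{1,3}$ and then invoke Lemma~\ref{annl1}. First I would use case~3 of Lemma~\ref{pg} on $\mathbf{W}^{\pi_1}_{3,3}$, which gives $v(X)=v(Y)=v(u)/(q(q-1))$ with $0<v(u)<q/(q+1)$ and hence
$$
v(X^qY)=v(XY^q)=\frac{(q+1)v(u)}{q(q-1)}<\frac{1}{q-1}=v(\pi_1).
$$
Factoring $X^qY-XY^q=(X/Y)\bigl((X/Y)^{q-1}-1\bigr)Y^{q+1}$ and noting $\overline{X/Y}\neq 0$, the relation $X^qY-XY^q=\pi_1$ forces $(\overline{X/Y})^{q-1}=1$, so $\overline{X/Y}\in\mu_{q-1}\subset\mathbb{F}_q^{\times}$. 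Since this residue is locally constant, I obtain an open-closed partition
$$
\mathbf{W}^{\pi_1}_{3,3}=\coprod_{\zeta\in\mu_{q-1}}\mathbf{W}^{\pi_1}_{3,3,\zeta}
$$
into at most $(q-1)$ pieces.

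Next I would exploit the equal-characteristic formulas $X+_{\mathcal{F}^{\rm univ}}Y=X+Y$ and $[\zeta]_{\mathcal{F}^{\rm univ}}(Y)=\zeta Y$ for $\zeta\in\mathbb{F}_q$ (subsection~2.2) to observe that $V:=X-\zeta Y$ is itself a $\pi$-torsion point of $\mathcal{F}^{\rm univ}$. On $\mathbf{W}^{\pi_1}_{3,3,\zeta}$ the leading terms of $X$ and $\zeta Y$ cancel, so $v(V)>v(u)/(q(q-1))$; Lemma~\ref{pg} then places $V$ in case~1 with $v(V)=(1-v(u))/(q-1)$. The $\mathbb{F}_q$-bilinearity and alternating property of the Moore determinant (immediate from $(aX+bY)^q=aX^q+bY^q$ in characteristic~$p$ for $a,b\in\mathbb{F}_q$) give $\mu(V,Y)=\mu(X,Y)-\zeta\,\mu(Y,Y)=\pi_1$. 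Therefore the polynomial map $(X,Y)\mapsto(X-\zeta Y,Y)$ yields a morphism
$$
\mathbf{W}^{\pi_1}_{3,3,\zeta}\longrightarrow\mathbf{W}^{\pi_1}_{1,3}
$$
with polynomial inverse $(V',Y')\mapsto(V'+\zeta Y',Y')$ (the image lies in the $\zeta$-piece because $v(V')>v(Y')$ forces $\overline{(V'+\zeta Y')/Y'}=\zeta$), hence a rigid-analytic isomorphism.

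Finally, invoking Lemma~\ref{annl1} would identify each $\mathbf{W}^{\pi_1}_{3,3,\zeta}$ with an annulus of width $1/(q^2-1)$; in particular all $(q-1)$ pieces are nonempty, yielding the claimed disjoint union. The main thing requiring care is the identification of $V=X-\zeta Y$ as a $\pi$-torsion point in case~1 of Lemma~\ref{pg} — this relies crucially on the simplicity of the formal module in equal characteristic — together with the verification that the candidate isomorphism respects the Moore-determinant normalization. Once these two points are in place, the reduction to Lemma~\ref{annl1} is immediate and the width count follows.
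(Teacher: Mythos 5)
Your proposal is correct, and the key substitution is the same as the paper's: the paper also writes $X=\zeta Y+Z$ with $\zeta\in\mathbb{F}_q^{\times}$ and $v(Z)>v(Y)$, which is exactly your $V=X-\zeta Y$. Where you diverge is in what you do next. The paper simply observes that $X^qY-XY^q=\pi_1$ becomes $-Y^qZ=\pi_1+\text{higher terms}$ (the $\zeta Y^{q+1}$ terms cancel because $\zeta^q=\zeta$), reads off $v(Z)=(1-v(u))/(q-1)$, and concludes that $X$ and $Z$ are functions of $Y$, so each piece is the annulus $0<v(Y)<1/(q^2-1)$; it never remarks that $Z$ is itself a $\pi$-torsion point. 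You instead promote the substitution to a change of Drinfeld basis: $V$ is again a root of $[\pi]_u$ by $\mathbb{F}_q$-linearity of $[\pi]_u$ in equal characteristic, the trichotomy of Lemma \ref{pg} (with case 2 excluded by $v(u)<q/(q+1)$ and case 3 excluded by $v(V)>v(u)/(q(q-1))$) forces $V$ into case 1, and the alternating $\mathbb{F}_q$-bilinearity of $\mu$ preserves the normalization $\mu(V,Y)=\pi_1$. This exhibits each $\mathbf{W}^{\pi_1}_{3,3,\zeta}$ as the image of $\mathbf{W}^{\pi_1}_{1,3}$ under the ${\rm SL}_2(\mathbb{F}_q)$-action, so Lemma \ref{annl1} does the rest. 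Your route buys conceptual economy (the computation is done once, in Lemma \ref{annl1}, and nonemptiness of all $q-1$ pieces is automatic) and makes the group-equivariance of the decomposition visible; the paper's route is a shorter self-contained valuation computation. Both give the same width, since in either parametrization $v(Y)$ sweeps out $(0,1/(q^2-1))$.
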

\begin{proof}
Consider the equality $X^qY-XY^q=\pi_1.$
We set $X=\zeta Y+Z$ with $\zeta \in \mathbb{F}^{\times}_q$
and $v(Y)<v(Z).$
Then, the equality $X^qY-XY^q=\pi_1$ has the following form
$-Y^qZ=\pi_1+{\rm higher}\ {\rm terms}$
with $v(Z)=\frac{1-v(u)}{q-1}>v(Y).$
Here the valuation of the higher terms is greater than $1/(q-1).$
Hence, $X$ and $Z$ are written with respect to $Y.$
Thereby, we have proved the required assertion.
\end{proof}
By Lemmas \ref{annl1} and \ref{annl2}, we know
that
the space $\mathcal{X}(\pi)$
is a basic wide open space.
\begin{corollary}
The complement $\mathcal{X}(\pi) \backslash \mathbf{Y}_{1,1}$
is a union of annuli $\mathbf{W}_{1,3} \cup \mathbf{W}_{3,1}
\cup \mathbf{W}_{3,3}.$
In particular, $\mathcal{X}(\pi) \backslash \mathbf{Y}_{1,1}$
is a disjoint union of annuli.
In other words, the space $\mathcal{X}(\pi)$
is a basic wide open space.
\end{corollary}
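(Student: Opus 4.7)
The plan is to combine Lemmas \ref{pg}, \ref{pg2}, \ref{annl1}, and \ref{annl2} via a straightforward case analysis on the valuations of $X$ and $Y$. First I would observe that by Lemma \ref{pg} applied separately to $X$ and to $Y$, the pair of regimes for $(X,Y)$ partitions $\mathcal{X}(\pi)$ into nine subspaces $\mathbf{W}_{a,b}$ with $1 \le a,b \le 3$. The key compatibility condition is that regimes $1$ and $3$ of Lemma \ref{pg} force $v(u) < q/(q+1)$, whereas regime $2$ forces $v(u) \geq q/(q+1)$; hence $\mathbf{W}_{a,b}$ is empty whenever exactly one of $a,b$ equals $2$. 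Together with Lemma \ref{pg2}, which rules out $(a,b)=(1,1)$, only the four pieces $\mathbf{W}_{2,2},\mathbf{W}_{1,3},\mathbf{W}_{3,1},\mathbf{W}_{3,3}$ can be nonempty.

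Next I would identify $\mathbf{W}_{2,2}$ with $\mathbf{Y}_{1,1}$: both are cut out by $v(u) \geq q/(q+1)$ together with $v(X) = v(Y) = 1/(q^2-1)$, so the identification is immediate from the definitions. Since the defining valuation conditions for the four remaining $\mathbf{W}_{a,b}$ are pairwise mutually exclusive, this yields the set-theoretic decomposition
$$\mathcal{X}(\pi) \backslash \mathbf{Y}_{1,1} = \mathbf{W}_{1,3} \sqcup \mathbf{W}_{3,1} \sqcup \mathbf{W}_{3,3},$$
which is the first assertion of the corollary, with disjointness being automatic.

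To conclude, I would plug in the geometric descriptions already obtained. Lemma \ref{annl1} identifies each of $\mathbf{W}^{\pi_1}_{1,3}$ and $\mathbf{W}^{\pi_1}_{3,1}$ with a single annulus of width $1/(q^2-1)$, while Lemma \ref{annl2} realizes $\mathbf{W}^{\pi_1}_{3,3}$ as $q-1$ disjoint annuli of the same width, giving $2 + (q-1) = q+1$ annular ends on each geometrically connected component $\mathcal{X}^{\pi_1}(\pi)$. Summing over the $q-1$ components indexed by $\pi_1 \in \mathcal{X}_{\rm LT}(\pi)(F_1)$ shows that the complement of $\mathbf{Y}_{1,1}$ is a disjoint union of open annuli in $\mathcal{X}(\pi)$. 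This realizes $\mathcal{X}(\pi)$ as a basic wide open in the sense of Coleman--McMurdy \cite[Section 2]{CM}, with underlying affinoid $\mathbf{Y}_{1,1}$ and finitely many disjoint annular ends, proving the final sentence.

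There is essentially no hard obstacle here: the analytic and combinatorial content has already been extracted in the preceding lemmas, and the corollary is a clean bookkeeping assembly. The only point requiring minor care is to verify that the valuation regimes are genuinely incompatible in the mixed cases (resolved by the trichotomy on $v(u)$) and to match $\mathbf{W}_{2,2}$ with $\mathbf{Y}_{1,1}$ on the nose.
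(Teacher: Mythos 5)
Your proposal is correct and follows essentially the same route as the paper: the paper likewise partitions $\mathcal{X}(\pi)$ into the pieces $\mathbf{W}_{a,b}$, observes via Lemmas \ref{pg} and \ref{pg2} that only $(a,b)=(1,3),(3,1),(2,2),(3,3)$ survive with $\mathbf{W}_{2,2}=\mathbf{Y}_{1,1}$, and then deduces the corollary directly from Lemmas \ref{annl1} and \ref{annl2}. Your added justification of the emptiness of the mixed cases (via the incompatible constraints on $v(u)$) and the count of $q+1$ annular ends per component are correct elaborations of what the paper leaves implicit.
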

\begin{proof}
The required assertion follows from
Lemmas \ref{annl1} and \ref{annl2}.
\end{proof}

\subsection{Analysis of the space $\mathcal{X}(\pi^2)$}
In this subsection,
we analyze the space $\mathcal{X}(\pi^2).$
To do so,
we define several subspaces
of $\mathcal{X}(\pi^2)$
and prove that the subspaces
are merely disjoint union of annuli.
Finally, we prove that the complement
$\mathcal{X}(\pi^2) \backslash (\mathbf{Y}_{3,1} \cup \mathbf{Z}_{1,1} \cup \mathbf{Y}_{2,2})$
is a disjoint union of annuli.
Propositions \ref{rop1} and \ref{rop2}
play a key role to
show that a covering $\mathcal{C}(\pi^2),$
which will be constructed in the next subsection,
is actually a stable covering.

Let $(u,X_2,Y_2) \in \mathbf{W}_{1}$ denote a subspace defined by the following conditions;
$$0<v(u)<\frac{1}{q+1},v(X_1)=\frac{1-v(u)}{q-1},v(X_2)=\frac{1-qv(u)}{q(q-1)}, 
v(Y_1)=\frac{v(u)}{q(q-1)},v(Y_2)=\frac{v(u)}{q^3(q-1)}.$$
Let $(u,X_2,Y_2) \in \mathbf{W}_{2}$ denote a subspace defined by the following conditions;
$$0<v(u)<\frac{1}{q+1},v(X_1)=\frac{1-v(u)}{q-1},v(X_2)=\frac{v(u)}{q(q-1)}, 
v(Y_1)=\frac{v(u)}{q(q-1)},v(Y_2)=\frac{v(u)}{q^3(q-1)}.$$
Let $(u,X_2,Y_2) \in \mathbf{W}_3$ denote a subspace defined by the following conditions;
$$\frac{1}{q+1}<v(u)<\frac{q}{q+1},
v(X_1)=\frac{1-v(u)}{q-1},v(X_2)=\frac{1-v(u)}{q^2(q-1)},
v(Y_1)=\frac{v(u)}{q(q-1)}
,v(Y_2)=\frac{v(u)}{q^3(q-1)}.$$
For $1 \leq i \leq 3,$ let 
$\mathbf{W}^{\vee}_{i} \subset \mathcal{X}(\pi^2)$
be a subspace defined by the following condition;
$(u,X_2,Y_2) \in \mathbf{W}^{\vee}_i$ 
is equivalent to $(u,Y_2,X_2) \in \mathbf{W}_{i}.$ 
Let $\mathbf{U}_1$ (resp.\ $\mathbf{U}_2$ resp.\ $\mathbf{U}_3$)
be a subspace defined by the following conditions;
$$v(X_1)=v(Y_1)=\frac{v(u)}{q(q-1)},
v(X_2)=v(Y_2)=\frac{v(u)}{q^3(q-1)}$$
and $0<v(u)<\frac{1}{q+1}.$ 
(resp.\ $\frac{1}{q+1}<v(u)<\frac{1}{2},$ 
resp.\ $\frac{1}{2}<v(u)<\frac{q}{q+1}.$)
\begin{proposition}\label{rop1}
Let the notation be as above.
Then, we have the followings
\\1.\ The spaces $\mathbf{W}^{\pi_2}_1$ and $\mathbf{W}^{\pi_2,\vee}_{1}$ are annuli of width $1/q^3(q^2-1).$
\\2.\ The spaces $\mathbf{W}^{\pi_2}_2$ and $\mathbf{W}^{\pi_2,\vee}_2$ are disjoint unions
of $(q-1)$ annuli of width $1/q^3(q^2-1).$
\\3.\ The spaces $\mathbf{W}^{\pi_2}_3 \backslash \mathbf{Z}^{\pi_2}_{1,1,e_1}$ 
and $\mathbf{W}^{\pi_2,\vee}_3 \backslash \mathbf{Z}^{\pi_2}_{1,1,e_1^{\vee}}$
are disjoint unions of two annuli of width $1/2q^4(q+1)$.
\end{proposition}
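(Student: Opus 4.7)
The plan is to treat each $\mathbf{W}^{\pi_2}_i$ (and its dual $\mathbf{W}^{\pi_2,\vee}_i$, handled identically via the antisymmetry $\mu_2(X_2,Y_2)=-\mu_2(Y_2,X_2)$) by the method already used in Lemmas~\ref{annl1} and~\ref{annl2}. In each case I would tabulate the valuations of the four monomials $X_1Y_2^q$, $X_1^qY_2$, $X_2^qY_1$, $X_2Y_1^q$ of $\mu_2(X_2,Y_2)=\pi_2$ under the given conditions on $v(X_i),v(Y_i),v(u)$, using the $\pi$-torsion relations $X_1=[\pi]_u(X_2)$, $Y_1=[\pi]_u(Y_2)$, $[\pi]_u(X_1)=[\pi]_u(Y_1)=0$ to express the variables compatibly, and compare with $v(\pi_2)=1/(q(q-1))$ in order to single out a unique dominant monomial or a forced leading cancellation.

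In Case~1 the unique dominant term is $X_2Y_1^q$ of valuation $1/(q(q-1))$; coupled with $Y_1\equiv Y_2^{q^2}$ modulo higher order and with $u$ being determined by $Y_1$ through $Y_1^{q^2-1}+uY_1^{q-1}+\pi=0$, the relation $X_2Y_1^q\equiv\pi_2$ solves for $X_2$ as an analytic function of $Y_2$, so that $Y_2$ is a global parameter whose valuation sweeps $(0,1/(q^3(q^2-1)))$, yielding one annulus of the stated width. In Case~2 both $X_2Y_1^q$ and $X_2^qY_1$ have the common valuation $(q+1)v(u)/(q(q-1))<v(\pi_2)$, while the remaining two are strictly larger; the forced leading cancellation factors as $X_2Y_1(Y_1^{q-1}-X_2^{q-1})\equiv 0$, so that to leading order $X_2/Y_1$ must lie in $\mathbb{F}^{\times}_q$. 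Each of the $(q-1)$ residue classes produces a separate branch, parameterized by $Y_2$ as in Case~1, giving $(q-1)$ disjoint annuli of the same width. In Case~3 the tabulation instead singles out $X_2^qY_1$ as the unique term of valuation $v(\pi_2)$, so $X_2^q\equiv\pi_2/Y_1$, and the uniqueness of the $q$-th root in characteristic $p$ determines $X_2$ (and hence the whole space) globally as a function of $Y_2$. Removing $\mathbf{Z}^{\pi_2}_{1,1,e_1}$ excises the circle $v(u)=1/2$, breaking $v(u)\in(1/(q+1),q/(q+1))$ into two open sub-intervals of length $(q-1)/(2(q+1))$ and producing two annuli. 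The width $1/(2q^4(q+1))$ arises on each sub-annulus after combining the substitutions $X_2=\alpha^q x$, $Y_2=\alpha y$ with $\alpha^{2q^2}=\pi_2$ from subsection~4.6 with the $q$-th root extraction forced by $X_2^q\equiv\pi_2/Y_1$.

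The main obstacle throughout is to control the sub-leading error terms in each approximation tightly enough for a Weierstrass-type argument to guarantee that the chosen parameter bijects with an open annulus; in Case~2 this verification must be carried out separately for each $\bar{\zeta}\in\mathbb{F}^{\times}_q$, and in Case~3 separately on each of the two $v(u)$-sub-intervals, where one must also check that no spurious branching is introduced by the $q$-th root extraction used to solve for $X_2$.
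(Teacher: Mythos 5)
Your treatment of parts 1 and 2 matches the paper's proof: in both cases you isolate the dominant monomial ($X_2Y_1^q$ of valuation $1/q(q-1)=v(\pi_2)$ in case 1, and the leading cancellation $X_2Y_1(X_2^{q-1}-Y_1^{q-1})$ forcing $X_2/Y_1\in\mathbb{F}_q^{\times}$ in case 2), and parameterize each branch by $Y_2$; this is exactly what the paper does.

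Part 3 has a genuine gap. You correctly identify $X_2^qY_1$ as the unique monomial of valuation $v(\pi_2)$, but the conclusion that the unique $q$-th root of $-\pi_2/Y_1$ ``determines $X_2$ (and hence the whole space) globally as a function of $Y_2$'' does not establish that $Y_2$ is a uniformizing parameter of the space, and in fact it cannot be: if $Y_2$ were the parameter, each of the two pieces would have width equal to the range of $v(Y_2)=\frac{v(u)}{q^3(q-1)}$ over the sub-interval of $v(u)$, namely $\frac{1}{q^3(q-1)}\cdot\frac{q-1}{2(q+1)}=\frac{1}{2q^3(q+1)}$, which is $q$ times the asserted width $\frac{1}{2q^4(q+1)}$. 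The point is that the relation $X_2^q\equiv -\pi_2/Y_1$ only pins down $X_2$ up to a purely inseparable ambiguity in the coordinate ring; to see that the space is an annulus and to get the correct width one must produce an actual generator. The paper does this by two successive parameter introductions: first $Z$ with $X_2Y_2^q-X_2^qY_2=-\pi_2^{1/q}+Y_2^{q^2}Z$, giving $Z^q=X_2+\cdots$, and then $Z_1$ defined by $ZY_2=-\pi_2^{1/q^2}+Y_2^qZ_1$ (when $\frac{1}{q+1}<v(u)<\frac12$) or $ZY_2=-\pi_2^{1/q^2}+Z^qZ_1$ (when $\frac12<v(u)<\frac{q}{q+1}$), yielding $Z_1^q=Z+\cdots$ resp.\ $Z_1^q=Y_2+\cdots$ with $Z_1$ the true coordinate, e.g.\ $v(Z_1)=\frac{1-v(u)}{q^4(q-1)}$ in the first sub-case. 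Note also that the dichotomy at $v(u)=1/2$ enters through which of $Y_2^{q^2}Z$ and $Z^{q^2}Y_2$ dominates in this expansion, not merely through excising the circle; and the substitutions $X_2=\alpha^qx$, $Y_2=\alpha y$ with $\alpha^{2q^2}=\pi_2$ that you import from the analysis of $\mathbf{Z}_{1,1,e_1}$ are tied to the locus $v(u)=1/2$ and do not make sense on the two open sub-annuli, so they cannot be used to justify the width there. Your proposal needs this second layer of substitutions to close the argument for part 3.
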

\begin{proof}
We prove the assertion 1.
Note that $v(X_2Y_1^q)=1/q(q-1).$
We consider the equality (\ref{fd1}).
Then, we acquire the following $X_2Y_1^q=\pi_2+{\rm higher}\ {\rm terms}.$
Here, the valuation of the higher terms is greater than $1/q(q-1).$
Hence, the space $\mathbf{W}_1(\mathbf{C})$
is isomorphic to 
$$\{Y_2 \in \mathbf{C}|0<v(Y_2)<\frac{1}{q^3(q^2-1)}\}.$$
Therefore, the required assertion follows.

We prove the assertion 2.
Note that $v(X_2^qY_1)=v(X_2Y_1^q)<1/q(q-1).$
By the equality (\ref{fd1}), the following holds
$X_2=\zeta Y_1+{\rm higher}\ {\rm terms}$ with some $\zeta \in \mathbb{F}^{\times}_q.$
Here, the valuation of the higher terms is greater than $v(X_2).$
Therefore, the space $\mathbf{W}_{2}$ splits to $(q-1)$ components, and
the $\mathbf{C}$-valued point of each component is identified with
$$\{Y_2 \in \mathbf{C}|0<v(Y_2)<\frac{1}{q^3(q^2-1)}\}.$$
Hence, the required assertion follows.

We prove the assertion 3.
Consider the equality (\ref{fd1})
\begin{equation}\label{fd22}
X_1Y_2^q-X_1^qY_2-X_2^qY_1+X_2Y_1^q=\pi_2.
\end{equation}
Note that the valuation of the term
$X_2^qY_1$ is smallest among the terms in the left hand side of the 
equality 
(\ref{fd22}).
Note that
$v(X_2Y_1^q)<v(X_1Y_2^q)$ is equivalent to
$v(u)<1/2.$
Moreover, note that
$v(X_1^qY_2)>v({X_2}^{q}Y_2)-v(Y_2^{q(q^2-1)})$
is equivalent to
$v(u)<\frac{q}{q+1}.$

By (\ref{fd22}) and $[\pi]_u(X_2)=X_1,[\pi]_u(Y_2)=Y_1$, 
we acquire the following equality
\begin{equation}\label{fr1}
X_2^qY_2^{q^2}-X_2^{q^2}Y_2^q=-\pi_2+X_2Y_2^{q^3}+{\rm higher}\ {\rm terms} 
\end{equation}
where the valuation of the higher terms is greater
than
${\rm min}\{(v(X_2)/q)+v(Y_2^{q^2(q+1)}),v(X_2^qY_2^{q^3+1})\}-v(Y_2^q).$
Note that
$(v(X_2)/q)+v(Y_2^{q^2(q+1)})<qv(X_2)+(q^3+1)v(Y_2)$
is equivalent to 
$v(u)<1/2.$
We introduce a new parameter
$Z$ as follows
\begin{equation}\label{fgs1}
X_2Y_2^{q}-X_2^qY_2=-\pi_2^{1/q}+Y_2^{q^2}Z.
\end{equation}
Substituting this to (\ref{fr1}) and dividing it by $Y_2^{q^3},$
we acquire the following
\begin{equation}\label{fr2}
Z^q=X_2+{\rm higher}\ {\rm terms}
\end{equation}
where the valuation of the higher terms is greater than 
${\rm min}\{v(Y_2^{q^2}Z),v(Z^{q^2}Y_2)\}-v(Y_2^q).$
By substituting (\ref{fr2}) to (\ref{fgs1}),
the following equality 
holds
\begin{equation}\label{fr4}
(ZY_2)^q=-\pi_2^{1/q}+Y_2^{q^2}Z+Z^{q^2}Y_2+{\rm higher}\ {\rm terms}
\end{equation}
where the valuation of the higher terms is greater than 
${\rm min}\{v(Y_2^{q^2}Z),v(Z^{q^2}Y_2)\}.$
Note that $v(Y_2^{q^2}Z)>v(Z^{q^2}Y_2)$
is equivalent to 
$v(u)>1/2.$

Now, we consider a case $\frac{1}{q+1}<v(u)<\frac{1}{2}.$
Again, we introduce a new parameter $Z_1$ as follows
$$
ZY_2=-\pi_2^{1/q^2}+Y_2^qZ_1.
$$
Substituting this to (\ref{fr4}) and dividing it by $Y_2^{q^2},$
we obtain the following equality
$Z_1^q=Z+{\rm higher}\ {\rm terms}.$
Here, the valuation of the higher terms is greater than $v(Z).$
Hecnce, the parameters $Z,X_2,Y_2$ are written with respect to $Z_1.$
Note that $v(Z_1)=\frac{1-v(u)}{q^4(q-1)}.$
Hence, the required assertion follows.

Secondly, we consider a case $\frac{1}{2}<v(u)<\frac{q}{q+1}.$
We set $ZY_2=-\pi_2^{1/q^2}+Z^qZ_1.$
Then, substituting this to (\ref{fr4}) and divding it by $Z^{q^2},$
we acquire the following
equality 
$Z_1^q=Y_2+{\rm higher}\ {\rm terms}.$
Here the valuation of the higher terms is greater than $v(Y_2).$
Hence, the required assertion follows.
\end{proof}
\begin{proposition}\label{rop2}
Let the notation be as above.
Then, we have the followings
\\1.\ The space $\mathbf{U}^{\pi_2}_1$ 
is a disjoint union of $q(q-1)$ annuli of width $1/q^3(q^2-1).$ 
\\2.\ The spaces $\mathbf{U}^{\pi_2}_2$ and $\mathbf{U}^{\pi_2}_3$ are disjoint unions 
of $(q-1)$ annuli of width $1/2q^4(q+1)$.
\end{proposition}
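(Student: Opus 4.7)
The plan is to parallel the proof of Proposition \ref{rop1} by exploiting the $\mathcal{O}_F$-alternating property of the Moore determinant $\mu_2$. Starting from \eqref{fd1} under the valuation conditions defining $\mathbf{U}_i$ (namely $v(X_2)=v(Y_2)$ and $v(X_1)=v(Y_1)=q^2 v(X_2)$), a direct computation shows that $X_1Y_2^q$ and $-X_2^q Y_1$ share the smallest valuation $(q+1)v(u)/(q^2(q-1))$, which is strictly less than $v(\pi_2)=1/(q(q-1))$ throughout $v(u)<q/(q+1)$; they must therefore cancel at leading order. Using $X_1\equiv X_2^{q^2}$ and $Y_1\equiv Y_2^{q^2}$ modulo higher-order terms (the $X^{q^2}$ term dominates in $[\pi]_u$ in this regime), the cancellation forces $(X_2/Y_2)^{q(q-1)}\equiv 1$. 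Since $T^{q(q-1)}-1=(T^{q-1}-1)^q$ in characteristic $p$, the ratio $X_2/Y_2$ must approximate some $\zeta\in\mathbb{F}_q^\times$, giving an initial decomposition of $\mathbf{U}_i^{\pi_2}$ indexed by the residue $\zeta$.

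For each such $\zeta$, I would substitute $X_2=\zeta Y_2+Z$ with $v(Z)>v(Y_2)$. The identity $\mu_2(\zeta Y_2,Y_2)=0$ (from $\mathcal{O}_F$-bilinearity and alternation, using $\zeta\in\mathbb{F}_q\subset\mathcal{O}_F$) collapses the equation to $\mu_2(Z,Y_2)=\pi_2$, and the additivity and $\mathbb{F}_q$-linearity of $[\pi]_u$ in equal characteristic give $[\pi]_u(Z)=X_1-\zeta Y_1$ exactly. A Newton polygon analysis of $\mu_2(Z,Y_2)-\pi_2$ as a polynomial of degree $q^3$ in $Z$, restricted to the regime $v(Z)>v(Y_2)$, distinguishes the three cases: for $v(u)<1/(q+1)$ the qualifying roots split into one of valuation $(1-qv(u))/(q(q-1))$ (placing $(u,Z,Y_2)\in\mathbf{W}_1^{\pi_2}$) and $q-1$ of valuation $v(u)/(q(q-1))$ (placing $(u,Z,Y_2)\in\mathbf{W}_2^{\pi_2}$); for $v(u)\in(1/(q+1),1/2)$ or $(1/2,q/(q+1))$, all $q$ qualifying roots share the single valuation $(1-v(u))/(q^2(q-1))$, the $\mathbf{W}_3^{\pi_2}$-valuation from the corresponding subcase of Proposition \ref{rop1}(3).

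Invoking Proposition \ref{rop1} then describes each stratum as an annulus of the stated width. For $\mathbf{U}_1^{\pi_2}$, the $q$ roots per $\zeta$ are Hasse-separated by two distinct valuations, so they give $q$ disjoint annular components (one of $\mathbf{W}_1$-type and $q-1$ of $\mathbf{W}_2$-type, all of width $1/(q^3(q^2-1))$ by parts (1) and (2)), yielding the total count $q(q-1)$. For $\mathbf{U}_2^{\pi_2}$ and $\mathbf{U}_3^{\pi_2}$, the $q$ qualifying roots per $\zeta$ share the single $\mathbf{W}_3^{\pi_2}$-valuation and assemble into a single connected component over the corresponding $\mathbf{W}_3^{\pi_2}$-annulus of width $1/(2q^4(q+1))$, giving $(q-1)$ annuli in total. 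The main obstacle will be the final counting step in the $\mathbf{U}_2,\mathbf{U}_3$ case: one needs to verify that the $q$-fold fiber of the substitution map above each point of $\mathbf{W}_3^{\pi_2}$ is a single Galois orbit (so as not to produce $q$ disjoint copies), which requires a careful monodromy analysis on top of the Newton polygon bookkeeping; once this is established, the conclusion follows formally from Proposition \ref{rop1}.
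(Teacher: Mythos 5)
Your proposal reaches the correct conclusions, but by a genuinely different route from the paper's. The paper proves both assertions by direct computation: it eliminates $X_1,Y_1$ from \eqref{fd1} via $[\pi]_u$, sets $a:=X_2/Y_2$ and $z:=a^q-a$, deduces $\bar a\in\mathbb{F}_q^{\times}$ and the splitting of the additive equation in $z$ for assertion 1, and for assertion 2 uniformizes each stratum by explicit successive substitutions ($Y_2^{q+1}z=\pi_2^{1/q}+Y_2^{q^2-1}\pi_2^{1/q^2}+z_1$, then $Z:=\pi_2^{1/q^3}Y_2^{q^2+q-1}/z_1$, etc.), reading off the widths at the end. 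You instead use the $\mathcal{O}_F$-linearity of $\mu_2$ and the additivity of $[\pi]_u$ in equal characteristic to translate by the unipotent substitution $X_2\mapsto X_2-\zeta Y_2$, which carries the residue-$\zeta$ part of $\mathbf{U}_i^{\pi_2}$ into the spaces $\mathbf{W}_j^{\pi_2}$ of Proposition \ref{rop1} and lets you quote that proposition rather than recompute. This is legitimate and arguably more conceptual: it exhibits $\mathbf{U}_1^{\pi_2}$ as the disjoint union over $\zeta\in\mathbb{F}_q^{\times}$ of copies of $\mathbf{W}_1^{\pi_2}\sqcup\mathbf{W}_2^{\pi_2}$ (hence $(q-1)\cdot q$ annuli of width $1/q^3(q^2-1)$), and $\mathbf{U}_2^{\pi_2},\mathbf{U}_3^{\pi_2}$ as unions over $\zeta$ of the two halves of $\mathbf{W}_3^{\pi_2}\setminus\mathbf{Z}^{\pi_2}_{1,1,e_1}$ (hence $(q-1)$ annuli of width $1/2q^4(q+1)$ each), i.e.\ as group translates of already-analyzed pieces. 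The paper's computation is self-contained but repeats work that your reduction recycles.

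One remark on your flagged ``main obstacle'': it dissolves. The map $(u,X_2,Y_2)\mapsto(u,X_2-\zeta Y_2,Y_2)$ is a bijection onto its image (it is the automorphism of $\mathcal{X}^{\pi_2}(\pi^2)$ attached to a unipotent matrix), not a $q$-to-$1$ cover; the $q$-fold multiplicity you are worried about is that of the projection of $\mathbf{W}_3^{\pi_2}$ onto the $(u,Y_2)$-coordinates, which is internal to $\mathbf{W}_3^{\pi_2}$ and is exactly what Proposition \ref{rop1}.3 already untangles when it shows each half of $\mathbf{W}_3^{\pi_2}\setminus\mathbf{Z}^{\pi_2}_{1,1,e_1}$ is a single annulus. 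So no further monodromy analysis is required: once you check that $Z$ is $\pi^2$-torsion but not $\pi$-torsion (the latter because $\mu_2(Z,Y_2)=\pi_2$ is a generator of ${\rm LT}[\pi^2]$), so that $v(Z)>v(Y_2)$ forces $v([\pi]_uZ)=\frac{1-v(u)}{q-1}$, and that the Newton polygon of $[\pi]_u(Z)=Z_1$ then pins $v(Z)$ to the $\mathbf{W}_1/\mathbf{W}_2$ values when $v(u)<\frac{1}{q+1}$ and to the single $\mathbf{W}_3$ value when $v(u)>\frac{1}{q+1}$, the component and width counts follow formally from Proposition \ref{rop1}.
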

\begin{proof}
We prove the assertion 1.
Recall that we have $0<v(u)<\frac{1}{q+1}$ on $\mathbf{U}_1.$
Consider the equality (\ref{fd1}). 
Since we have $[\pi]_u(X_2)=X_1,[\pi]_u(Y_2)=Y_1,$ we acquire the following
\begin{equation}\label{ool1}
(X_2^qY_2-X_2Y_2^q)^q-(X_2^{q^3}Y_2-X_2Y_2^{q^3})={\rm higher}\ {\rm terms}
\end{equation}
where the valuation of the higher terms
is greater than
$v(Y_2^{q^2(q+1)}).$
Then, we set $a:=X_2/Y_2$ with $v(a)=0$ and $z:=a^q-a.$
By substituting these to the equality (\ref{ool1}),
and dividing it by $Y_2^{q(q+1)},$
 we acquire the following
$$z^{q}-Y_2^{(q-1)(q^2-1)}z={\rm higher}\ {\rm terms}$$
where the valuation of the higher terms
is greater than
$v(Y_2^{q(q^2-1)}).$
Therefore, we find $v(z)=v(Y_2^{q^2-1})>0$ and this equation splits to $q$-equations.
By $v(z)>0$, we acquire $a \in \mathbb{F}^{\times}_q.$
Therefore, the required assertion follows.

We prove the assertion 2.
Recall that we have $\frac{1}{q+1}<v(u)<\frac{q}{q+1},$
$v(X_1)=v(Y_1)=\frac{v(u)}{q(q-1)},$ and 
$v(X_2)=v(Y_2)=\frac{v(u)}{q^3(q-1)}$
on $\mathbf{U}_2 \cup \mathbf{U}_3.$
We consider the equality (\ref{fd1}).
Since we have $[\pi]_u(X_2)=X_1,[\pi]_u(Y_2)=Y_1,$ we obtain the following
equality
\begin{equation}\label{rou1}
(X_2^qY_2-X_2Y_2^q)^q-(X_2^{q^3}Y_2-X_2Y_2^{q^3})=\pi_2+{\rm higher}\ {\rm terms}
\end{equation}
where the valuation of the higher terms is greater than
${\rm min}\{v(Y_2^{(q-1)(q^2-1)}\pi_2),v(Y_2^{(q+1)(q^2-1)}\pi_2^{1/q^2})\}.$
We set $a:=X_2/Y_2$ with $v(a)=0$ and $z:=a^q-a.$
Then, the equality (\ref{rou1})
induces the following 
\begin{equation}\label{rou2}
Y_2^{q(q+1)}z^q-Y_2^{q^3+1}(z^{q^2}+z^q+z)=\pi_2+{\rm higher}\ {\rm terms}
\end{equation}
where the valuation of the higher terms is greater than
${\rm min}\{v(Y_2^{(q-1)(q^2-1)}\pi_2),v(Y_2^{(q+1)(q^2-1)}\pi_2^{1/q^2})\}.$
Note that $v(z)=\frac{1}{q^2(q-1)}-(q+1)v(Y_2)>0.$
We set $Y_2^{q+1}z=\pi_2^{1/q}+Y_2^{q^2-1}\pi_2^{1/q^2}+z_1.$
By substituting this to (\ref{rou2})  
, the equality (\ref{rou2}) has the following form
\begin{equation}\label{rou3}
z_1^q-Y_2^{(q-1)(q^2-1)}\pi_2-Y_2^{(q+1)(q^2-1)}\pi_2^{1/q^2}
-Y_2^{q(q^2-1)}z_1=0+{\rm higher}\ {\rm terms}
\end{equation}
where the valuation of the higher terms is greater than
${\rm min}\{v(Y_2^{(q-1)(q^2-1)}\pi_2),v(Y_2^{(q+1)(q^2-1)}\pi_2^{1/q^2})\}.$
Note that $v(Y_2^{(q-1)(q^2-1)}\pi_2)<v(Y_2^{(q+1)(q^2-1)}\pi_2^{1/q^2})$
is equivalent to $1/2<v(u).$

First, we consider a case $1/(q+1)<v(u)<1/2.$
In this case, we have
$v(z_1^q)=v(Y_2^{(q+1)(q^2-1)}\pi_2^{1/q^2})$
by (\ref{rou3}).
By $v(u)>\frac{1}{q+1},$ we obtain 
$v(z_1^q)<v(Y_2^{q(q^2-1)}z_1).$
Then, By setting $Z:=\frac{\pi_2^{1/q^3}Y_2^{q^2+q-1}}{z_1},$
we acquire the following $Y_2=Z^q+{\rm higher}\ {\rm terms}$
by (\ref{rou3}).
Here, the valuation of the higher terms is greater than
$v(Y_2).$
 Further, we have $a \in \mathbb{F}^{\times}_q.$
Therefore, the required assertion follows.

Secondly, we consider the other case $1/2<v(u)<q/(q+1).$
In this case, we have
$v(z_1^q)=v(Y_2^{(q-1)(q^2-1)}\pi_2)$
by (\ref{rou3}).
By $v(u)>1/2,$
we acquire $v(z_1^q)<v(Y_2^{q(q^2-1)}z_1).$
By setting $Z:=\frac{\pi_2^{1/q}Y_2^{q^2+q-1}}{z_1},$
we obtain $Y_2=Z^q+{\rm higher}\ {\rm terms}$
by (\ref{rou3}). 
Here, the valuation of the higher terms
is greater than $v(Y_2).$ Furthermore, we have $a \in \mathbb{F}^{\times}_q.$
Hence, the required assertion follows.
\end{proof}
\begin{corollary}
The complement $\mathcal{X}(\pi^2) \backslash (\mathbf{Y}_{3,1} \cup \mathbf{Z}_{1,1} \cup
\mathbf{Y}_{2,2})$
is a disjoint union of annuli.
\end{corollary}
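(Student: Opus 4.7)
The plan is to exhibit the complement as a finite disjoint union of the nine pieces whose geometries are controlled by Propositions~\ref{rop1} and \ref{rop2}, namely
$$\mathcal{X}(\pi^2) \setminus (\mathbf{Y}_{3,1} \cup \mathbf{Z}_{1,1} \cup \mathbf{Y}_{2,2}) \;=\; \mathbf{W}_1 \sqcup \mathbf{W}_1^{\vee} \sqcup \mathbf{W}_2 \sqcup \mathbf{W}_2^{\vee} \sqcup (\mathbf{W}_3 \setminus \mathbf{Z}_{1,1,e_1}) \sqcup (\mathbf{W}_3^{\vee} \setminus \mathbf{Z}_{1,1,e_1^{\vee}}) \sqcup \mathbf{U}_1 \sqcup \mathbf{U}_2 \sqcup \mathbf{U}_3.$$
Once this identification is in hand, each factor is a finite disjoint union of annuli by the two propositions, and pairwise disjointness of the factors will be automatic since they are cut out by incompatible valuation conditions on $v(u), v(X_1), v(Y_1)$.

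To establish the decomposition, I would first observe that a point $(u,X_2,Y_2)$ of the complement satisfies $v(u)\in (0,q/(q+1))\setminus\{1/(q+1),1/2\}$, because the three excised loci are precisely the preimages under $p_2$ of the closed disk $B[p^{-q/(q+1)}]$ and the two circles at $v(u)=1/(q+1)$ and $v(u)=1/2$. Next, applying Lemma~\ref{pg} to $X_1=[\pi]_u(X_2)$ and to $Y_1=[\pi]_u(Y_2)$ forces each of $v(X_1), v(Y_1)$ to equal either $(1-v(u))/(q-1)$ (case~1) or $v(u)/(q(q-1))$ (case~3). A level-$\pi^2$ version of Lemma~\ref{pg2}, applied to the Moore-determinant equation $\mu_2(X_2,Y_2)=\pi_2$, should eliminate the ``(case~1, case~1)'' combination for $(X_1,Y_1)$ whenever $v(u)<q/(q+1)$, by comparing $v(\pi_2)$ against the Newton-polygon estimate for the left-hand side.

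With $v(X_1), v(Y_1)$ constrained to at most three patterns according as $v(u)<1/(q+1)$, $1/(q+1)<v(u)<1/2$, or $1/2<v(u)<q/(q+1)$, I would then read off $v(X_2), v(Y_2)$ from Newton polygons of $[\pi]_u(X_2)=X_1$ and $[\pi]_u(Y_2)=Y_1$: the unique slope contributions give $v(X_2)\in\{(1-qv(u))/(q(q-1)), v(u)/(q^3(q-1))\}$ (and similarly for $Y_2$). Matching the resulting finite list of valuation patterns against the definitions of $\mathbf{W}_i,\mathbf{W}_i^{\vee},\mathbf{U}_i$ should show that every point of the complement lies in exactly one of the nine pieces above.

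The main obstacle is the exhaustive case analysis: the combinatorics of possible Newton slopes for $(X_1,X_2,Y_1,Y_2)$ is larger than at level one, and some potentially arising patterns (e.g.\ $v(X_1)=(1-v(u))/(q-1)$ together with $v(X_2)=v(u)/(q^3(q-1))$ when $v(u)<1/(q+1)$) need to be ruled out by carefully comparing the term $X_2^q Y_1$ against the other three terms of $\mu_2(X_2,Y_2)=\pi_2$. This is in the spirit of Lemma~\ref{pg2} but is sensitive to which interval contains $v(u)$, and will have to be done separately in each of the three open ranges. Once the case analysis is complete and the decomposition is checked, the corollary follows immediately from Propositions~\ref{rop1} and \ref{rop2}.
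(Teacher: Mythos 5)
Your proposal is correct and takes essentially the same route as the paper: the complement is identified with the union of the pieces $\mathbf{W}_i$, $\mathbf{W}_i^{\vee}$, $\mathbf{W}_3\backslash \mathbf{Z}_{1,1,e_1}$ (and its dual) and $\mathbf{U}_i$, and the statement is then read off from Propositions \ref{rop1} and \ref{rop2}. You are in fact more explicit than the paper, whose proof of the corollary leaves the exhaustiveness of this decomposition implicit; the Newton-polygon case analysis you outline (together with the level-one exclusion of the ``case 1, case 1'' pattern via $\mu(X_1,Y_1)=\pi_1$) is exactly the verification needed to justify it.
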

\begin{proof}
The required assetion follows from
Propositions \ref{rop1} and \ref{rop2}.
\end{proof}

\subsection{Stable covering of $\mathcal{X}(\pi^2)$}
In this subsection,
we construct the stable covering of the wide open
space $\mathcal{X}(\pi^2).$
The space $\mathcal{X}(\pi^2)$
is not basic wide open.
We construct a covering
$\{\mathbf{V}_i\}_{i \in I}$
 of $\mathcal{X}(\pi^2)$ with
 the piece
 $\mathbf{V}_{i}$
 a basic wide open space.
 Namely, we prove that all intersections $\mathbf{V}_i \cap \mathbf{V}_j \neq \phi$
 are annuli.
 We prove that, if $i,j,k \in I$ are different from each other,
 the intersection
 $\mathbf{V}_i \cap \mathbf{V}_j \cap \mathbf{V}_k$
 is empty.
 Furthermore, we show that 
 the space $\mathbf{V}_i$ contains an underlying affinoid $Z_{\mathbf{V}_i}$ 
 and the complement
 $\mathbf{V}_i \backslash Z_{\mathbf{V}_i}$
 is equal to a disjoint union of annuli
 $\bigcup_{j \neq i}(\mathbf{V}_i \cap \mathbf{V}_j).$
Similar constructions of stable coverings
of modular curves
are found in [Section 9]\cite{CM} and \cite[subsection 5.2]{T}.
See \cite[Section 2]{CM} or \cite[section 2.3]{W3}
 for (basic) wide open spaces and stable coverings.

We define several subspaces of $\mathcal{X}(\pi^2).$
Let $(u,X_2,Y_2) \in \mathcal{X}(\pi^2).$
Let $\mathbf{V}$
be a subspace defined by the following
condition $v(u)>1/2.$
This space contains $\mathbf{Y}_{2,2}.$ 
Let $\mathcal{T}$
be the set of the singular residue classes in $\mathbf{Y}_{2,2}.$
For $T \in \mathcal{T},$ let $\mathbf{X}_T \subset T$
be the underlying affinoid.
We set
$${\mathbf{V}}_{1}:=\mathbf{V} \backslash \bigcup_{T \in \mathcal{T}}\mathbf{X}_T.$$

In this subsection, we write $\mathbf{V}'_{e_1}$
for
$\mathbf{W}_3$ in the previous subsection.
This space contains the space $\mathbf{Z}_{1,1,e_1}.$ 
Let $\mathcal{S}_{e_1}$
be the set of the singular residue classes in $\mathbf{Z}_{1,1,e_1}.$
For $S \in \mathcal{S}_{e_1},$ let $\mathbf{X}_S \subset S_{e_1}$
be the underlying affinoid.
We put
$$\mathbf{V}_{2,e_1}:=\mathbf{V}'_{e_1} \backslash \bigcup_{S \in \mathcal{S}_{e_1}} \mathbf{X}_S.$$

Let $(u,X_2,Y_2) \in \mathbf{V}_{3,e_1}$ be a subspace
defined by the following conditions;
$$0<v(u)<1/2,v(X_1)=\frac{1-v(u)}{q-1},v(Y_1)=\frac{v(u)}{q(q-1)}.$$
Then, the space $\mathbf{V}_{3,e_1}$ contains the space
$\mathbf{Y}_{3,1e_1}.$
Furthermore, for $i=2,3,$ let $\mathbf{V}_{i,e_1^{\vee}}$
be a subspace defined by the following condition;
$(u,X_2,Y_2) \in \mathbf{V}_{i,e_1^{\vee}}$
is equivalent to
$(u,Y_2,X_2) \in \mathbf{V}_{i,e_1}.$

Let $(u,X_2,Y_2) \in \mathbf{V}_{3,c}$\ (resp.\ $\mathbf{V}'_{2,c}$)
be a subspace defined by the following conditions;
$$v(X_1)=v(Y_1)=\frac{v(u)}{q(q-1)},v(X_2)=v(Y_2)=\frac{v(u)}{q^3(q-1)}.$$
 and $0<v(u)<1/2.$ (resp.\ $\frac{1}{q+1}<v(u)<\frac{q}{q+1}.$) 
Let $\mathcal{S}_c$
be a set of the singular residue classes of the space
$\mathbf{Z}_{1,1,c}.$
For $S \in \mathcal{S}_c,$
let $\mathbf{X}_S \subset S$ denote the underlying affinoid subdomain.
Then, we set
$$\mathbf{V}_{2,c}:=\mathbf{V}'_{2,c} \backslash \bigcup_{S \in \mathcal{S}_c}\mathbf{X}_S.$$

Let $\mathcal{C}(\pi^2)$
be a covering of $\mathcal{X}(\pi^2)$ consists of
$$\{\mathbf{V}_{1},\mathbf{V}_{i,e_1^{\vee}},\mathbf{V}_{i,e_1},\mathbf{V}_{i,c}\}_{i=2,3} \cup
\{\mathbf{X}_T,\mathbf{X}_{S_1},\mathbf{X}_{S_2}\}_{T \in \mathcal{T},S_1 \in \mathcal{S}_{e_1},
S_2 \in \mathcal{S}_c}.$$
\begin{proposition}
Let the notation be as above.
Then, the covering $\mathcal{C}(\pi^2)$
is a stable covering of $\mathcal{X}(\pi^2).$
\end{proposition}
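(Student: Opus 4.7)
The plan is to verify the three defining properties of a stable covering in turn: (i) the pieces cover $\mathcal{X}(\pi^2)$; (ii) every pairwise intersection is either empty or an annulus, with all triple intersections empty; and (iii) each piece $\mathbf{V}$ is basic wide open, i.e.\ contains an underlying affinoid $Z_{\mathbf{V}}$ whose complement in $\mathbf{V}$ is precisely the disjoint union of the pairwise intersections with the other pieces.

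For (i), the key input is the valuation trichotomy from Lemma \ref{pg} together with its two-level analogue implicit in the definitions of the subspaces $\mathbf{W}_i, \mathbf{W}_i^\vee, \mathbf{U}_i$ of subsection 5.2. First I would check that for every $(u,X_2,Y_2)\in\mathcal{X}(\pi^2)$ the valuations $v(u), v(X_i), v(Y_i)$ fall into one of the regimes describing the sets $\mathbf{V}_1, \mathbf{V}_{i,e_1}, \mathbf{V}_{i,e_1^\vee}, \mathbf{V}_{i,c}$ (for $i=2,3$), or land in one of the singular-residue-class affinoids $\mathbf{X}_T,\mathbf{X}_{S_1},\mathbf{X}_{S_2}$. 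This is essentially a bookkeeping of the cases analyzed in Propositions \ref{rop1} and \ref{rop2}, together with the ``too supersingular'' region $v(u)\ge q/(q+1)$ which lies in $\mathbf{V}_1$ and the region $v(u)=0$ which is excluded.

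For (ii) and (iii), I would match each piece with its ``removed'' annular boundary. For $\mathbf{V}_1$, the underlying affinoid is $\mathbf{Y}_{2,2}$ with the singular classes deleted, and $\mathbf{V}_1\setminus Z_{\mathbf{V}_1}$ is by construction the locus where $v(u)\in(1/2,q/(q+1))$, which by Proposition \ref{rop1}(3) splits into the annuli $\mathbf{W}_3\setminus\mathbf{Z}_{1,1,e_1}$ and $\mathbf{W}_3^\vee\setminus\mathbf{Z}_{1,1,e_1^\vee}$, plus the annular boundaries of the $\mathbf{X}_T$ (the singular residue classes of $\mathbf{Y}_{2,2}$, each homeomorphic to an open disk minus an affinoid, so their boundary is an annulus). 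Each such annulus coincides with $\mathbf{V}_1\cap\mathbf{V}_{2,e_1}$, $\mathbf{V}_1\cap\mathbf{V}_{2,e_1^\vee}$ or $\mathbf{V}_1\cap\mathbf{X}_T$, respectively. The analogous identifications work for the pieces $\mathbf{V}_{2,?}$ (where the boundary regions come from Proposition \ref{rop1}(3) at $v(u)=1/2$ and from Proposition \ref{rop2}(2) at $v(u)=1/(q+1)$, together with the annular collars of the affinoids $\mathbf{X}_{S}$) and for $\mathbf{V}_{3,?}$ (where the boundaries come from Propositions \ref{rop1}(1)(2) and \ref{rop2}(1) at $v(u)=1/(q+1)$, and at $v(u)\to 0$ the annuli collapse into the boundary of $\mathcal{X}(\pi^2)$, as expected for a wide open space).

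The main technical obstacle, and the step I would spend most effort on, is the compatibility at the transition valuations $v(u)=1/(q+1)$ and $v(u)=1/2$: one has to verify that the annular collars of two adjacent pieces are literally the same rigid subspace (same variables, same widths), not merely abstractly isomorphic annuli. This forces a careful rewriting of the parameters $Z, Z_1$ introduced in the two sides of Proposition \ref{rop1}(3) and the inspection showing that, at the transition, both cases produce an annulus of matching width $1/(2q^4(q+1))$ around the same circle in $\mathcal{X}(1)$. Once these width computations agree and each singular affinoid $\mathbf{X}_T, \mathbf{X}_S$ is seen to meet its ambient piece in a single annulus (by the very form of the equations $a^q-a=t^{q+1}$ and $a^q-a=s^2$ obtained in subsections 4.5 and 4.7), triple intersection emptiness is automatic: at any point at most one of the regimes of valuations holds, so no point of $\mathcal{X}(\pi^2)$ can lie in three different pieces of $\mathcal{C}(\pi^2)$.
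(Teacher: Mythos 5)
Your proposal is correct and follows essentially the same route as the paper: the whole argument reduces to identifying each pairwise intersection (and each complement $\mathbf{V}_i\backslash Z_{\mathbf{V}_i}$) with one of the annular loci $\mathbf{W}_3^{(\vee)}\backslash\mathbf{Z}_{1,1,e_1^{(\vee)}}$, $\mathbf{U}_2$, $\mathbf{U}_3$, etc., already shown to be disjoint unions of annuli in Propositions \ref{rop1} and \ref{rop2}. One small bookkeeping point: the locus $\mathbf{V}_1\backslash Z_{\mathbf{V}_1}$ also contains the ``central'' annuli $\mathbf{U}_3=\mathbf{V}_{2,c}\cap\mathbf{V}_1$, and only the $v(u)>1/2$ halves of $\mathbf{W}_3^{(\vee)}\backslash\mathbf{Z}_{1,1,e_1^{(\vee)}}$, but this does not affect the argument.
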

\begin{proof}
Note that
the disjoint union
$(\mathbf{V}_1 \cap \mathbf{V}_{2,e_1^{(\vee)}}) \cup 
(\mathbf{V}_{2,e_1^{(\vee)}} \cap \mathbf{V}_{3,1,e_1^{(\vee)}})$ is 
equal to
$\mathbf{W}_{3}^{(\vee)} \backslash \mathbf{Z}_{1,1,e_1^{(\vee)}}.$
Hence,
the intersections
$\mathbf{V}_1 \cap \mathbf{V}_{2,e_1^{(\vee)}}$
and $\mathbf{V}_{2,e_1^{(\vee)}} \cap \mathbf{V}_{3,1,e_1^{(\vee)}}$
are disjoint unions of annuli by Proposition \ref{rop1}.3.
The intersection $\mathbf{V}_{2,c} \cap \mathbf{V}_1$
is equal to $\mathbf{U}_3$ and, hence
the intersection is a disjoint union of annuli by Proposition \ref{rop2}.2. 
The intersection $\mathbf{V}_{2,c} \cap \mathbf{V}_{3,c}$
is equal to $\mathbf{U}_2$ and, hence
is a disjoint union of annuli by Proposition \ref{rop2}.2.
The complements $\mathbf{V}_{3,e_1^{(\vee)}} 
\backslash \mathbf{Y}_{3,1,e_1^{(\vee)}}$
and $\mathbf{V}_{3,c} \backslash \mathbf{Y}_{3,1,c}$
are disjoint union of annuli by Propositons \ref{rop1}.1,2 and \ref{rop2}.1.
Hence, the required assertion follows.
\end{proof}

We explain a shape of the stable reduction of the Lubin-Tate space 
$\mathcal{X}(\pi^2)$ as already mentioned in Introduction.
Let $\overline{\mathbf{Y}}^c_{2,2}$ 
be the projective completion of the affine curve
 $\overline{\mathbf{Y}}^{\pi_2}_{2,2}.$
 Then, the complement 
 $\overline{\mathbf{Y}}^c_{2,2} \backslash 
 \overline{\mathbf{Y}}^{\pi_2}_{2,2}$
 consist of $(q+1)$ closed points.
The projective curve $\overline{\mathbf{Y}}^c_{2,2}$
meets the projective completion 
$\{\overline{\mathbf{Z}}^c_{1,1,\zeta}\}$
of $(q+1)$ 
affine curves $\{\overline{\mathbf{Z}}^{\pi_2}_{1,1,\zeta}\}$
at each infinity.
The complement $\overline{\mathbf{Z}}^c_{1,1,\zeta}
\backslash \overline{\mathbf{Z}}^{\pi_2}_{1,1,\zeta}$
consists of two closed points.
The projective curve
$\overline{\mathbf{Z}}^c_{1,1,\zeta}$
meets the projective completion 
$\overline{\mathbf{Y}}^c_{3,1,\zeta}$
of $(q+1)$ affine curves $\overline{\mathbf{Y}}_{3,1,\zeta}$
at each infinity.
The curve $\overline{\mathbf{Y}}^c_{3,1,\zeta}$
meets the Igusa curve ${\rm Ig}(p^2)$ at each infinity.
Since the affine curve $\overline{\mathbf{Y}}^{\pi_2}_{3,1,\zeta}$
has $(q+1)$ infinity points,
there exist $q(q+1)$ Igusa curves ${\rm Ig}(p^2)$
in the stable reduction of the Lubin-Tate space $\mathcal{X}^{\pi_2}(\pi^2).$

\subsection{Intersection Data}
We include the intersection multiplicities 
in $\mathcal{X}(\pi^2)$
below in Table 1.
These numbers have been obtained via a rigid analytic reformulation.
Let $C$ be a projective smooth curve over a non-archimedean local field
$F.$ We assume that $C$ admits a semi-stable model $\mathcal{C}$ 
over some extension $E/F.$ 
Suppose that $X$ and $Y$ are irreducible components
of $\mathcal{C},$
and that they intersect in an ordinary double point $P.$
Then ${\rm red}^{-1}(P)$
is an annulus, say with width $w(P).$
Let $e_p(E)$ denote the ramification index of $E/F.$
In this case, the intersection multiplicity
of $X$ and $Y$ at $P$ can be found
by
$$M_E(P):=e_p(E) \cdot w(P).$$
Note that while intersection multiplicity depends on $E,$
the width makes sense even over $\mathbf{C},$
which in some sense makes width a more natural invariant from
the purely geometric perspective as mentioned in
\cite[Section 9.1]{CM}.
Now, for our calculation of $M_E(P)$ on $\mathcal{X}(\pi),$
we take $e_p(E)=(q^2-1).$
Then, we have $({\rm Ig}(p),\overline{\mathbf{Y}}^c_{1,1})=1.$
Now, for our calculation of $M_E(P)$ on $\mathcal{X}(\pi^2),$
we take $e_p(E)=q^4(q^2-1).$

\begin{minipage}{.4\textwidth}
\begin{tabular}{|c|c|c|c|c|c|}
\hline  $P$ & 
\scriptsize{$({\rm Ig}(p^2),\overline{\mathbf{Y}}^c_{3,1,e_1})$} & 
\scriptsize{$(\overline{\mathbf{Y}}^c_{3,1,e_1},\overline{\mathbf{Z}}^c_{1,1,e_1})$} & 
\scriptsize{$(\overline{\mathbf{Z}}^c_{1,1,e_1},\overline{\mathbf{Y}}^c_{2,2})$}  &
\scriptsize{$(\overline{\mathbf{Z}}^c_{1,1,e_1},\mathbf{X}_S)$}&
\scriptsize{$(\overline{\mathbf{Y}}^c_{2,2},\mathbf{X}_{T})$}
\\
\hline 
\rule[-13pt]{0pt}{31pt}
$w(P)$ & $\frac{1}{q^3(q^2-1)}$ & $\frac{1}{2q^4(q+1)}$ & 
$\frac{1}{2q^4(q+1)}$ & $\frac{1}{4q^4}$ & $\frac{1}{q^3(p+1)}$
\\
\hline \rule[-13pt]{0pt}{31pt} $M_E(P)$ & 
$q$ & 
$\frac{q-1}{2}$ &  
$\frac{q-1}{2}$ &
$\frac{q^2-1}{4}$ &  $q(q-1)$ \\ 
\hline 
\end{tabular}
\end{minipage}

\begin{center}
Table 1: Intersection Multiplicity Data for $\mathcal{X}(\pi^2)$ 
\end{center}

\end{document}